\newcommand{\R}{\textnormal{I\kern-0.21emR}}
\newcommand{\N}{\textnormal{I\kern-0.21emN}}
\newcommand{\C}{\mathbb{C}}
\newcommand{\U}{\mathcal{U}_{L,M}}
\newcommand{\Ub}{\overline{\mathcal{U}}_{L,M}}
\newcommand{\Hn}{\mathcal{H}^{n-1}}
\newcommand{\A}{\mathcal{A}}
\newcommand{\D}{\mathcal{D}}
\newcommand{\Ck}{\operatorname{Diff}^\alpha}
\renewcommand{\geq}{\geqslant}
\renewcommand{\leq}{\leqslant}
\renewcommand{\P}{\mathcal{P}_N}
\newenvironment{customthm}[1]
{\innercustomthm}
{\endinnercustomthm}
\newtheorem{theorem}{Theorem}  
\newtheorem{proposition}{Proposition}
\newtheorem{corollary}{Corollary}
\newtheorem{definition}{Definition}
\newtheorem{lemma}{Lemma}
\theoremstyle{definition}\newtheorem{remark}{Remark}
\title{\bf Spectral shape optimization for the Neumann traces of the Dirichlet-Laplacian eigenfunctions}
\author{
	Yannick Privat\footnote{IRMA, Universit\'e de Strasbourg, CNRS UMR 7501, 7 rue Ren\'e Descartes, 67084 Strasbourg, France ({\tt yannick.privat@unistra.fr}).}
	\and  Emmanuel Tr\'elat\footnote{Sorbonne Universit\'es, UPMC Univ Paris 06, CNRS UMR 7598, Laboratoire Jacques-Louis Lions, Institut Universitaire de France, F-75005, Paris, France ({\tt emmanuel.trelat@upmc.fr}).}
   \and Enrique Zuazua\footnote{DeustoTech, University of Deusto, 48007 Bilbao, Basque Country, Spain. Departamento de Matematicas, Universidad Autonoma de Madrid, 28049 Madrid, Spain. Facultad Ingeniera, Universidad de Deusto, Avda. Universidades, 24, 48007, - Basque Country, Spain. Sorbonne Universit\'es, UPMC Univ Paris 06, CNRS UMR 7598, Laboratoire Jacques-Louis Lions, F-75005, Paris, France. (\texttt{enrique.zuazua@deusto.es}).}}
\date{}
\begin{document}

\maketitle

\begin{abstract}
We consider a spectral optimal design problem involving the Neumann traces of the Dirichlet-Laplacian eigenfunctions on a smooth bounded open subset $\Omega$ of $\R^n$. The cost functional measures the amount of energy that Dirichlet eigenfunctions concentrate on the boundary and that can be recovered with a bounded density function. 

We first prove that, assuming a $L^1$ constraint on densities, the so-called {\it Rellich functions} maximize this functional.

Motivated by several issues in shape optimization or observation theory where it is relevant to deal with bounded densities, and noticing that the $L^\infty$-norm of {\it Rellich functions} may be large, depending on the shape of $\Omega$, we analyze the effect of adding pointwise constraints when maximizing the same functional. We investigate  the optimality of {\it bang-bang} functions and {\it Rellich densities} for this problem. We also deal with similar issues for a close problem, where the cost functional is replaced by a spectral approximations.
Finally, this study is completed by the investigation of particular geometries and is illustrated by several numerical simulations. 

\end{abstract}

\noindent\textbf{Keywords:} wave equation, boundary observability, shape optimization, calculus of variations, spectrum of the laplacian, quantum ergodicity at the boundary.

\medskip

\noindent\textbf{AMS classification:} 35P20, 93B07, 58J51, 49K20.

\tableofcontents

%%%%%%%%%%%%%%%%%%%%%%%%%%%%%%%%%%%%%%%%%%%%%%%%%%%%%%%%%%%%%%%%%%
%%%%%%%%%%%%%%%%%%%%%%%%%%%%%%%%%%%%%%%%%%%%%%%%%%%%%%%%%%%%%%%%%%

\section{Introduction}\label{secintro}

\subsection{Motivation}

This article is devoted to the investigation of spectral problems involving the Neumann traces of the Dirichlet-Laplacian eigenfunctions, having applications in shape sensitivity analysis, observation and control theory. 

Let $\Omega$ be a bounded connected open subset of $\R^n$ with Lipschitz boundary. Consider a Hilbert basis $(\phi_j)_{j\in \N^*}$ of $L^2(\Omega)$, consisting of real-valued eigenfunctions of the Dirichlet-Laplacian operator on $\Omega$, associated with the negative eigenvalues $(-\lambda_j(\Omega))_{j\in\N^*}$. In the whole article, the eigenvalues $\lambda_j(\Omega)$ will also be denoted $\lambda_j$ when there is no need to underline their dependence on $\Omega$.

In what follows, since all the geometrical quantities that will be handled are scale-invariant, we will assume that $\Omega$ satisfies the normalization condition 
\begin{equation}\label{eq:circum}
R(\Omega)=1
\end{equation}
where $R(\Omega)$ denotes the circumradius\footnote{In other words, the smallest radius of balls containing $\Omega$.} of $\Omega$. Obviously, other normalization choices would be possible, but the one we consider allows to slightly simplify the presentation of our results.

The Lipschitz set $\partial\Omega$ is endowed with the $(n-1)$-dimensional Hausdorff measure $\Hn$. In the sequel, measurability of a subset $\Gamma\subset \partial \Omega$ is understood with respect to the measure $\Hn$.
We will use the notation $\chi_\Gamma$ to denote the characteristic function\footnote{The characteristic function $\chi_\Gamma$ of the set $\Gamma$ is the function equal to $1$ in $\Gamma$ and $0$ elsewhere.} of the set $\Gamma$, $\nu$ is the outward unit normal to $\partial\Omega$ and ${\partial f}/{\partial\nu}$ is the normal derivative of a function $f\in H^{2}(\Omega)$ on the boundary $\partial\Omega$.

\medskip

The starting point is the famous Rellich identity\footnote{We also mention \cite{Liu} for a review of Rellich-type identities and their use in free boundary problems theory.}, discovered by Rellich in 1940 \cite{Rellich2}, stating that
\begin{equation}\label{Tao}
\boxed{
\forall x_0\in \R^n, \qquad 2 =\frac{1}{\lambda_j} \int_{\partial\Omega} \langle x-x_0,\nu(x) \rangle \left( \frac{\partial \phi}{\partial \nu}(x)\right)^2\, d\Hn(x)
}
\end{equation}
for every $\mathcal{C}^{1,1}$ or convex bounded domain $\Omega$ of $\R^n$, where $\langle \cdot ,\cdot \rangle$ is the Euclidean scalar product in $\R^n$, and for every eigenfunction $\phi$ of the Dirichlet-Laplacian operator. This identity can be interpreted in terms of the ``boundary Neumann energy'' of eigenfunctions on the boundary of $\Omega$. Indeed, let $x_0\in \R^n$ and set  
\begin{equation}\label{def:ax0}
\text{for a.e. }x\in\partial\Omega, \qquad  a_{x_0}(x) = \langle x-x_0,\nu(x)\rangle,
\end{equation}
the identity \eqref{Tao} states in particular that 
 $$
\forall N\in \N^*, \quad \min_{1\leq j\leq N}\frac{1}{\lambda_j}\int_{ \partial \Omega} a_{x_0}\left(\frac{\partial \phi_j}{\partial \nu}\right)^2 \, d\Hn = 2
 $$
 and moreover, the infimum is reached by every index $j\in \N^*$. Therefore, the function $a_{x_0}$ acts as a perfect spectral mirror. We will refer to {\it Rellich function} for designating functions of the form \eqref{def:ax0}.

\medskip

This leads us to introduce the functional $J_N$ defined by

\begin{equation}\label{defJaNrelax}
\boxed{
J_N(a)=\inf_{1\leq j\leq N}\frac{1}{\lambda_j}\int_{\partial \Omega} a\left(\frac{\partial \phi_j}{\partial \nu}\right)^2 \, d\Hn .
}
\end{equation}
involving the $N$ first modes of the Dirichlet-Laplace operator, as well as its infinite version $J_\infty$ defined by 
\begin{equation}\label{defJarelax}
\boxed{
\forall a\in L^\infty(\partial\Omega),\qquad J_\infty(a)=\inf_{j\in\mathbb{N}^*}\frac{1}{\lambda_j}\int_{ \partial \Omega} a\left(\frac{\partial \phi_j}{\partial \nu}\right)^2 \, d\Hn
}
\end{equation}

\bigskip
Note that each integral $\int_{ \partial\Omega} a\left(\frac{\partial \phi_j}{\partial \nu}\right)^2 \, d\Hn$ in the definition of $J_N$ or $J_\infty$ is well defined and is finite whenever $\Omega$ is convex or has a $\mathcal{C}^{1,1}$ boundary\footnote{Indeed , the outward unit normal $\nu$ is defined almost everywhere, the eigenfunctions $\phi_j$ belong to $H^2(\Omega)$ and their Neumann trace $\partial\phi_j/\partial\nu$ belongs to $L^2(\partial\Omega)$ for any $j\geq 1$. Hence $a\left(\frac{\partial \phi_j}{\partial \nu}\right)^2\in L^1(\partial\Omega)$ for every $a\in L^\infty(\Omega)$.}.
Interpreting $\lambda_j$ as the boundary Neumann energy of the eigenfunction $\phi_j$, it follows that
{the functional $J_\infty$ (resp. $J_N$) {measures the worst amount of Dirichlet eigenfunctions} (resp. the worst amount of the $N$ first Dirichlet eigenfunctions) boundary Neumann energy that can be recovered with the density function $a$. }
From \eqref{Tao} and the considerations above, one has $J_N(a_{x_0})=J_\infty(a_{x_0})=2$. 

Looking for the density functions enjoying optimal spectral properties in terms of the boundary Neumann energy, it appears relevant to maximize either $J_N(a)$ or $J_\infty(a)$. For this last criterion, we will see that Rellich functions are natural candidates for solving this problem.   

The aim of the ongoing study is to quantify this observation and analyze such problems. In particular and as underlined in what follows, these issues are connected with several concrete applications.

\medskip

As a first result, let us show that, in some sense, the density function involved in the Rellich identity is the best possible (for maximizing the criterion $J_\infty$) when considering a $L^1$-type constraint on densities. 

\begin{theorem} \label{prop:metz1756}
Assume that $\Omega$ either is convex or has a $\mathcal{C}^{1,1}$ boundary. Then,
\begin{equation}\label{estimJa}
\forall a\in L^\infty(\partial \Omega),\qquad -\infty < J_\infty (a)\leq \frac{2}{n|\Omega|}\int_{\partial \Omega}a\, d\Hn ,
\end{equation}
and this inequality is an equality for every {\it Rellich function} $a_{x_0}$ defined by \eqref{def:ax0} with $x_0\in \R^n$.
As a consequence, for a given $p_0\in \R$, we have
\begin{equation}\label{maxJapropsssign}
\max \left\{J_\infty (a), a\in L^\infty(\partial\Omega)\ \mid\  \int_{\partial\Omega}a\, d\Hn=p_0\right\}= \frac{2p_0}{n|\Omega|}
\end{equation}
and the maximum is reached by any function $\tilde a_{x_0}$ defined from the Rellich function $a_{x_0}$ on $\partial\Omega$ by 
\begin{equation}\label{def:tildeax0}
\tilde a_{x_0}( x)=  \frac{p_0}{n|\Omega|} \langle x-x_0,\nu(x)\rangle,\qquad \text{with $x_0\in \R^n$.}
\end{equation}
\end{theorem}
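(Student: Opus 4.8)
My plan is to treat the three assertions separately, the only nonelementary ingredient being an averaged equidistribution of the Neumann traces. I would first observe that the equality case is immediate: by the Rellich identity \eqref{Tao}, one has $\frac{1}{\lambda_j}\int_{\partial\Omega}a_{x_0}(\partial\phi_j/\partial\nu)^2\,d\Hn=2$ for \emph{every} index $j$, so $J_\infty(a_{x_0})=2$ straight from \eqref{defJarelax}. On the other hand, since $\operatorname{div}(x-x_0)=n$, the divergence theorem (valid on convex or $\mathcal{C}^{1,1}$ domains) gives $\int_{\partial\Omega}a_{x_0}\,d\Hn=\int_{\partial\Omega}\langle x-x_0,\nu\rangle\,d\Hn=n|\Omega|$. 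Hence $\frac{2}{n|\Omega|}\int_{\partial\Omega}a_{x_0}\,d\Hn=2=J_\infty(a_{x_0})$, the announced equality.

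\textbf{Finiteness.} The key auxiliary estimate is the uniform bound $M:=\sup_{j}\frac{1}{\lambda_j}\int_{\partial\Omega}(\partial\phi_j/\partial\nu)^2\,d\Hn<+\infty$. When $\Omega$ is convex I would take an interior point $x_0$, for which $a_{x_0}(x)=\langle x-x_0,\nu(x)\rangle\geq\operatorname{dist}(x_0,\partial\Omega)=:c_0>0$ for a.e. $x\in\partial\Omega$ (the left-hand side being the distance from $x_0$ to the supporting hyperplane at $x$); then \eqref{Tao} yields $2\lambda_j\geq c_0\int_{\partial\Omega}(\partial\phi_j/\partial\nu)^2\,d\Hn$, i.e. the bound with $M=2/c_0$. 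When $\partial\Omega$ is only $\mathcal{C}^{1,1}$ and possibly nonconvex, no such point need exist, so I would instead invoke the generalized Rellich--Pohozaev identity with a $W^{1,\infty}$ vector field $X$ extending the unit normal (so that $X\cdot\nu\equiv1$ on $\partial\Omega$), whose bulk right-hand side is controlled by $\|\nabla\phi_j\|_{L^2(\Omega)}^2=\lambda_j$; this again gives a uniform $M$. Bounding $a\geq-\|a\|_{L^\infty}$ inside each integral then yields $J_\infty(a)\geq-M\|a\|_{L^\infty}>-\infty$.

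\textbf{Upper bound (main obstacle).} Since $J_\infty(a)$ is an infimum over $j$, it is dominated by every Cesàro mean:
\begin{equation*}
J_\infty(a)\leq\frac1N\sum_{j=1}^N\frac{1}{\lambda_j}\int_{\partial\Omega}a\left(\frac{\partial\phi_j}{\partial\nu}\right)^2d\Hn=\int_{\partial\Omega}a\,\rho_N\,d\Hn,\qquad \rho_N:=\frac1N\sum_{j=1}^N\frac{(\partial\phi_j/\partial\nu)^2}{\lambda_j}.
\end{equation*}
The decisive step, which I expect to be the hardest, is to show that $\rho_N$ converges, weakly (e.g. in $L^1(\partial\Omega)$, so that the pairing with a fixed $a\in L^\infty$ passes to the limit), to a \emph{constant}. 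This is a local Weyl law at the boundary: the leading coefficient in the asymptotics of $\sum_{\lambda_j\leq\lambda}(\partial\phi_j(x)/\partial\nu)^2$ is governed by the isotropic principal symbol and is therefore independent of $x\in\partial\Omega$, which forces the limiting density to be constant; I note that only this Cesàro-averaged statement is needed, not the full quantum ergodicity at the boundary, since an infimum is controlled by any convex combination of the $(\partial\phi_j/\partial\nu)^2/\lambda_j$. The measure-theoretic justification of the passage from continuous to merely bounded $a$ (uniform integrability of $\rho_N$, using the bound $M$) is part of what must be checked here. Granting $\rho_N\rightharpoonup c$, the value of $c$ is pinned down by Rellich: testing against $a_{x_0}$ gives $\int_{\partial\Omega}a_{x_0}\rho_N\,d\Hn=2$ for all $N$ by \eqref{Tao}, whence $c\,n|\Omega|=2$, i.e. $c=\frac{2}{n|\Omega|}$. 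Passing to the limit in the displayed inequality yields $J_\infty(a)\leq\frac{2}{n|\Omega|}\int_{\partial\Omega}a\,d\Hn$, which is \eqref{estimJa}.

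\textbf{Constrained maximization.} Finally I would deduce \eqref{maxJapropsssign}. The constraint $\int_{\partial\Omega}a\,d\Hn=p_0$ together with \eqref{estimJa} gives $J_\infty(a)\leq\frac{2p_0}{n|\Omega|}$ for every admissible $a$, so the right-hand side bounds the supremum. It is attained by $\tilde a_{x_0}$ of \eqref{def:tildeax0}: first $\int_{\partial\Omega}\tilde a_{x_0}\,d\Hn=\frac{p_0}{n|\Omega|}\cdot n|\Omega|=p_0$, so $\tilde a_{x_0}$ is admissible; second, since each term $\frac{1}{\lambda_j}\int_{\partial\Omega}a_{x_0}(\partial\phi_j/\partial\nu)^2\,d\Hn$ equals the \emph{same} value $2$ for all $j$, multiplying by the scalar $\frac{p_0}{n|\Omega|}$ leaves all terms equal (irrespective of its sign), so the infimum is that common value and $J_\infty(\tilde a_{x_0})=\frac{2p_0}{n|\Omega|}$. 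Hence the supremum is a maximum equal to $\frac{2p_0}{n|\Omega|}$, which proves \eqref{maxJapropsssign}.
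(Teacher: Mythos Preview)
Your proposal is correct and follows essentially the same route as the paper: equality case via the Rellich identity and the divergence theorem, upper bound by dominating the infimum with the Ces\`aro means $\rho_N$ and passing to the limit using the averaged boundary Weyl law (the paper cites a specific reference for the uniform boundedness and uniform-on-compacta convergence of $\rho_N$ to a constant, which it then identifies as $2/(n|\Omega|)$ exactly as you do), and the constrained maximization as an immediate corollary.

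The one place you diverge is the finiteness of $J_\infty(a)$. You establish a uniform bound $\sup_j\lambda_j^{-1}\int_{\partial\Omega}(\partial\phi_j/\partial\nu)^2\,d\Hn<\infty$ directly from Rellich (convex case) or a Rellich--Pohozaev identity with a $W^{1,\infty}$ extension of $\nu$ ($\mathcal{C}^{1,1}$ case), and then conclude $J_\infty(a)\geq -M\|a\|_\infty$. The paper instead argues by contradiction from the convergence of the Ces\`aro means. Your argument is more self-contained and arguably cleaner; it also supplies the uniform integrability you flag as needed to pass from continuous to bounded test functions in the limit $\int a\rho_N\to c\int a$.
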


The proof of Theorem \ref{prop:metz1756} is postponed to Section \ref{sec:proofpropOptvalue}.
An important ingredient of the proof is the uniform convergence of the Ces\`aro means of the functions $\frac{1}{\lambda_j}\left(\frac{\partial \phi_j}{\partial \nu}\right)^2$ to a constant as $j\rightarrow+\infty$. 

\begin{figure}[h!]
\begin{center}
\includegraphics[height=3.5cm]{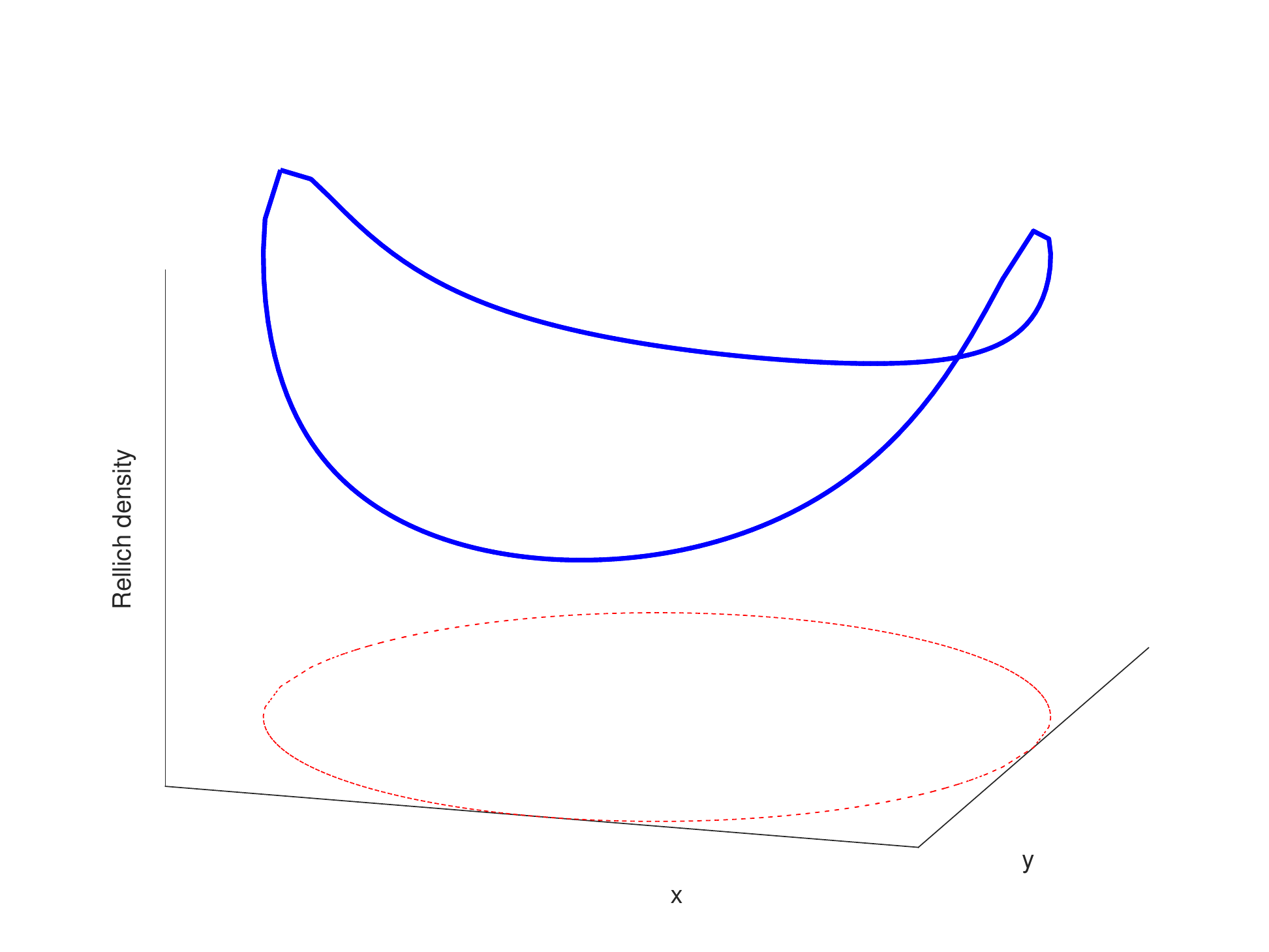}
\includegraphics[height=3.5cm]{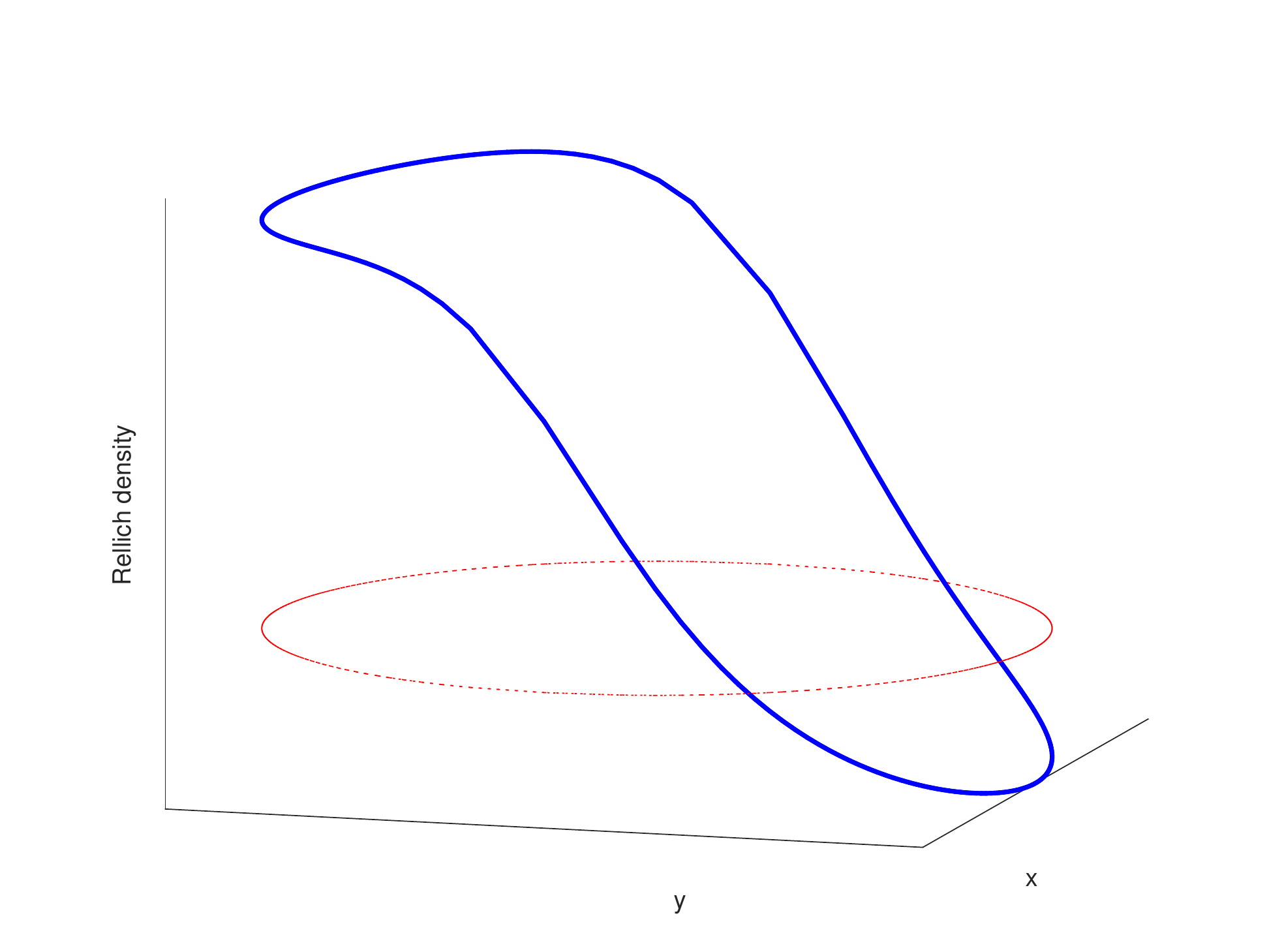}
\includegraphics[height=3.5cm]{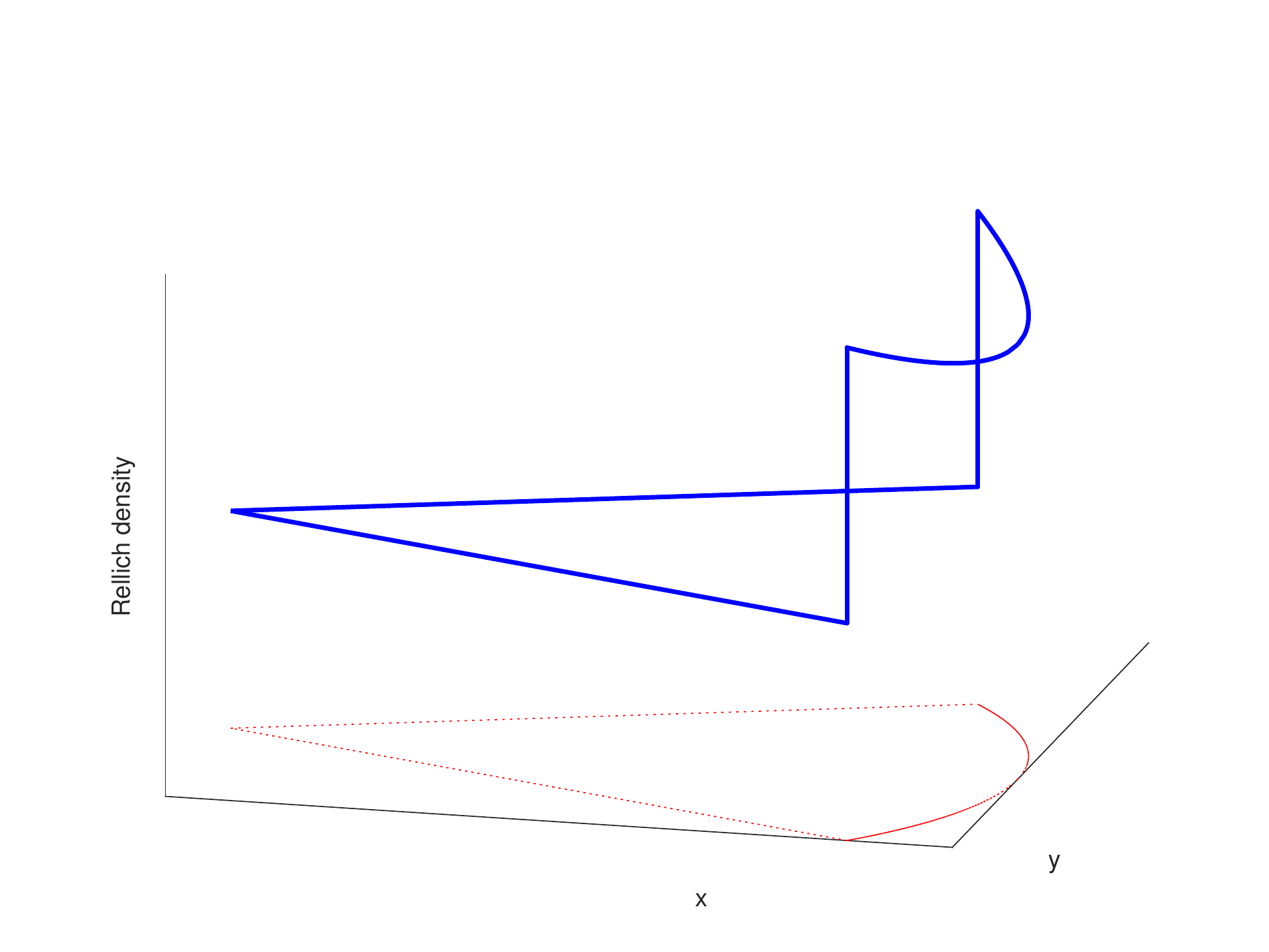}
\caption{Plot of three Rellich densities (continuous line): for an ellipse (dotted line, left and middle) with several locations of $x_0$ and for an angular sector (dotted line, right). The dotted figures are plotted in the plane $\{z=0\}$.  \label{Fig:densiteEllSA}}
\end{center}
\end{figure}

\medskip

The optimal design problems we will deal with are motivated by the following observation: the maximizers of Problem \eqref{maxJapropsssign} given by \eqref{def:tildeax0} may have an arbitrarily large $L^\infty$-norm depending on the choice of the domain $\Omega$. For instance, fix $\varepsilon>0$ arbitrarily small and assume that $\Omega$ is an ellipse in $\R^2$ with circumradius equal to 1. Then, the lowest $L^\infty$-norm of maximizers given by \eqref{def:tildeax0} is equal to $1/\varepsilon$ by choosing a small enough minor semi-axis for $\Omega$ (see Fig. \ref{Fig:Lentille}). This claim will be formalized in Proposition \ref{prop:Linftynormmax} and justifies to consider a modified version of Problem \eqref{maxJapropsssign}, where one assumes that the density function $a(\cdot)$ is uniformly bounded in $L^\infty$ by some positive constant $M$. As will be underlined in the sequel, such a remark also holds for the criterion $J_N$ whenever $N$ is large enough.  

\begin{figure}[h!]
\begin{center}
\includegraphics[height=5cm]{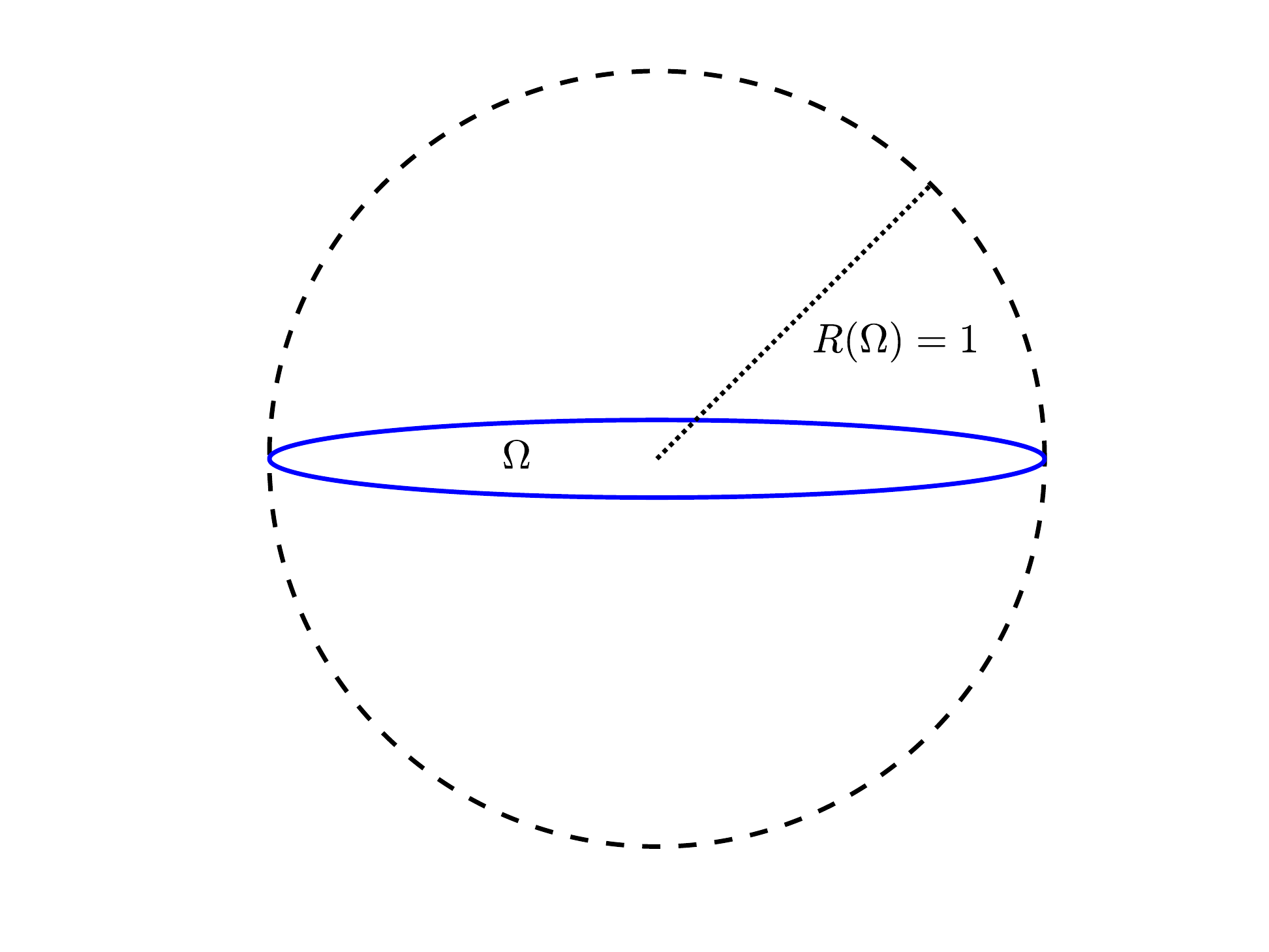}
\caption{Two convex sets with circumradius equal to 1: a lens (solid line) and a disk (dotted line). \label{Fig:Lentille}}
\end{center}
\end{figure}

Let us fix some $L\in(-1,1)$ and $M>0$. Summing-up the previous considerations and noting that for every $a\in L^\infty(\partial\Omega)$ such that $-M\leq a(\cdot)\leq M$, one has 
$$
-M\Hn(\partial\Omega)\leq \int_{\partial \Omega}a \, d\Hn\leq M\Hn(\partial\Omega),
$$
it is relevant to consider the class of density functions
$$
\boxed{\Ub=\left\{a\in L^\infty(\partial\Omega)\ \mid\  -M\leq a\leq M\text{ a.e. in }\partial\Omega\text{ and }\int_{\partial\Omega}a\, d\Hn=LM\Hn(\partial\Omega)\right\}}
$$
i.e., we consider measurable functions $a$ whose $L^\infty$-norm is bounded above by $M$ and that have a prescribed integral. 

Computing the supremum of $J_N(a)$ or $J_\infty(a)$ over $\Ub$ becomes much more difficult since one considers additional pointwise constraints. Indeed, the existence of Rellich functions satisfying at the same time a $L^\infty$ (pointwise upper bound) and a $L^1$ (integral) constraints is much dependent on the geometry of $\Omega$, as will be highlighted in the sequel.

Let $M>0$ and $L\in (-1,1)$. We will then analyze the optimization problem
\begin{equation}\label{truncPbOpt}\tag{$\mathcal{P}_N$}
\textsf{($N$-truncated spectral problem)}\quad \boxed{
\sup_{a\in \Ub} J_N(a)
}
\end{equation}
where $N\in \N^*$ is given, as well as its asymptotic version as $N$ tends to $+\infty$,
\begin{equation}\label{defJrelax}\tag{$\mathcal{P}_\infty$}
\textsf{(full spectral problem)}\quad\boxed{
 \sup_{a\in \Ub} J_\infty(a)
}
\end{equation}

According to Theorem \ref{prop:metz1756}, natural candidates for solving Problem \eqref{defJrelax} are defined from the Rellich functions $a_{x_0}$ as follows.

\begin{definition}[Rellich admissible functions]\label{def:Rellichadmf}
Let $\Omega$ be as previously and $x_0\in \overline{\Omega}$. We will say that the function $\tilde a_{x_0}$ defined by 
\begin{equation}\label{defax0}
\displaystyle \tilde a_{x_0}(x)=\frac{LM\Hn(\partial\Omega)}{n|\Omega|}\langle x-x_0,\nu(x)\rangle,
\end{equation}
for a.e. $x\in\partial\Omega$, is a \textsl{Rellich admissible function} of Problem \eqref{defJrelax} whenever it belongs to the admissible set $\Ub$\footnote{In other words whenever $-M\leq \tilde a_{x_0}\leq M$ a.e. in $\partial\Omega$ since the function $\tilde a_{x_0}$ is constructed in such a way that $\int_{\partial\Omega}\tilde a_{x_0}\, d\Hn=LM\Hn(\partial\Omega)$.}.
\end{definition}

\subsection{Statement of the results}
Let us first first state an existence result and highlight the connection between Problems \eqref{truncPbOpt} and \eqref{defJrelax}.

\begin{proposition}\label{GammaCvProp}
Let $L\in[-1,1]$ and $N\in \N^*$. Assume that $\Omega$ either is convex or has a $\mathcal{C}^{1,1}$ boundary. Then, Problem \eqref{truncPbOpt} has at least one solution $a_N$ in $\Ub$.
Moreover, every sequence $(a_N)_{N\in\N^*}$ of maximizers of $J_N$ in $\Ub$ converges (up to a subsequence) for the $L^{\infty}(\partial\Omega,[-M,M])$ weak-star topology to a solution of Problem \eqref{defJrelax}, and 
$$
\max_{a\in\Ub}J_\infty(a)=\lim_{N\rightarrow +\infty}J_N(a_N^*).
$$
\end{proposition}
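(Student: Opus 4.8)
The plan is to establish both assertions by the direct method of the calculus of variations, the whole argument resting on the weak-star compactness of $\Ub$ and on the fact that $J_N$ and $J_\infty$ are built from weak-star continuous linear functionals. First I would prove existence for \eqref{truncPbOpt}. The class $\Ub$ is nonempty (it contains the constant function $LM$, admissible since $|L|\leq 1$), convex, bounded in $L^\infty(\partial\Omega)$, and weak-star closed: the pointwise constraint $-M\leq a\leq M$ defines a weak-star closed set, while the integral constraint is the preimage of the point $LM\Hn(\partial\Omega)$ under the weak-star continuous functional $a\mapsto\int_{\partial\Omega}a\,d\Hn$. By the Banach--Alaoglu theorem $\Ub$ is thus weak-star compact, and since $L^1(\partial\Omega)$ is separable this topology is metrizable on $\Ub$, legitimizing sequential arguments. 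For each fixed $j$, the assumption that $\Omega$ is convex or has a $\mathcal{C}^{1,1}$ boundary gives $\partial\phi_j/\partial\nu\in L^2(\partial\Omega)$, hence $(\partial\phi_j/\partial\nu)^2\in L^1(\partial\Omega)$, so that $F_j\colon a\mapsto\frac{1}{\lambda_j}\int_{\partial\Omega}a\,(\partial\phi_j/\partial\nu)^2\,d\Hn$ is weak-star continuous. As a finite infimum of such maps, $J_N=\inf_{1\leq j\leq N}F_j$ is weak-star upper semicontinuous; together with the compactness of $\Ub$ this produces a maximizer $a_N$, settling the first claim (note that upper semicontinuity, rather than continuity, is exactly what maximization requires).

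For the asymptotics I would exploit the monotonicity built into the definitions \eqref{defJaNrelax}--\eqref{defJarelax}: since the infimum runs over a larger index set as $N$ increases, one has $J_\infty\leq J_{N+1}\leq J_N$ pointwise on $\Ub$. Writing $m_N=\max_{\Ub}J_N$ and $m_\infty=\sup_{\Ub}J_\infty$, this gives $m_\infty\leq m_{N+1}\leq m_N$, so $(m_N)$ is non-increasing and bounded below, hence converges to some $\ell\geq m_\infty$. Given an arbitrary sequence $(a_N)$ of maximizers, weak-star compactness allows me to extract a subsequence $a_{N_k}\rightharpoonup a^*\in\Ub$.

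The crux is to show $J_\infty(a^*)\geq\ell$. Fix $j\in\N^*$; then for every $k$ with $N_k\geq j$ the defining infimum yields
$$
m_{N_k}=J_{N_k}(a_{N_k})\leq F_j(a_{N_k})=\frac{1}{\lambda_j}\int_{\partial\Omega}a_{N_k}\Big(\frac{\partial\phi_j}{\partial\nu}\Big)^2\,d\Hn .
$$
Passing to the limit $k\to\infty$, with $m_{N_k}\to\ell$ on the left and the weak-star continuity of $F_j$ on the right, I get $\ell\leq F_j(a^*)$; as $j$ is arbitrary, taking the infimum over $j$ gives $\ell\leq J_\infty(a^*)$. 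Since $a^*\in\Ub$ also forces $J_\infty(a^*)\leq m_\infty\leq\ell$, all these quantities coincide: $a^*$ maximizes $J_\infty$, and $\lim_N m_N=\ell=m_\infty=\max_{\Ub}J_\infty$, which is the asserted identity. As this reasoning applies to every weak-star convergent subsequence extracted from any maximizing sequence $(a_N)$, the convergence statement follows.

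The argument is essentially a transparent instance of $\Gamma$-convergence, made painless by the monotonicity $J_\infty\leq J_{N+1}\leq J_N$, so no step is genuinely deep. The only points demanding care are the well-posedness of the functionals---precisely that $\partial\phi_j/\partial\nu\in L^2(\partial\Omega)$, which is where the regularity hypothesis on $\Omega$ intervenes---and the verification that the integral constraint is weak-star closed, ensuring the compactness of $\Ub$ that simultaneously drives the existence proof and the passage to the limit.
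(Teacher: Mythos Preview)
Your proof is correct and follows exactly the approach the paper intends: the paper does not prove this proposition but refers to Lemma~\ref{lem:existPinfty} (weak-star compactness of $\Ub$ plus upper semicontinuity of an infimum of weak-star continuous linear functionals) for existence, and to a $\Gamma$-convergence result in \cite{PTZobsND} for the asymptotic statement. Your argument faithfully reconstructs both, exploiting the pointwise monotonicity $J_\infty\leq J_{N+1}\leq J_N$ together with the weak-star continuity of each $F_j$, which is precisely the mechanism behind the cited $\Gamma$-convergence.
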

We will not provide the proof of this result since it is a straightforward adaptation of the proof of Lemma \ref{lem:existPinfty} (see Section \ref{section2}) for the existence, and of the proof of \cite[Theorem 8]{PTZobsND} for the $\Gamma$-convergence property. 

\bigskip

For the sake of readability, we describe hereafter simplified versions of the main results of this article under the following assumption of the domain $\Omega$:

\begin{equation}\tag{$H$}\label{hyp:dom}
\text{$\Omega$ is a bounded connected domain of $\R^n$ with a $\mathcal{C}^{1,1}$ boundary}.
\end{equation} 

Further results in the case where $\Omega$ is convex will be stated in the body of the article.

\paragraph{\sf{\large{Analysis of Problem \eqref{truncPbOpt}.}}}

According to Proposition \ref{GammaCvProp}, Problem \eqref{truncPbOpt} has at least one solution $a_N$. In the following result, we aim at describing $a_n$, wondering whether it may be an extremal point of the convex set $\Ub$, in other words a function either equal to $M$ or $-M$ a.e. in $\partial\Omega$. In control theory, a function enjoying such a property is said to be {\it bang-bang}. Uniqueness of solutions for Problem  \eqref{truncPbOpt} can then be inferred from this property.

\begin{customthm}{A} \label{theo:paris1238}
Let $N\in\N^*$. For \emph{generic} domains $\Omega$ whose boundary is at least of class $\mathcal{C}^2$, Problem \eqref{truncPbOpt} has a unique solution which is moreover {\it bang-bang}. 
\end{customthm}

A complete version of this theorem is provided in Theorem \ref{analytics}. Here, genericity is understood in terms of analytic deformations of the domain. This result is proved in Section~\ref{sec:prooftheoAnalytics}.

\paragraph{\sf{\large{Analysis of Problem \eqref{defJrelax}.}}} 
As a preliminary remark, notice that Problem \eqref{defJrelax} has at least a solution, as stated in Lemma \ref{lem:existPinfty}.

The results provided in Theorems \ref{prop:metz1756} and \ref{theo:paris1238} suggest to investigate the two following issues:
\begin{enumerate}
\item for which values of the parameters do the Rellich functions defined by \eqref{def:tildeax0} still remain optimal for Problem \eqref{defJrelax}?
\item what happens when restricting the search of solutions to {\it bang-bang} densities for Problem \eqref{defJrelax}?
\end{enumerate}

\begin{customthm}{B}[Optimality of Rellich functions] \label{th:2019}
Let $\Omega\subset \R^n$ be a domain satisfying \eqref{hyp:dom}.
Introduce 
\begin{equation}\label{Lstar}
L^c_{n} = \min \left\{ 1, L^*_{n}(\Omega) \right\}\qquad{ with }\qquad L^*_{n}(\Omega) =\frac{n|\Omega|}{\Hn (\partial\Omega) \inf_{x_0\in\R^n}\ell_{\partial\Omega}(x_0)},
\end{equation}
where $\ell_{\partial\Omega}(x_0)$ denotes the distance from $x_0$ to the furthest point of $\partial\Omega$\footnote{Let $x_0\in \R^n$. The quantity $\ell_{\partial\Omega}(x_0)$ is defined by
$
\ell_{\partial\Omega}(x_0)=\max_{x\in \partial\Omega}\Vert x-x_0\Vert.
$
}
.Then, there exists a Rellich function $\tilde a_{x_0}$ (defined by \eqref{defax0}) solving Problem  \eqref{defJrelax} if and only if $L\in [-L^c_{n},L^c_{n}]$.
\end{customthm}
Note that, according to Theorem \ref{prop:metz1756}, the optimality of Rellich functions is equivalent to their admissibility, in other words the existence of a Rellich function belonging to $\Ub$. Thus, the number $L^c_{n}(\Omega)$ corresponds to the largest possible value of $L$ such that there exists $x_0\in \overline{\Omega}$ for which $-M\leq \tilde a_{x_0} \leq M$ pointwisely in $\Omega$.

Actually, we provide in Section \ref{sec:soveRelaxopt} a refined version of this result (see Theorem \ref{OptimalValue}) and we comment on the critical value $L^c_{n}$ and the function $\ell_{\partial\Omega}$ involved above, which we even compute explicitly in some particular cases in Section \ref{Invest}.

\medskip

According to Theorem \ref{theo:paris1238}, {\it bang-bang} functions of $\Ub$, in other words extremal points of $\Ub$, solve Problem \eqref{truncPbOpt} for generic choices of domains $\Omega$. This leads to investigate the relationships between Problem \eqref{defJrelax} and a close version where only {\it bang-bang} functions are involved. Let $a$ be an extremal point of $\Ub$. Then, there exists a measurable subset $\Gamma$ of $\partial \Omega$ such that $a=M\chi_\Gamma-M\chi_{\partial\Omega\backslash \Gamma}=M(2\chi_\Gamma-1)$ and the $L^1$ constraint $\int_{\partial\Omega}a=LM\Hn(\partial\Omega)$ reads in that case $\Hn(\Gamma)=\frac{L+1}{2}\Hn(\partial\Omega)$.  We then introduce the optimal design problem
\begin{equation}\label{mainPbOpt}\tag{$\mathcal{P}_\infty^{\rm bb}$}
\boxed{
\sup_{\chi_\Gamma \in \mathcal{U}_{L,M} }J_\infty(M\chi_\Gamma-M\chi_{\partial\Omega\backslash \Gamma})
}
\end{equation}
where $\mathcal{U}_{L,M}$ denotes the set of extremal points of $\Ub$, namely
\begin{equation}\label{def:UL}
\mathcal{U}_{L,M} = \left\{ M\chi_\Gamma-M\chi_{\partial\Omega\backslash \Gamma}\ \vert\ \Gamma\ \subset \partial\Omega \ \textrm{ and}\ \Hn(\Gamma)=\frac{L+1}{2}\Hn(\partial \Omega )\right\}
\end{equation}

Of course, one of the main difficulties in that issue is to deal with a hard binary non-convex constraint on the function $a$, preventing {\it a priori} the solution to be an element of the family $\{\tilde a_{x_0}\}_{x_0\in \overline{\Omega}}$ (since $\Omega$ has a $\mathcal{C}^{1,1}$ boundary, each Rellich function is continuous on $\partial\Omega$ and cannot be an element of  $\mathcal{U}_{L,M} $ whenever $L\notin \{-1,1\}$). 
As it will be  emphasized in the sequel, Problem \eqref{mainPbOpt} plays an important role when dealing with inverse problems involving sensors. This means that, among all subsets $\Gamma$ of $\partial \Omega$ having a prescribed Hausdorff measure, we want to recover the maximal part of the ``boundary Neumann energy measure''.

We will establish the following result. 

\begin{customthm}{D} [\textnormal{no-gap}]
Let $\Omega$ be a domain satisfying \eqref{hyp:dom}. Under strong assumptions on $\Omega$ (related to quantum ergodicity issues), and using the notations of Theorem \ref{th:2019} above, the optimal values of Problems \eqref{defJrelax} and \eqref{mainPbOpt} are the same.
\end{customthm}
A complete version of this result is provided in Theorem \ref{No-Gap}.  Although we do not know whether Problem \eqref{mainPbOpt} has a solution, the investigation of several particular cases in Section \ref{Invest} let us make the conjecture that for almost every value of the constraint parameter $L$,  Problem \eqref{mainPbOpt} has no solution.

\subsection{Structure of the article}

Section \ref{TruncSection} is devoted to solving Problem \eqref{truncPbOpt}. In \cite{PTZObs1,PTZobsND}, it had been proved that there exists a unique optimal set, depending however on $N$ in a very unstable way (spillover phenomenon). Here, existence and uniqueness are, by far, more difficult to state. By exploiting analytic perturbation properties, we prove the existence of a unique optimal set (depending on $N$) for generic domains $\Omega$.

In Section \ref{section2}, we focus on Problem \eqref{defJrelax}, highlighting an interesting geometric phenomenon that can be measured by {\em Rellich functions}.

Relationships between Problems \eqref{defJrelax} and \eqref{mainPbOpt} are investigated in Section \ref{sec:mainPbOptm}. One shows in particular that the optimal values of these two problems may coincide under adequate quantum ergodicity assumptions. We construct maximizing sequences for Problem \eqref{mainPbOpt}. We also consider several particular cases (square, disk, angular sector) as an illustration.

Finally, we provide an interpretation of the above problems in terms of shape sensitivity analysis in Section \ref{FurtherComments}. Other motivations related to observation theory and more specifically to the optimal location or shape or sensors for vibrating systems (as already shortly alluded) are evoked.

%%%%%%%%%%%%%%%%%%%%%%%%%%%%%%%%%%%%%%%%%%%%%%%%%%%%%%%%%%%%%%%%%%%%%

\section{Solving of the optimal design problem \eqref{truncPbOpt}}
\label{TruncSection}

\subsection{A genericity result}

In this section, we investigate uniqueness issues and the characterization of maximizers. We prove that Problem \eqref{truncPbOpt} has a unique solution which is moreover {\it bang-bang} for generic domains $\Omega$. The wording ``unique solution'' means that two solutions are equal almost everywhere in $\partial \Omega$.

Before stating the main result of this section, let us clarify the notion of genericity we will use.
Let $\alpha \in\N\backslash\{0,1\}$. In what follows, we will denote by $\Ck$ the set of $\mathcal{C}^\alpha$-diffeomorphisms in $\R^n$. 
We say that a subset $\Omega$ of $\R^n$ is a $\mathcal{C}^\alpha$ topological ball whenever there exists $T\in \Ck$ transforming the unit ball into $\Omega$. We consider the topological space
$$
\Sigma_\alpha=\{T(B(0,1)), \ T\in \Ck\}
$$ 
endowed with the metric induced by that of $\mathcal{C}^k$-diffeomorphisms\footnote{Recall that one can endow $\Ck$ with its topology $\tau$ inherited from the family of semi-norms defined by
$$ 
p_\eta (T) = \sup_{x\in K, j\in \llbracket 1,\alpha \rrbracket^n, |j|\leq \eta} | \partial^j T(x) |. 
$$
for every $\eta \in \{ 1,\dots, \alpha\}$, $K\subset \R^n$ compact,  and $T\in \mathcal{C}^\alpha(\R^n,\R^n)$, making it a complete metric space.}, making it a complete metric space. 
Since our approach is based on analyticity properties, it is convenient to introduce the set $\mathcal{D}$ of domains $\Omega\subset \R^n$ having an analytic boundary, as well as the subset $\A_\alpha = \D \cap \Sigma_\alpha$ of the topological space $\Sigma_\alpha$.

\begin{theorem}\label{analytics}
Let $\alpha \in\N\backslash\{0,1\}$ and $N\in\N^*$. Consider the property:
\begin{itemize}
\item[$(\mathcal{Q}_N)$] {\it for every $L\in [-1,1]$, the optimal design problem \eqref{truncPbOpt} has a unique solution $a_N$ which is therefore an extremal point of the convex set $\Ub$ (in other words, $a_N$ is bang-bang)}.
\end{itemize}
The set of the domains $\Omega\in \mathcal{A}_\alpha$ for which the property $(\mathcal{Q}_N)$ holds true is open and dense in $\A_\alpha$.
\end{theorem}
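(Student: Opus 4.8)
The plan is to reduce property $(\mathcal{Q}_N)$ to a linear independence condition on the boundary traces, to establish openness by a compactness/continuity argument, and to establish density by an analytic perturbation of the domain. Throughout I write $g_j=\left(\frac{\partial\phi_j}{\partial\nu}\right)^2$ for the squared Neumann traces, which are well defined and (below) analytic on $\partial\Omega$.

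First I would analyze the optimality system. Fix $\Omega\in\A_\alpha$ and $L\in[-1,1]$ (the endpoints $L=\pm 1$ are trivial since then $\Ub$ is the singleton $\{\pm M\}$, automatically bang-bang). The map $a\mapsto\frac{1}{\lambda_j}\int_{\partial\Omega}a\,g_j\,d\Hn$ is linear, so $J_N$ is concave and upper semicontinuous on the convex weak-$*$ compact set $\Ub$, and a maximizer $a^*$ exists by Proposition \ref{GammaCvProp}. Let $I^*=\{j\leq N:\frac{1}{\lambda_j}\int_{\partial\Omega}a^*g_j\,d\Hn=J_N(a^*)\}$ be the nonempty active set. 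By Danskin's theorem the supergradient of $J_N$ at $a^*$ is the convex hull of the forms $\frac{1}{\lambda_j}g_j$, $j\in I^*$, so the first-order condition yields weights $\alpha_j\geq 0$, $\sum_{j\in I^*}\alpha_j=1$, for which $a^*$ maximizes the single linear form $a\mapsto\int_{\partial\Omega}a\,\varphi^*\,d\Hn$ over $\Ub$, where $\varphi^*=\sum_{j\in I^*}\frac{\alpha_j}{\lambda_j}g_j$. By the bathtub principle, $a^*=M$ on $\{\varphi^*>c\}$ and $a^*=-M$ on $\{\varphi^*<c\}$, with $c$ the multiplier of the integral constraint, $a^*$ being free only on $\{\varphi^*=c\}$.

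Since $\Omega\in\D$ has analytic boundary, each $\phi_j$ is analytic up to $\partial\Omega$ (Morrey--Nirenberg analytic elliptic regularity), hence $\varphi^*$ is real-analytic on the connected analytic manifold $\partial\Omega$. Therefore either $\Hn(\{\varphi^*=c\})=0$, so $a^*$ is bang-bang, or $\varphi^*\equiv c$, which reads $\sum_{j\in I^*}\frac{\alpha_j}{\lambda_j}g_j-c\equiv 0$, a \emph{nontrivial} linear relation (the $\alpha_j/\lambda_j$ are not all zero) inside the family $\mathcal{F}_N=\{1\}\cup\{g_j:1\leq j\leq N\}$. Consequently, if $\mathcal{F}_N$ is linearly independent in $L^2(\partial\Omega)$, then every maximizer is bang-bang for every $L$; uniqueness follows because the maximizer set is convex, all its points are extremal in $\Ub$ (bang-bang), and a convex subset of $\Ub$ all of whose points are extremal is a singleton. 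Thus independence of $\mathcal{F}_N$ implies $(\mathcal{Q}_N)$. For openness I argue by closedness of the failure set: if $\Omega_k\to\Omega_0$ in $\A_\alpha$ and each $\Omega_k$ fails $(\mathcal{Q}_N)$, then (averaging two maximizers if needed) there is a non-bang-bang maximizer, hence a \emph{constant} switching function $\sum_{j}\frac{\alpha_j^k}{\lambda_j(\Omega_k)}g_j^k\equiv c_k$ on $\partial\Omega_k$ for some $L_k$. The weights lie in the compact simplex, $L_k\in[-1,1]$, and $c_k$ is bounded; using generic simplicity of $\lambda_1,\dots,\lambda_N$ (an open dense condition, on which the eigendata depend continuously under $\mathcal{C}^\alpha$-convergence), one passes to the limit to obtain a constant switching function at $\Omega_0$, so $\Omega_0$ also fails $(\mathcal{Q}_N)$. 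Hence the set where $(\mathcal{Q}_N)$ holds is open.

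For density, fix $\Omega\in\A_\alpha$ with simple $\lambda_1,\dots,\lambda_N$ (reducing to this open dense subset first). Along an analytic normal deformation $\Omega_t=(\mathrm{Id}+tV)(\Omega)$ with $V$ analytic, Kato's analytic perturbation theory makes $\lambda_j(t)$ and the normalized traces $g_j^t$ (pulled back to $\partial\Omega$) real-analytic in $t$, hence the Gram determinant $D(t)$ of $\mathcal{F}_N$ is real-analytic, with $D(t)\neq 0$ iff $\mathcal{F}_N$ is independent on $\partial\Omega_t$. If $D\not\equiv 0$ along some path, its zeros are isolated, so arbitrarily small $t\neq 0$ give domains satisfying $(\mathcal{Q}_N)$, proving density. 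The crux, and the main obstacle, is to exhibit, when $D(0)=0$, one deformation along which $D\not\equiv 0$; equivalently, to show a nontrivial relation $\sum_j c_j g_j+c_0\equiv 0$ on $\partial\Omega$ cannot persist under all analytic deformations. I would do this by shape calculus: differentiating the relation in $t$ and inserting Hadamard's formula for the shape derivative of the Neumann trace (whose boundary derivative $\phi_j'$ solves a Dirichlet problem with data $-\frac{\partial\phi_j}{\partial\nu}\langle V,\nu\rangle$) produces a hierarchy of identities coupling the $g_j$, their products and the profile $\langle V,\nu\rangle$; localizing $V$ near a single boundary point and invoking unique continuation and analyticity of the $\phi_j$ shows these identities cannot hold for all such $V$ unless the eigenfunctions degenerate, which is impossible. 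This combination of analytic perturbation theory, Hadamard derivatives and unique continuation is where essentially all the difficulty lies; the optimality reduction, the passage to linear independence, and openness are comparatively soft.
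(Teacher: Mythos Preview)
Your reduction of $(\mathcal{Q}_N)$ to the linear independence of $\mathcal{F}_N=\{1,g_1,\dots,g_N\}$ on $\partial\Omega$ is correct and coincides with the paper's argument: a saddle-point/Danskin formulation produces a switching function $\varphi^*=\sum_j\frac{\alpha_j}{\lambda_j}g_j$, the bathtub principle forces $a^*$ to be bang-bang outside $\{\varphi^*=c\}$, and analyticity on the connected boundary reduces the obstruction to a nontrivial linear relation in $\mathcal{F}_N$. The uniqueness-by-convexity step is also the paper's.

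There is a small logical slip in your openness argument. From a sequence $\Omega_k$ failing $(\mathcal{Q}_N)$ you correctly extract, in the limit, a nontrivial linear relation in $\mathcal{F}_N$ on $\partial\Omega_0$; but your last sentence ``so $\Omega_0$ also fails $(\mathcal{Q}_N)$'' is not justified: linear \emph{dependence} of $\mathcal{F}_N$ does not by itself produce a non-bang-bang maximizer for some $L$. The clean fix, which is what the paper does, is to prove directly that the set $\{\Omega:\mathcal{F}_N\text{ is linearly independent on }\partial\Omega\}$ is open and dense, and observe that this set is contained in $\{\Omega:(\mathcal{Q}_N)\text{ holds}\}$.

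The substantive gap is in your density argument. You propose, when the Gram determinant $D(0)$ vanishes, to show via Hadamard shape derivatives and unique continuation that a linear relation $\sum_j c_j g_j+c_0\equiv 0$ on $\partial\Omega$ cannot persist under \emph{all} analytic deformations. You acknowledge this is where ``essentially all the difficulty lies'', and indeed your sketch (``a hierarchy of identities \dots\ localizing $V$ \dots'') is not a proof: the shape derivative of $g_j$ involves the solution of an auxiliary Dirichlet problem with global boundary data, so localizing $V$ does not localize the perturbation of $g_j$, and it is not clear how to force a contradiction from these coupled identities. The paper avoids this entirely by a much simpler device: it \emph{exhibits one explicit domain} where $\mathcal{F}_N$ is linearly independent, namely a non-resonant orthotope $\prod_i(0,\mu_i\pi)$ (approximated in $\A_\alpha$), checks independence there by hand, and then connects an arbitrary $\Omega_1\in\Sigma_\alpha$ to this reference domain $\Omega_0$ by an analytic path $t\mapsto\Omega_t$ of domains with simple $N$ first eigenvalues. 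Along such a path the eigendata, hence a pointwise determinant $\Psi(t)$ detecting independence, are real-analytic in $t$; since $\Psi(0)\neq 0$, the zero set of $\Psi$ is finite, so $\Omega_t\in\Sigma_P$ for $t$ arbitrarily close to $1$. This global ``good endpoint plus analytic curve'' argument replaces your hard local non-persistence claim and is what you are missing.
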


The proof of this theorem is provided in Section \ref{sec:prooftheoAnalytics}. It is quite lengthy and is based on genericity arguments, using analytic domain deformations.

\begin{remark}[On the uniqueness of solutions]\label{SwitchDisk}
It is notable that the uniqueness property stated in Theorem \ref{analytics} for Problem \eqref{truncPbOpt} does not hold whenever $\Omega$ is a  two-dimensional disk. 

Regarding the two-dimensional unit disk $\Omega=D(0;1)$, it is easily shown that the optimal value for Problem \eqref{truncPbOpt} is equal to $\pi L$ for every $N\in\N^*$. Indeed, explicit computations (see Section \ref{Invest}) yield that 
\begin{eqnarray*}
J_N(a)&=&\min \left( \inf_{1\leq n\leq N} \int_0^{2\pi}a(\theta)\cos(n\theta)^2\, d\theta , \ \inf_{1\leq n\leq N} \int_0^{2\pi}a(\theta)\sin(n\theta)^2\, d\theta  \right)\\
&=& \pi L-\frac{1}{2}\sup_{1\leq n\leq N}\left|\int_0^{2\pi}a(\theta)\cos(2n\theta)\, d\theta\right|,
\end{eqnarray*}
by combining the Riemann-Lebesgue lemma with the identity $\min \{-x,x\}=-|x|$ for all $x\in \R$. Hence, one has $J_N(a)\leq \pi L$ for every $a\in \Ub$, and moreover, the upper bound is reached by choosing $a=L$, leading to
$$
\max_{a\in\Ub}J_N(a)=\pi L.
$$

Moreover, we claim that there exist {\it bang-bang} functions of $\Ub$ solving problem \eqref{truncPbOpt}. Notice that the case of the disk is particular since the strategy developed within the proof of Theorem \ref{analytics} does not apply\footnote{Indeed, the proof rests upon the fact that Problem \eqref{truncPbOpt} has necessarily a {\it bang-bang} solution whenever the set of solutions of the equation 
$$
 \sum_{1\leq j \leq N}\frac{\beta_j^*}{\lambda_j}\left( \frac{\partial \phi_j}{\partial \nu} \right)^2=\text{constant}
$$
on $\partial\Omega$ is either empty or discrete, for all choices of the family $(\beta_j^*)_{1\leq j\leq N}$ of nonnegative numbers such that $\sum_{j=1}^N\beta_j^*=1$. When $\Omega$ denotes the two-dimensional unit disk, one shows easily that such a property does not hold true since the squares of the eigenfunctions normal derivatives involve the square of cosine and sine functions, whose combination may be constant on intervals. 
}. Nevertheless, by choosing 
$$
\omega_N=\bigcup_{i=1}^p\left(\frac{2\pi i}{p}-\frac{\pi L}{p},\frac{2\pi i}{p}+\frac{\pi L}{p}\right)\bigcup \left(0,\frac{\pi L}{p}\right)\bigcup \left(\pi-\frac{\pi L}{p},\pi\right),
$$
with $p=\left[\frac{N+1}{2}\right]$ (the notation $[\cdot]$ standing for the integer part function), one computes for $n\in \{1,\dots,N\}$,
\begin{eqnarray*}
\int_{\omega_N}\cos (2n\theta)\, d\theta &=& \frac{1}{2n}\sum_{i=0}^N \left(\sin\left(\frac{4\pi ni}{p}+\frac{2\pi nL}{p}\right)-\sin\left(\frac{4\pi ni}{p}+\frac{2\pi nL}{p}\right)\right)\\
&=& \frac{1}{n}\sin\left(\frac{2\pi nL}{p}\right) \sum_{i=0}^{p-1}\cos \left(\frac{4\pi ni}{p}\right)=0,
\end{eqnarray*}
since for all $n\in \{1,\dots,N\}$, the real number $4n\pi/p$ cannot be a multiple of $2\pi$ (indeed, $2p\pi \geq 2\pi \left[\frac{N+1}{2}\right]\geq N+1$). We hence infer that one has also $\int_{[0,2\pi]\backslash \omega_N}\cos (2n\theta)\, d\theta=0$ and the choice $a_N=M\chi_{\omega_N}-M\chi_{[0,2\pi]\backslash \omega_N}$ yields
$J_N(a_N)=\pi L$, according to the expression of $J_N$ above.

It il also interesting to notice that the uniqueness property of maximizers also fails whenever $\Omega$ is a two-dimensional rectangle. Indeed, the non-uniqueness property is an easy consequence of the rewriting of the criterion (see Section \ref{Invest}), since it can be observed that solutions are only described by their projections on the vertical or the horizontal axis.

Note that similar issues are investigated in \cite[Prop. 1 and Cor. 2]{PTZObs1}.  
\end{remark}

\subsection{Numerical simulations}

In this section, we illustrate the previous results by representing the solution $a_N^*$ of Problem \eqref{truncPbOpt}, whenever it exists. Moreover, according to the proof of Theorem \ref{analytics} (see Section \ref{sec:prooftheoAnalytics}), there exist Lagrange multipliers $\beta^*=(\beta_j^*)_{1\leq j\leq N}\in\R^N_+$ such that $\sum_{1\leq j\leq N}\beta_j^* = 1$ and a positive real number $\Lambda$ such that $\{\varphi^* > \Lambda\}\subset \left\lbrace a^*=M\right\rbrace$, and $\{\varphi^* < \Lambda\}\subset \left\lbrace a^*=-M\right\rbrace$ where
$$
\varphi^* = \sum_{1\leq j \leq N}\frac{\beta_j^*}{\lambda_j}\left( \frac{\partial \phi_j}{\partial \nu} \right)^2.
$$
Moreover, for generic domains $\Omega$, the previous inclusions are in fact equalities. In the description of the numerical method, we assume to be in such a case.

The underlying eigenvalue problem is first discretized by using finite elements to compute an approximation of the functions $\left(\frac{\partial\phi_j}{\partial\nu}\right)^2$, $1\leq j\leq N$ on $\partial\Omega$. This allows us to consider an approximation of Problem \eqref{truncPbOpt} writing as a finite-dimensional minimization problem under equality and inequality constraints. Hence, we determine an approximation of the Lagrange multipliers $\beta^*=(\beta_j^*)_{1\leq j\leq N}\in\R^N_+$  and $\Lambda$, evaluated by using a primal-dual approach combined with an interior point line search filter method\footnote{The basic idea behind this approach, inspired by barrier methods, is to interpret the discretized optimization problem as a bi-objective optimization problem with the two goals of minimizing the
objective function and the constraint violation, see \cite{IPOPT} for the complete description of the algorithm.}. 
At the end, the set $\left\lbrace a^*=M\right\rbrace$ is plotted by using that
$$
\{\varphi^* > \Lambda\}= \left\lbrace a^*=M\right\rbrace.
$$

Practically speaking, the dual problem is solved with the help of a software package for non-linear optimization, IPOPT  combined with AMPL (see \cite{AMPL,IPOPT}). 

On Figures \ref{TruncSquare} and \ref{TruncEllipse} hereafter, we assume that $\Omega$ denotes respectively a square and an ellipse. Problem \eqref{truncPbOpt} is solved for several values of $N$ and $L$, in the case $M=1$.

Regarding the case of $\Omega=[0,\pi]^2$ (see Figure \ref{TruncSquare}) and denoting by $a_N^*$ a solution of Problem \eqref{truncPbOpt} for given values of $L$ and $M$, we know that $(a_N^*)_{N\in \N^*}$ converges weakly-star in $L^\infty(\Omega)$, up to a subsequence, to a solution of Problem \eqref{defJrelax}, according to Proposition \ref{GammaCvProp}. Moreover, the simulations suggest that $(a_N^*)_{N\in \N^*}$ converges to a constant density, which is in accordance with the analysis of Problem \eqref{defJrelax} in the particular case where $\Omega$ is a rectangle, in Section \ref{prop:rect}. 

\begin{figure}[h]
\begin{center}
\includegraphics[width=4.5cm,height=2.6cm]{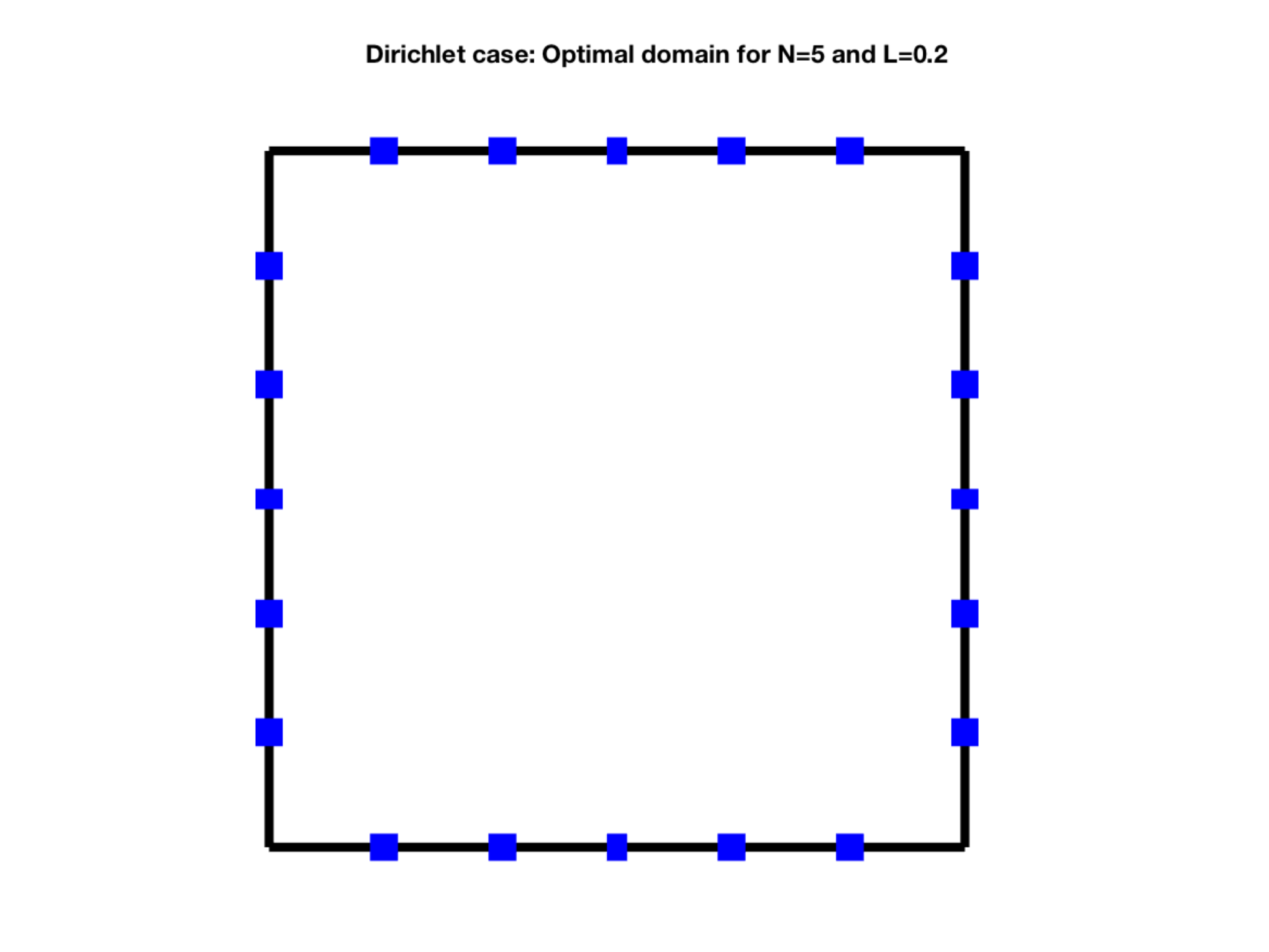}
\includegraphics[width=4.5cm,height=2.6cm]{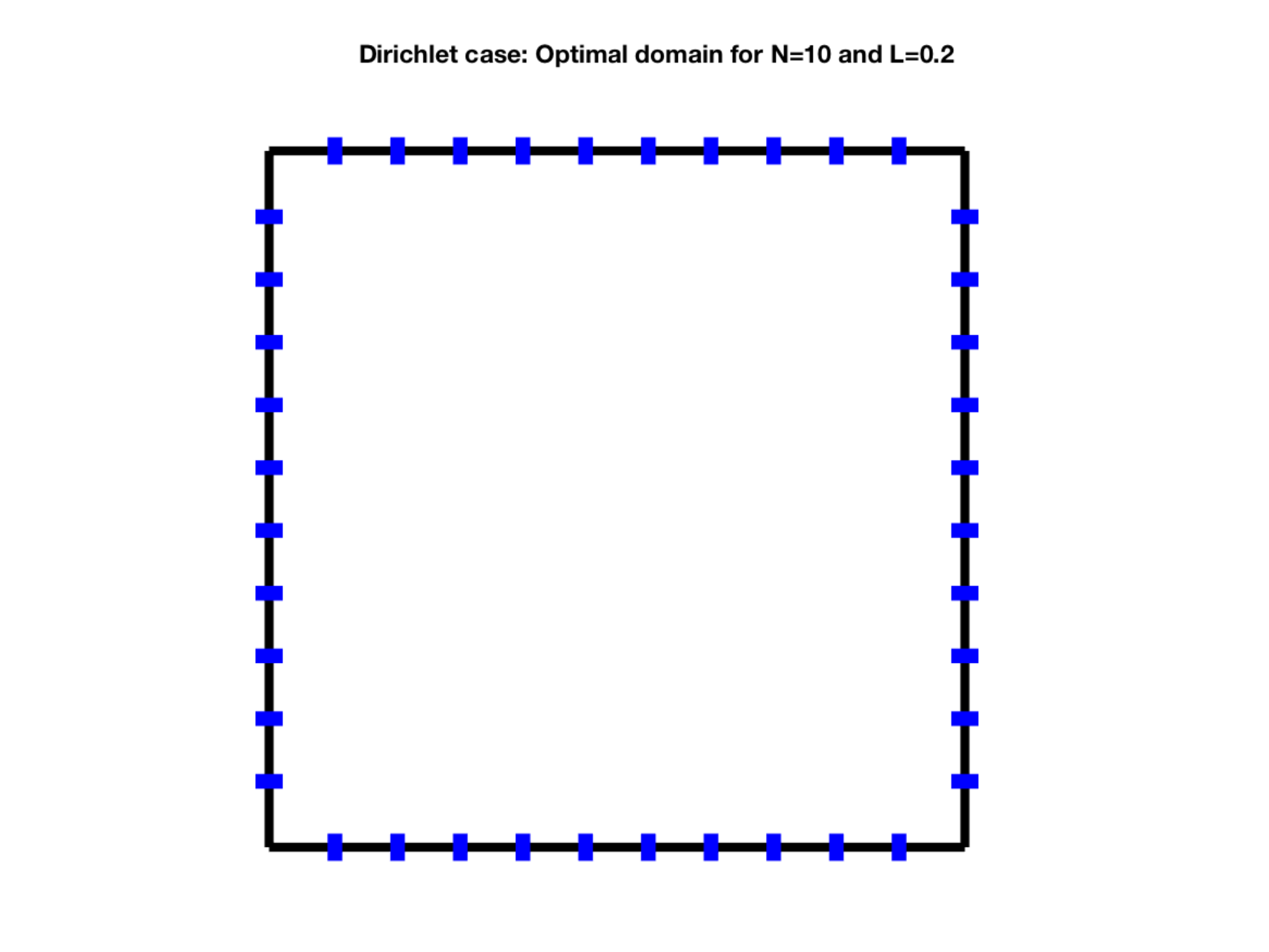}
\includegraphics[width=4.5cm,height=2.6cm]{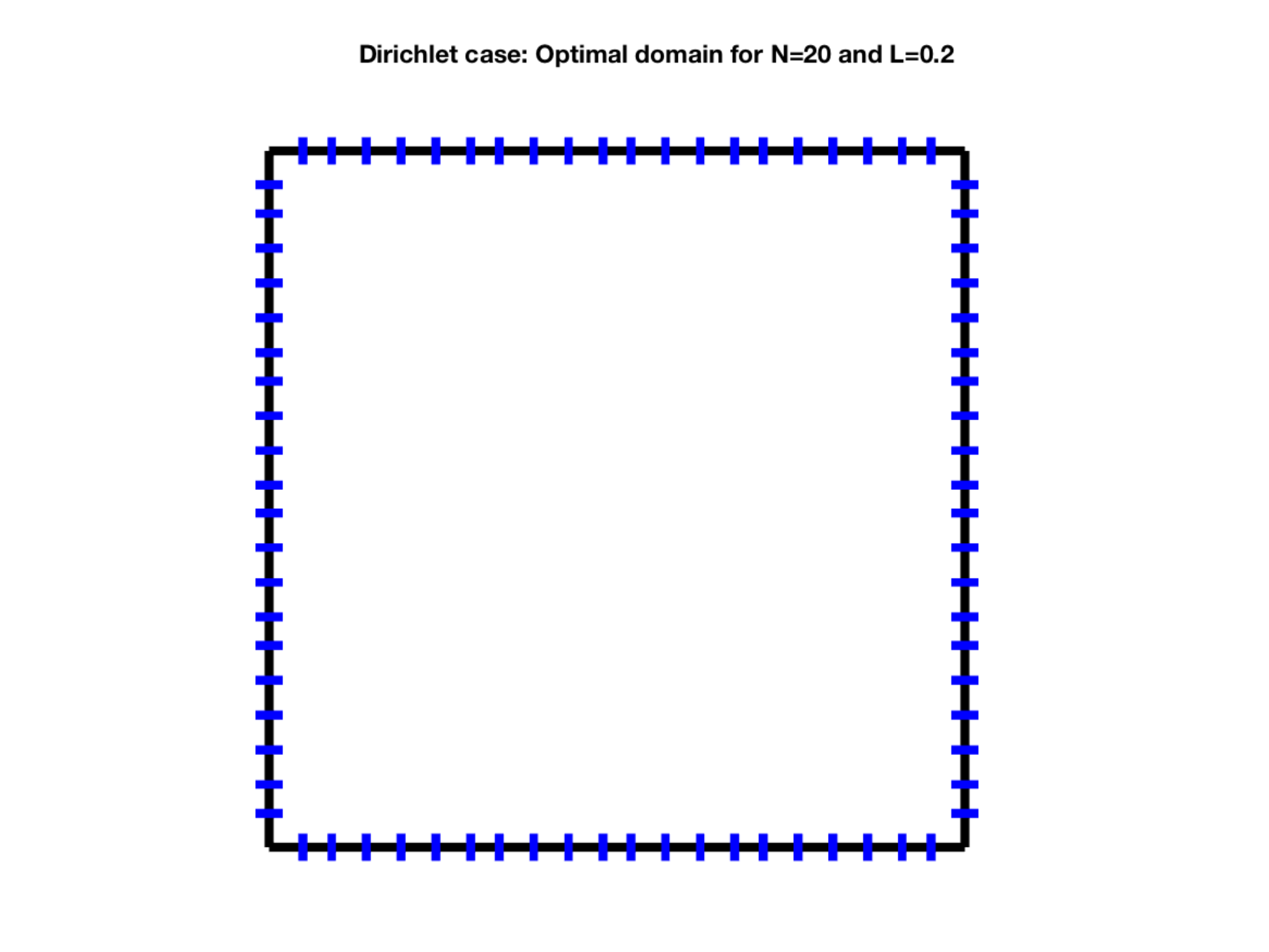}\\
\includegraphics[width=4.5cm,height=2.6cm]{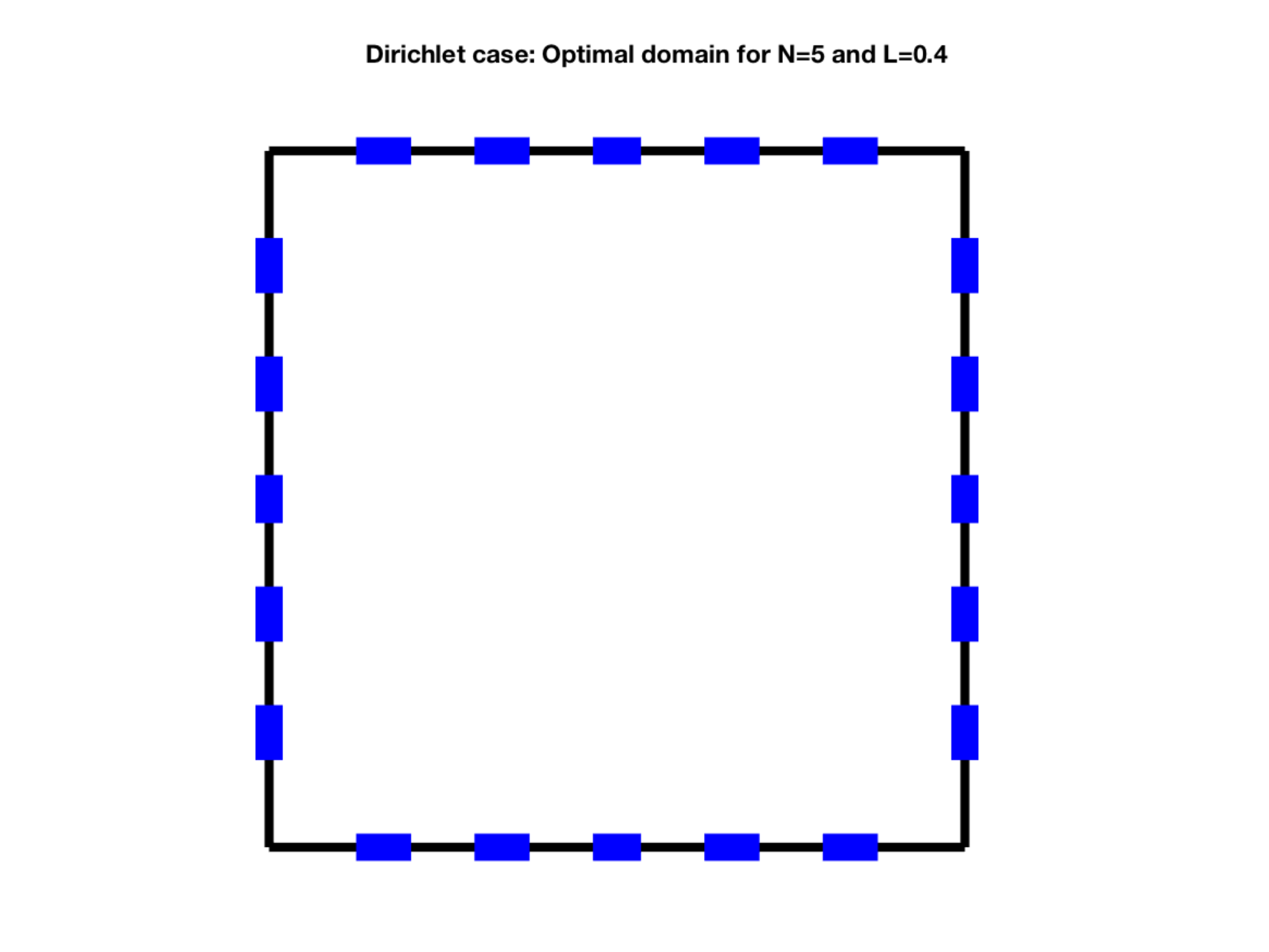}
\includegraphics[width=4.5cm,height=2.6cm]{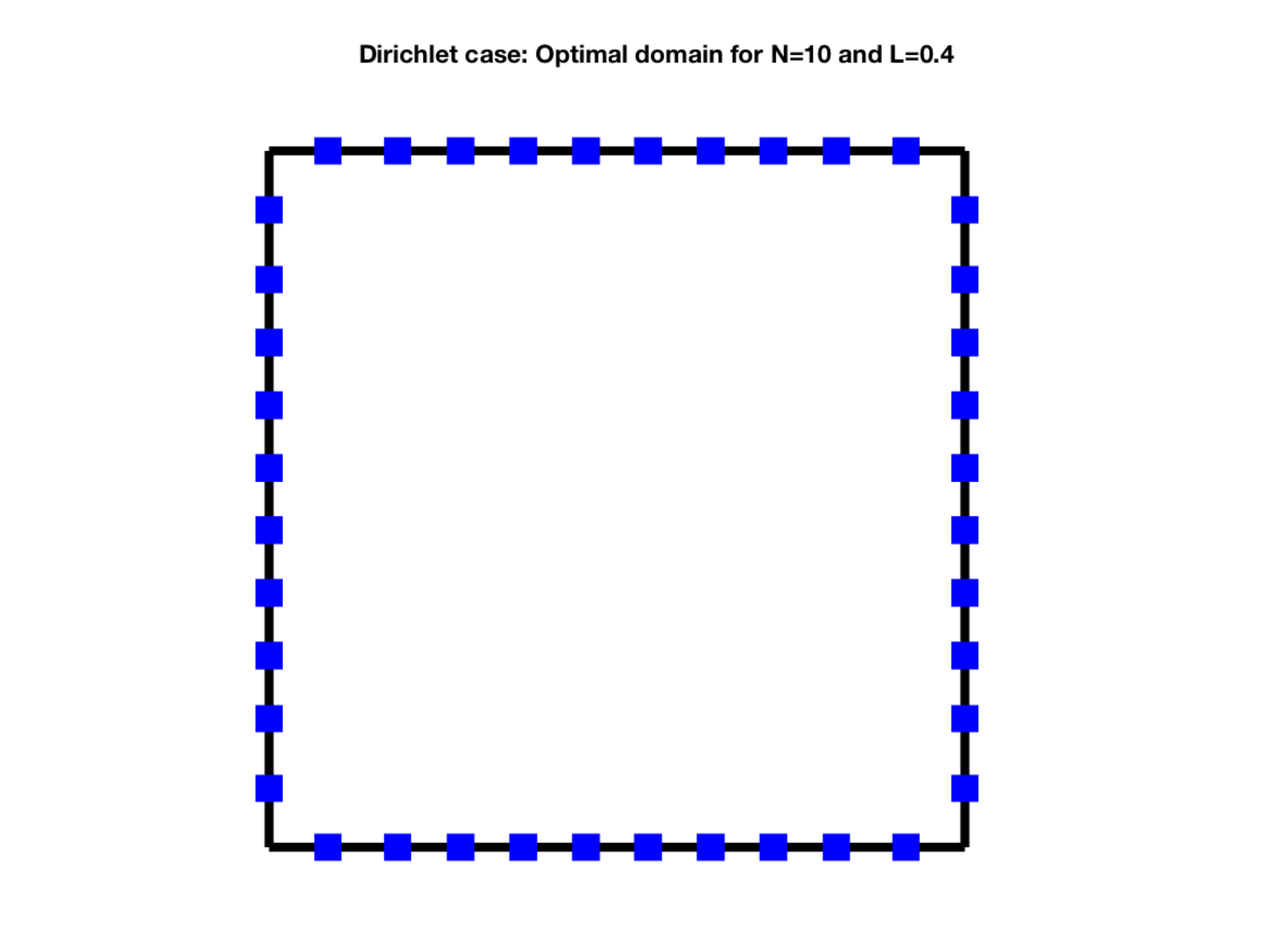}
\includegraphics[width=4.5cm,height=2.6cm]{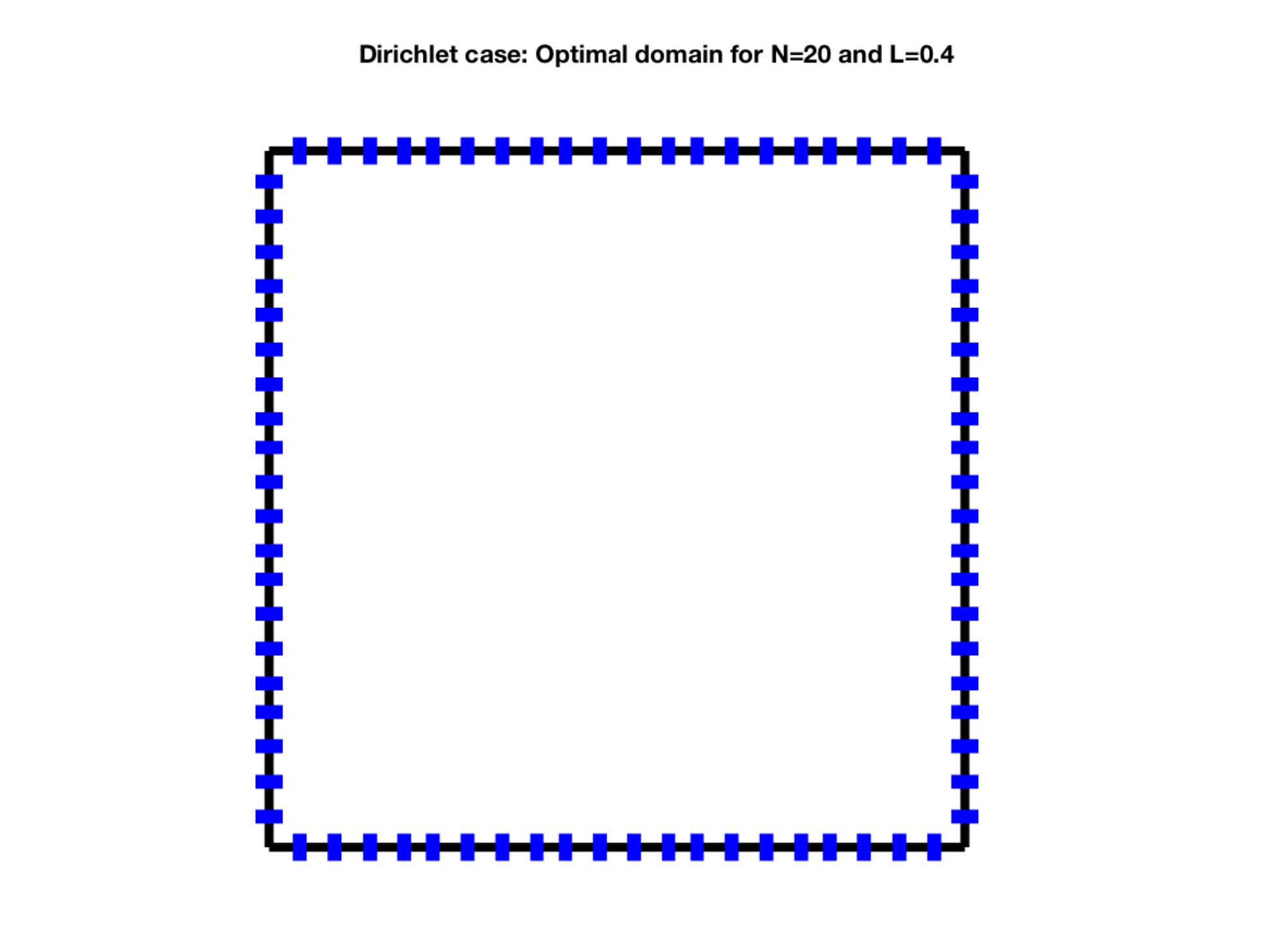}\\
\includegraphics[width=4.5cm,height=2.6cm]{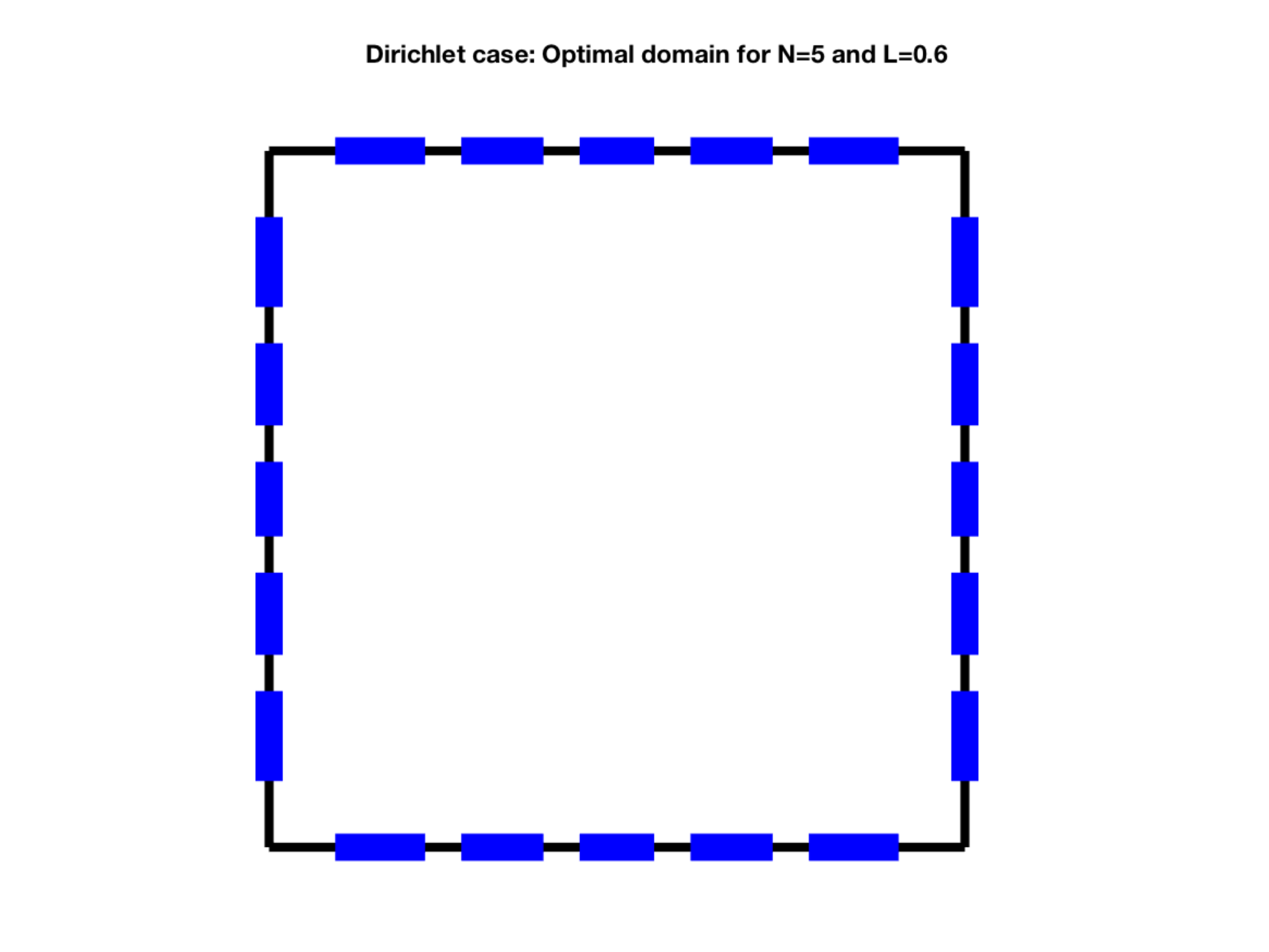}
\includegraphics[width=4.5cm,height=2.6cm]{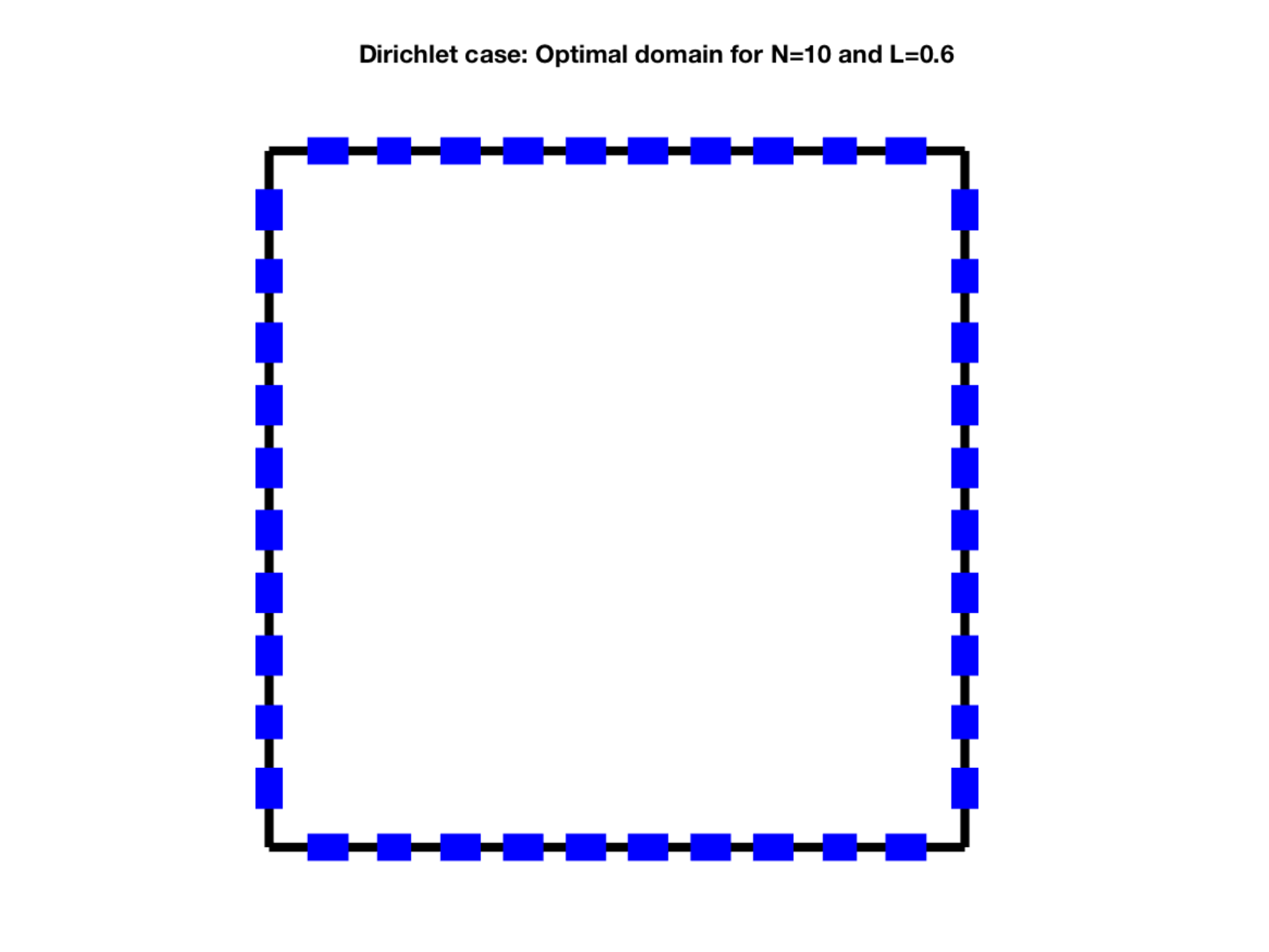}
\includegraphics[width=4.5cm,height=2.6cm]{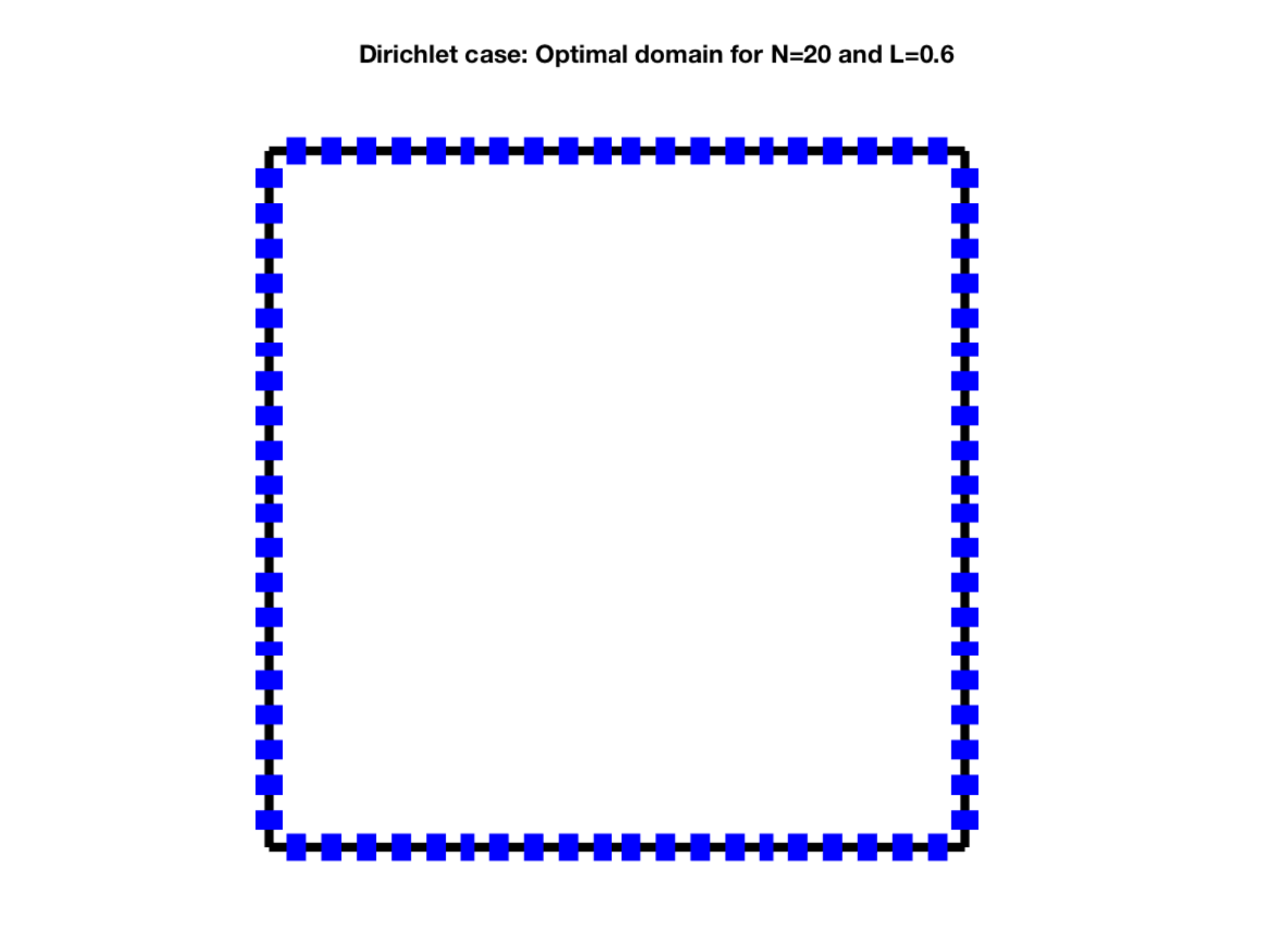}
\caption{$\Omega=[0,\pi]^2$ and $M=1$. Examples of maximizers $a_N^*$ for $J_N$. The bold line corresponds to the set $\Gamma=\{a_N^*=M\}$. Recall that $\int_{\partial \Omega}a \, d\Hn=LM\Hn(\partial\Omega)$ and $\Hn(\Gamma)=\frac{L+1}{2}\Hn(\partial \Omega )$.
Row1: $L=-0.6$ (i.e. $\Hn(\Gamma)=0.2\Hn(\partial\Omega)$); row 2: $L=-0.2$ (i.e. $\Hn(\Gamma)=0.4\Hn(\partial\Omega)$); row 3: $L=0.2$ (i.e. $\Hn(\Gamma)=0.6\Hn(\partial\Omega)$). From left to right: $N=20$, $N=50$, $N=90$.
\label{TruncSquare}}
\end{center}
\end{figure}

On Figure \ref{TruncEllipse}, computations of $a_N^*$ are made for the ellipse having as cartesian equation $x^2+y^2/2=1$, and for $M=1$ and several values of $L$. According to Proposition \ref{GammaCvProp}, $(a_N^*)_{N\in \N^*}$ converges, up to a subsequence, to a solution of Problem \eqref{defJrelax}. Although we were not able to determine all maximizers of Problem \eqref{defJrelax}, and thus, all the closure points of $(a_N^*)_{N\in \N^*}$ we know that one of them is given by
$$
\tilde a_{(0,0)}(x,y)=\frac{L\Hn(\partial\Omega)}{2\sqrt{2}\pi\sqrt{4x^2+y^2}}(2x^2+y^2), \qquad (x,y)\in \Omega,
$$ 
whenever $L\leq L_2^c$ with $\Hn(\partial\Omega)\simeq 7.5845$ and $L_2^c\simeq 0.4142$ (by using Theorem \ref{th:2019} and Proposition \ref{prop:ellcircum}). The profile of $\tilde a_{(0,0)}$ is similar to the left plot of Figure \ref{Fig:densiteEllSA}. All these considerations suggest that, unlike the case of the square, the closure points of $(a_N^*)_{N\in \N^*}$ are non constant densities looking more concentrated around the major axis extremities.  

\begin{figure}[h]
\begin{center}
\includegraphics[width=4cm,height=2.6cm]{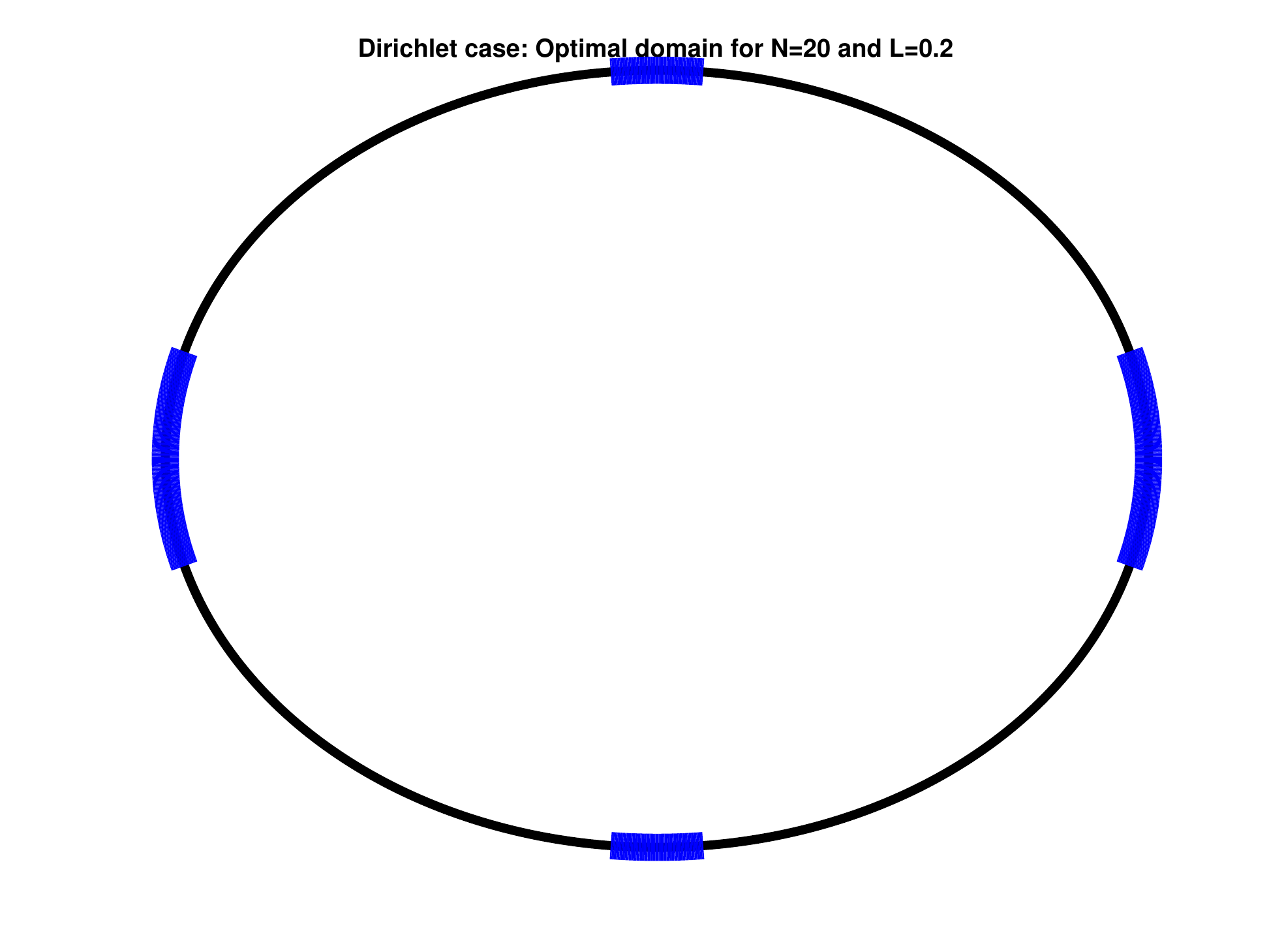}
\includegraphics[width=4cm,height=2.6cm]{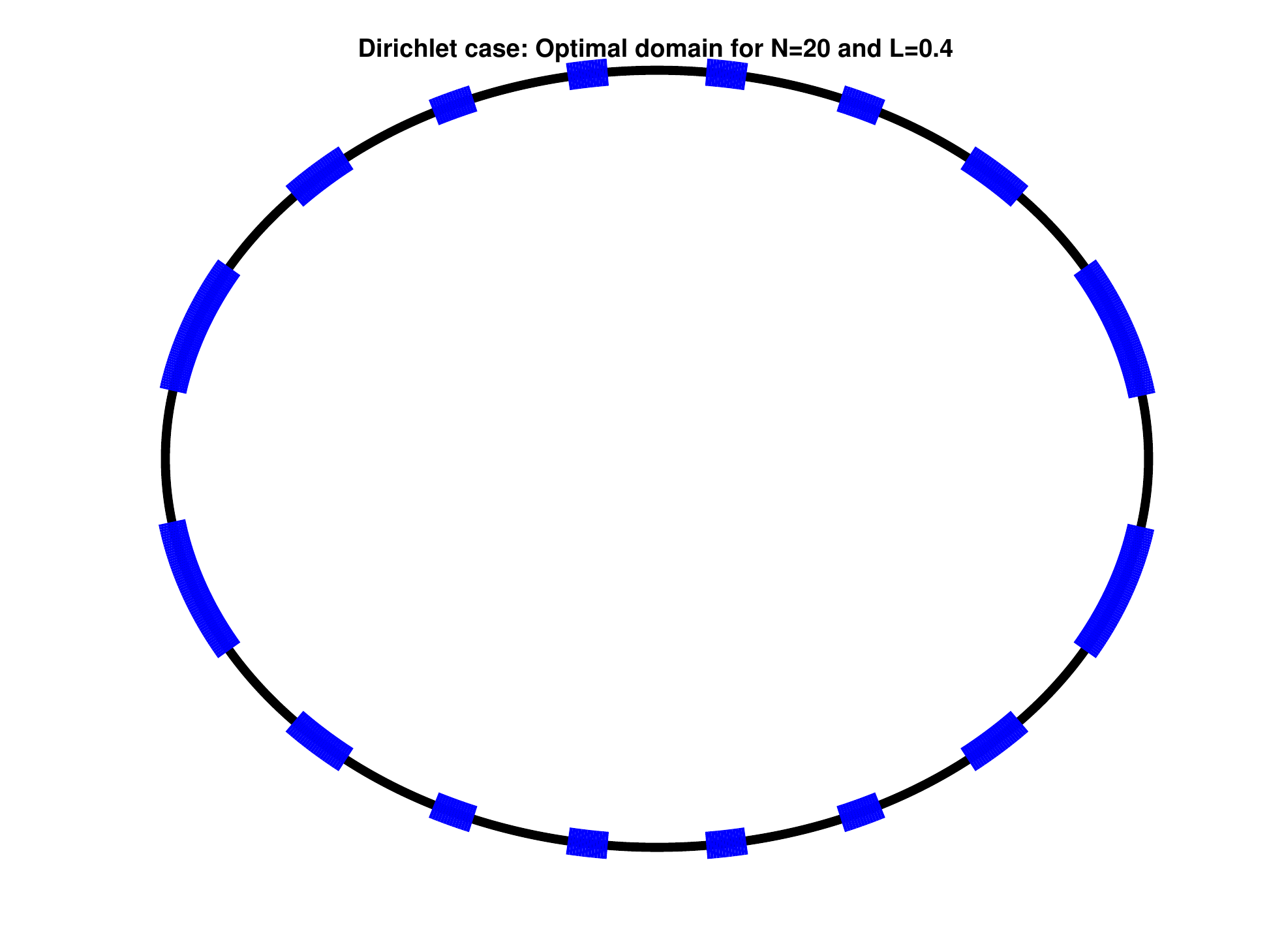}
\includegraphics[width=4cm,height=2.6cm]{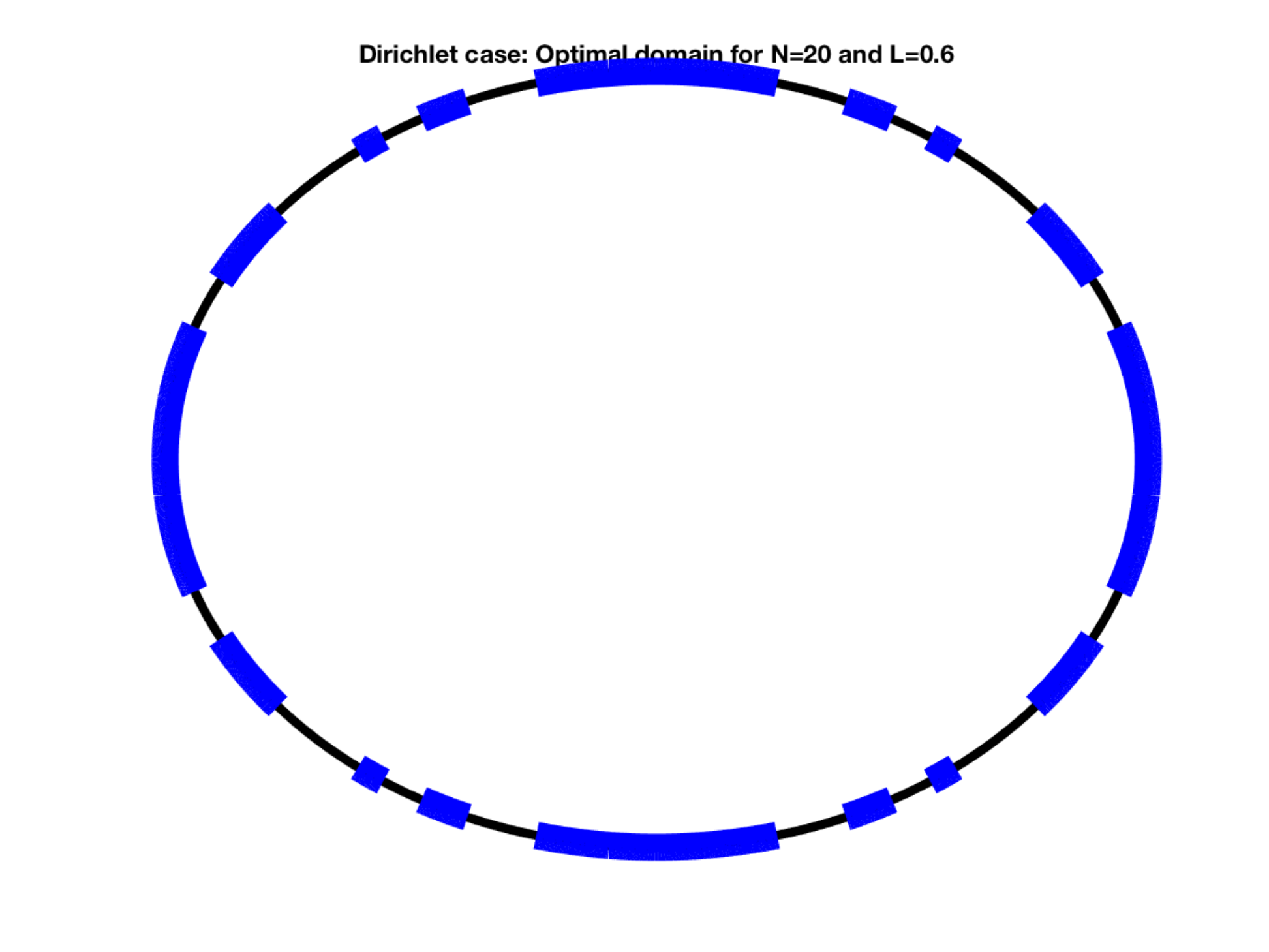}\\
\includegraphics[width=4cm,height=2.6cm]{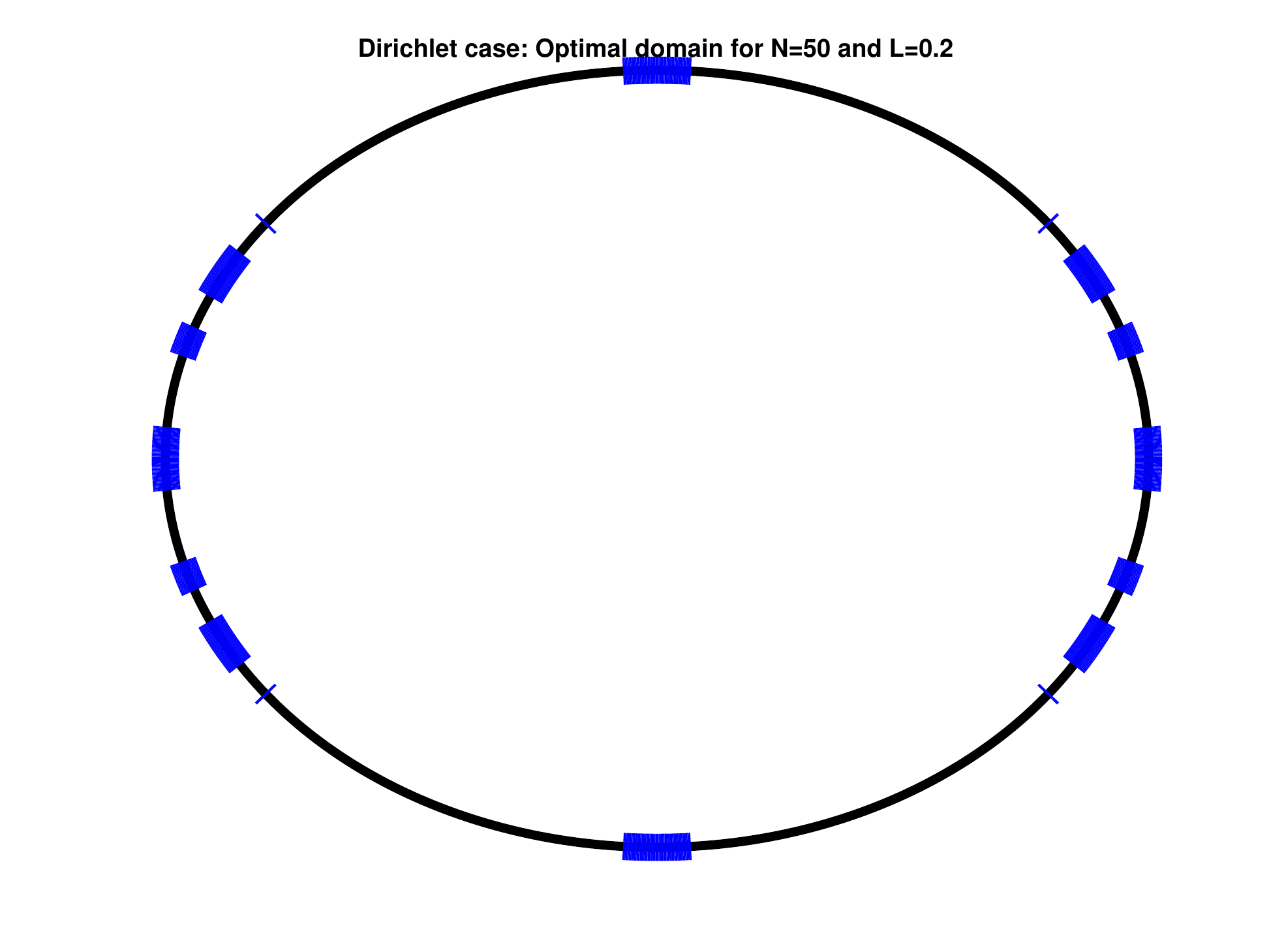}
\includegraphics[width=4cm,height=2.6cm]{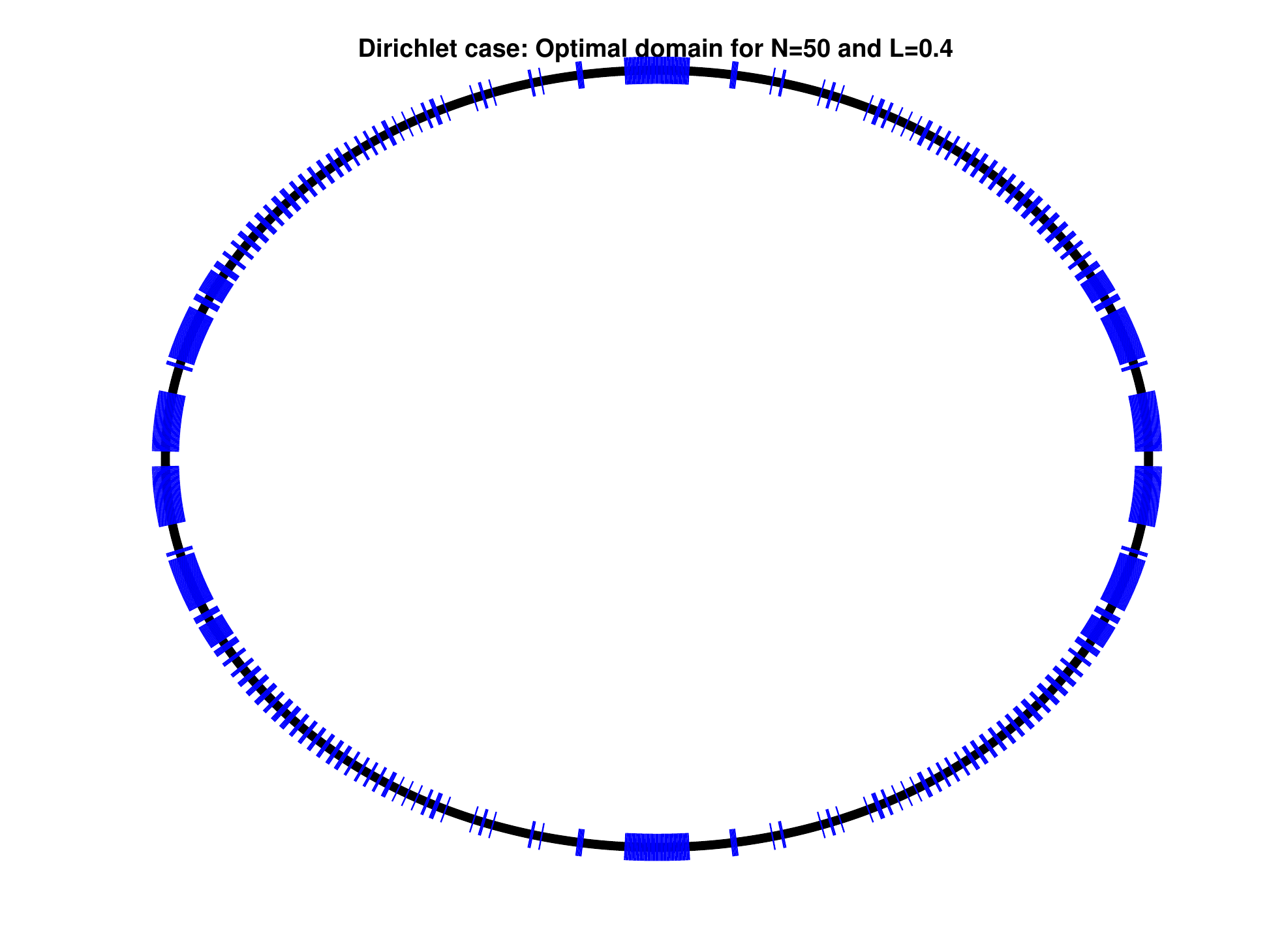}
\includegraphics[width=4cm,height=2.6cm]{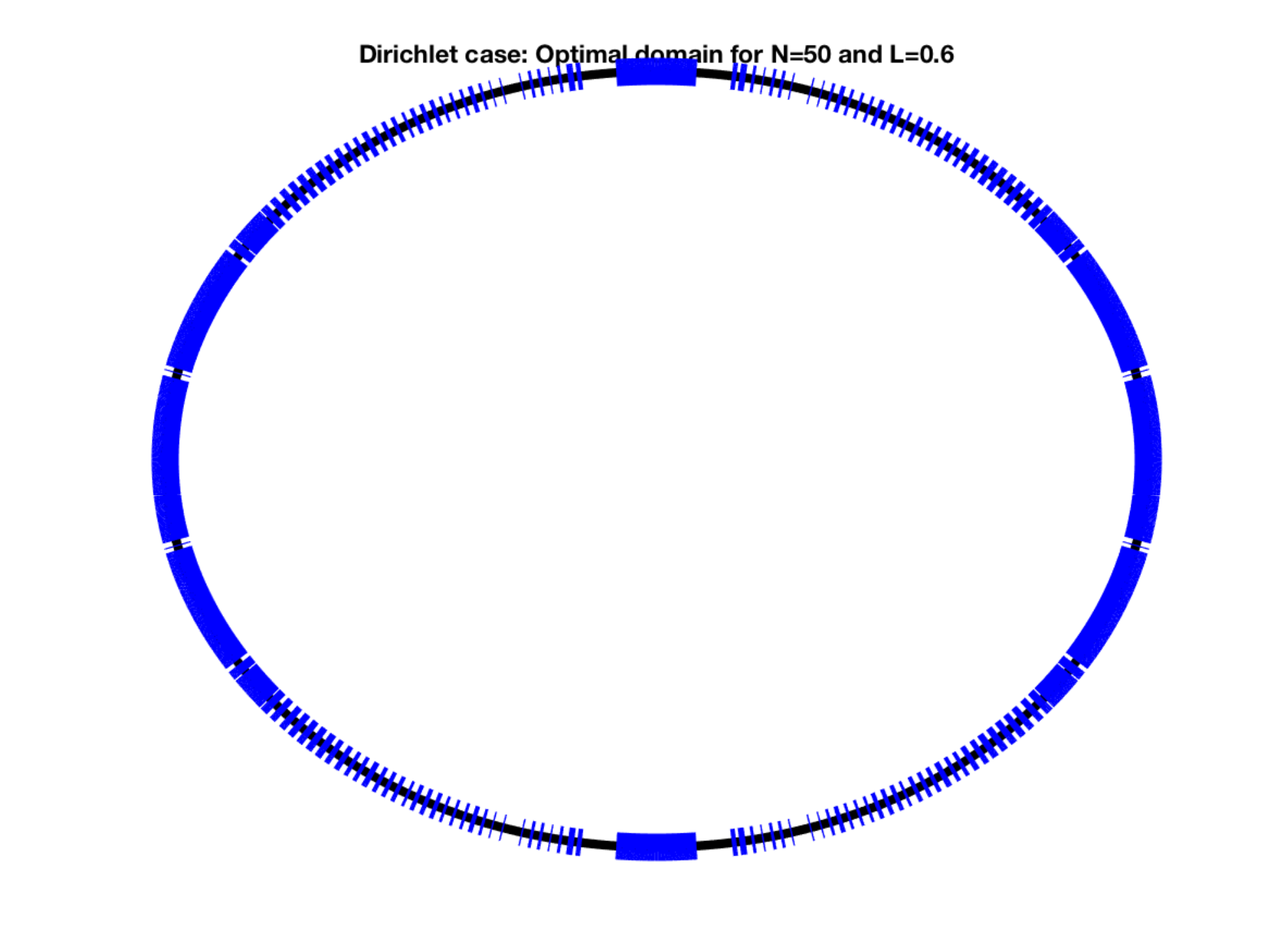}\\
\includegraphics[width=4cm,height=2.6cm]{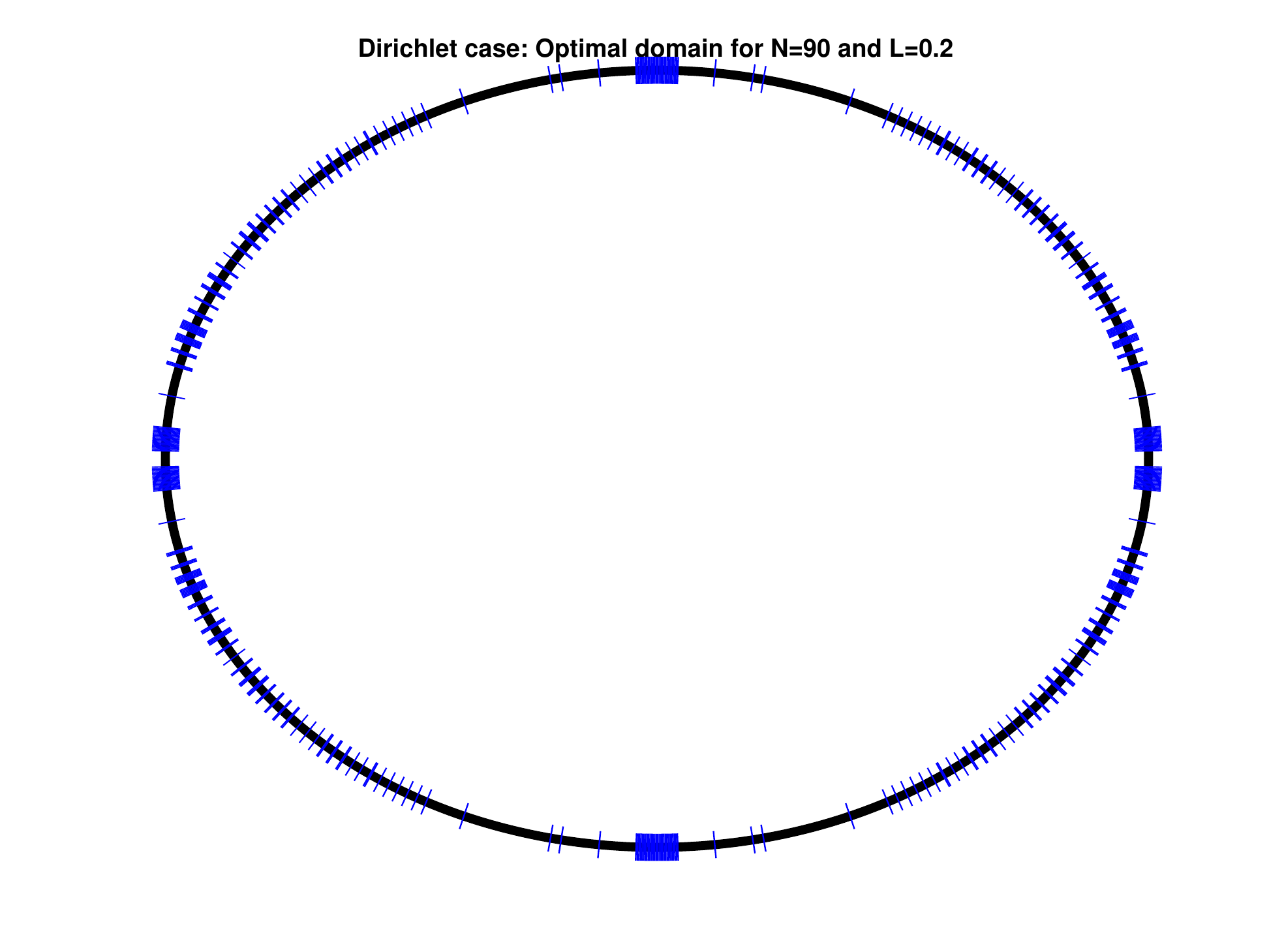}
\includegraphics[width=4cm,height=2.6cm]{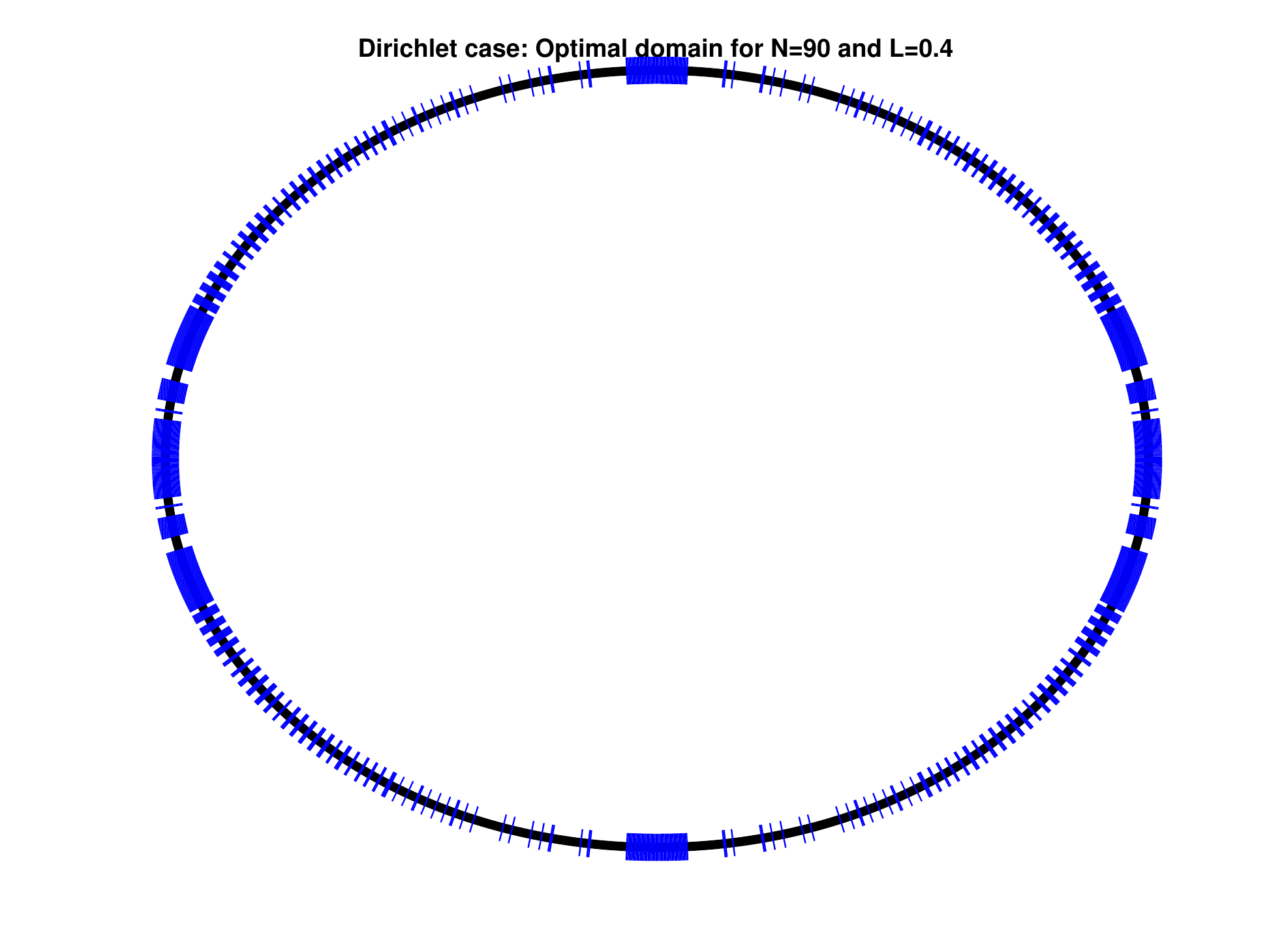}
\includegraphics[width=4cm,height=2.6cm]{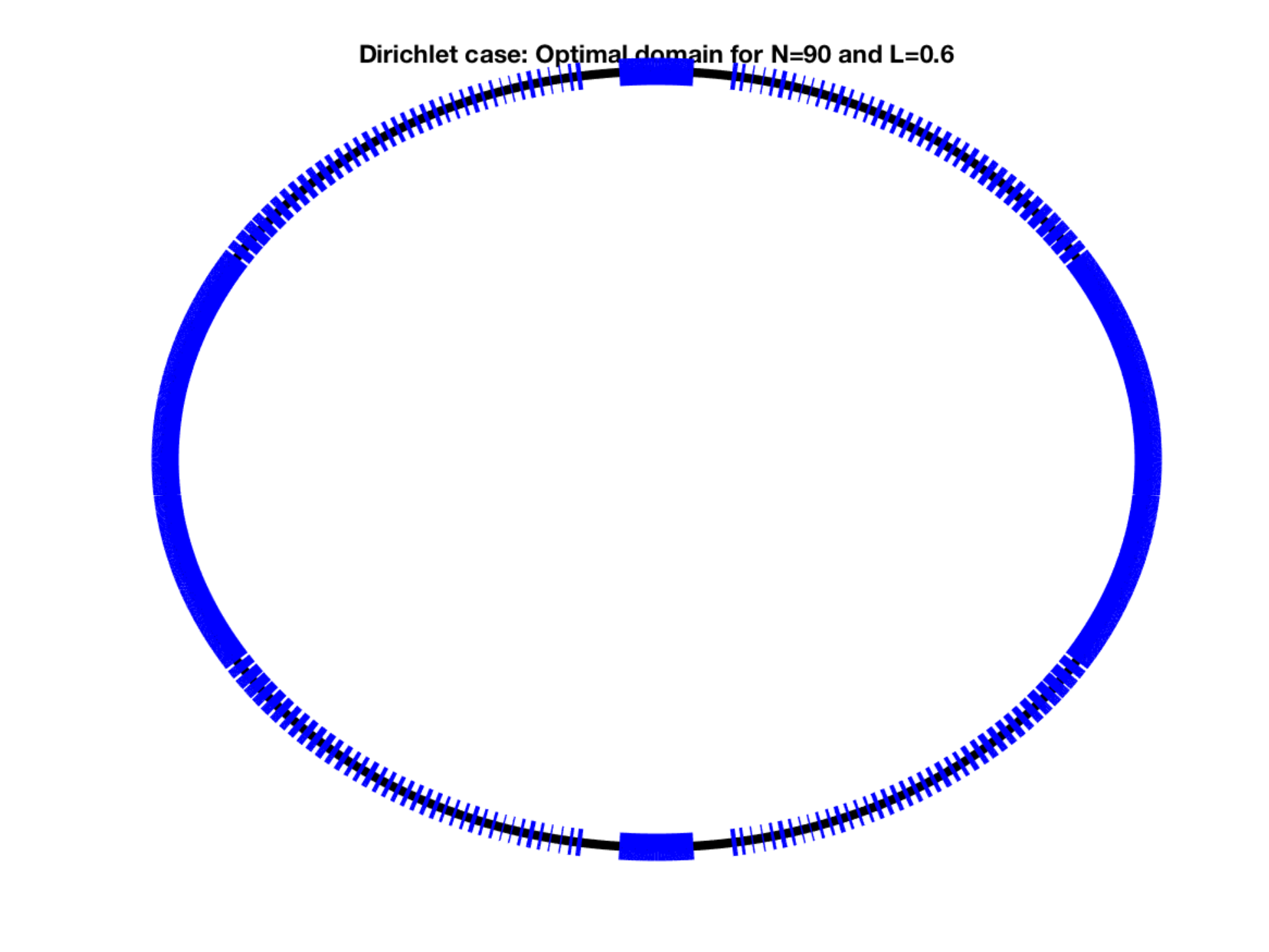}
\end{center}
\caption{$\Omega$ is the ellipse having as cartesian equation $x^2+y^2/2=1$ and $M=1$. Examples of maximizers $a_N^*$ for $J_N$. The bold line corresponds to the set $\Gamma=\{a_N^*=M\}$. Recall that $\int_{\partial \Omega}a \, d\Hn=LM\Hn(\partial\Omega)$ and $\Hn(\Gamma)=\frac{L+1}{2}\Hn(\partial \Omega )$. Row1: $L=-0.6$ (i.e. $\Hn(\Gamma)=0.2\Hn(\partial\Omega)$); row 2: $L=-0.2$ (i.e. $\Hn(\Gamma)=0.4\Hn(\partial\Omega)$); row 3: $L=0.2$ (i.e. $\Hn(\Gamma)=0.6\Hn(\partial\Omega)$). From left to right: $N=20$, $N=50$, $N=90$.\label{TruncEllipse}}
\end{figure}

\subsection{Proof of Theorem \ref{analytics}}\label{sec:prooftheoAnalytics}
Define the simplex 
$
\Pi_N=\left\{{\beta=(\beta_j)_{1\leq j\leq N}\in\R^N_+\text{ such that }\sum_{1\leq j\leq N}\beta_j = 1}\right\}.
$
Using standard arguments from convex analysis, one shows that the optimal design Problem \eqref{truncPbOpt} rewrites
$$
\sup_{ a\in\Ub}J_N(a)=\sup_{ a\in\Ub}\inf_{\displaystyle \beta\in\Pi_N}j(a,\beta)\quad \textnormal{where}\quad j(a,\beta)=\int_{\partial\Omega}a(x)\sum_{1\leq j\leq N}\frac{\beta_j}{\lambda_j}\left( \frac{\partial \phi_j}{\partial \nu}(x)\right)^2d\Hn .
$$
Combining the facts that $\Ub$ and $\Pi_N$ are convex, that the mapping $j$ is linear and continuous with respect to each variable (regarding the first variable, for $\beta\in \Pi_N$, the mapping $j(\cdot,\beta)$ is continuous for the weak star topology of $L^\infty$), one gets the existence of a saddle point $(a^*,\beta ^*)\in \Ub\times \R$ of $j$ solving this problem, according to the Sion minimax theorem (see \cite{Polak}). 

As a result, by introducing the so-called {\it switching function}
\begin{equation}\label{switchfunc}
\varphi^* = \sum_{1\leq j \leq N}\frac{\beta_j^*}{\lambda_j}\left( \frac{\partial \phi_j}{\partial \nu} \right)^2,
\end{equation}
we obtain that
$$
j(a^*,\beta^*)=\max_{a\in\Ub} j(a,\beta^*) \quad \textrm{with}\quad j(a,\beta^*)= \int_{\partial\Omega}a(x)\varphi^*(x)d\Hn .
$$
Solving the optimal design problem of the right-hand side is standard (see for instance \cite[Theorem 1]{PTZobspb1}) and leads to the following characterization of maximizers: there exists a positive real number $\Lambda$ such that $\{\varphi^* > \Lambda\}\subset \left\lbrace a^*=M\right\rbrace$, and $\{\varphi^* < \Lambda\}\subset \left\lbrace a^*=-M\right\rbrace$. 
Moreover, if the set $I = \lbrace -M < a^*< M\rbrace$ has a positive Hausdorff measure, there holds necessarily  $\varphi^*(x)=\Lambda$ a.e on $I$. 
Assuming that $\Omega $ has an analytic boundary, that is to say $\Omega\in\mathcal{D}$ yields that the squares of normal derivatives of eigenfunctions are analytic on $\partial\Omega$ according to \cite{morrey}.

With a slight abuse of notation, we denote by 1 the constant function equal to 1 everywhere on $\partial\Omega$. Introduce the family of functions $\mathcal{F}_N=\left\{1,\left( \frac{\partial \phi_1}{\partial \nu} \right)^2,\dots,\left( \frac{\partial \phi_N}{\partial \nu} \right)^2 \right\}$. The conclusion follows from the following proposition, where we establish that, for a generic $\Omega\in \A_\alpha$, the family $\mathcal{F}_N$ consists of linearly independent functions when restricted to any measurable subset of $\partial\Omega$ of positive $\Hn$-measure. Indeed, it implies that for a generic $\Omega\in \A_\alpha$, the level sets of $\varphi^*$ have zero $\Hn$-measure and therefore, every maximizer $a^*$ is {\it bang-bang}, i.e., equal to $-M$ or $M$ almost everywhere on $\partial\Omega$. 

\begin{proposition}\label{Generic}
Let $\alpha \in\N\backslash\{0,1\}$ and $N\in\N^*$. The set of all domains $\Omega\in \mathcal{A}_\alpha$ for which the family $\mathcal{F}_N$ consists of linearly independent functions is open and dense in $\A_\alpha$.
\end{proposition}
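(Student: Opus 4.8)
The plan is to prove the two topological claims separately, treating genericity through the real-analyticity of the relevant spectral quantities with respect to analytic deformations of $\Omega$.

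First, the openness. For $\Omega\in\A_\alpha$ I would encode linear independence of $\mathcal{F}_N$ through the Gram determinant
\[
G(\Omega)=\det\left(\int_{\partial\Omega}f_i f_k\,d\Hn\right)_{0\leq i,k\leq N},\qquad f_0=1,\ f_j=\left(\frac{\partial\phi_j}{\partial\nu}\right)^2,
\]
so that $\mathcal{F}_N$ is linearly independent if and only if $G(\Omega)>0$. At a domain where $\lambda_1,\dots,\lambda_N$ are simple, the eigenfunctions and their Neumann traces depend continuously (indeed analytically) on the $\mathcal{C}^\alpha$ deformation, hence $G$ is continuous there and $\{G>0\}$ is relatively open. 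The only subtlety is the possible presence of multiple eigenvalues, which makes the individual $\phi_j$ ambiguous; I would dispose of this by first recording that the set of $\Omega\in\A_\alpha$ with simple first $N$ eigenvalues is itself open and dense (a companion genericity statement, proved by the same analytic-deformation method), and by carrying the argument on this subset, on which $\mathcal{F}_N$ is unambiguously defined up to signs.

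For the density, fix $\Omega\in\A_\alpha$ and an analytic vector field $V$ on $\R^n$, and set $\Omega_t=(\mathrm{Id}+tV)(\Omega)$. For $|t|$ small these are analytic-boundary $\mathcal{C}^\alpha$ topological balls converging to $\Omega$ in $\Sigma_\alpha$, so they remain in $\A_\alpha$. By Kato's analytic perturbation theory the eigenvalues and, on the branches, the eigenfunctions, hence the pulled-back Neumann traces, depend real-analytically on $t$; consequently $t\mapsto G(\Omega_t)$ is a real-analytic function. If I can exhibit a single $V$ for which $G(\Omega_t)\not\equiv 0$, then $G(\Omega_t)$ vanishes only on a discrete set, and there are $t$ arbitrarily close to $0$ with $G(\Omega_t)>0$; this furnishes domains of $\A_\alpha$ arbitrarily close to $\Omega$ with $\mathcal{F}_N$ independent, whence density. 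It is convenient to organize this by induction on $N$, the point at each step being to push $(\partial\phi_N/\partial\nu)^2$ out of the fixed finite-dimensional space spanned by $1,(\partial\phi_1/\partial\nu)^2,\dots,(\partial\phi_{N-1}/\partial\nu)^2$ by a suitable deformation.

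The heart of the matter, and the step I expect to be the \emph{main obstacle}, is precisely the non-vanishing $G(\Omega_t)\not\equiv 0$. I would argue by contradiction: if $G(\Omega_t)\equiv 0$ along every analytic deformation, then for each such deformation there is an analytic family of nontrivial coefficients $(c_0(t),\dots,c_N(t))$ with $c_0(t)+\sum_{j=1}^N c_j(t)(\partial\phi_j^t/\partial\nu_t)^2\equiv 0$ on $\partial\Omega_t$. Differentiating this identity at $t=0$ and inserting the Hadamard shape derivatives — the eigenvalue derivative $\dot\lambda_j=-\int_{\partial\Omega}(\partial\phi_j/\partial\nu)^2\,(V\cdot\nu)\,d\Hn$ and the eigenfunction shape derivative $\phi_j'$, which solves $-\Delta\phi_j'=\lambda_j\phi_j'+\dot\lambda_j\phi_j$ in $\Omega$ with Dirichlet datum $\phi_j'=-(V\cdot\nu)\,\partial\phi_j/\partial\nu$ on $\partial\Omega$ — produces a new boundary relation whose extra terms depend on the deformation profile $h=V\cdot\nu$. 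Letting $h$ range over all admissible profiles generates, through the solution operator of these boundary value problems, a family of boundary functions that cannot be confined to the fixed finite-dimensional relation space; the contradiction is then extracted from a unique continuation (Holmgren) argument, using that the analytic eigenfunctions and their Neumann traces cannot satisfy an over-determined linear relation on a boundary set of positive $\Hn$-measure. Making this over-determination precise — in particular controlling the boundary trace of $\partial\phi_j'/\partial\nu$ and the curvature terms entering $\frac{d}{dt}(\partial\phi_j^t/\partial\nu_t)$ — is the technical core of the argument.
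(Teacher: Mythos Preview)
Your openness argument via the Gram determinant is fine and essentially parallels the paper's, which encodes independence through a point-evaluation determinant instead. The density argument, however, contains a genuine gap at precisely the step you flag as the main obstacle, and it takes a much harder route than necessary.

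The gap is the non-vanishing of $G(\Omega_t)$. You propose to differentiate a hypothetical identical relation and extract a contradiction from shape-derivative formulas plus unique continuation, but you do not actually carry this out, and it is not clear that it can be completed without substantial additional machinery. The shape derivative of $\partial\phi_j/\partial\nu$ on a moving boundary involves second normal derivatives and curvature terms; showing that, as $h=V\cdot\nu$ ranges over all admissible profiles, the resulting boundary functions cannot remain in a fixed $(N+1)$-dimensional subspace is not a consequence of Holmgren alone. At best you have reduced the problem to another one of comparable difficulty.

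The paper bypasses this entirely with a \emph{global} rather than local argument. It first establishes \emph{non-emptiness} of the good set: an orthotope $\prod_i(0,\mu_i\pi)$ with non-resonant side lengths has explicit eigenfunctions (products of sines), and one checks directly that the point-evaluation determinant is nonzero there; approximating the orthotope by $\mathcal{C}^\alpha$ topological balls transfers this to $\Sigma_\alpha$. Density then follows by connecting \emph{any} target $\Omega_1\in\Sigma_\alpha$ to this known good domain $\Omega_0$ by an analytic curve $t\mapsto\Lambda_t$ of diffeomorphisms along which the first $N$ eigenvalues remain simple (such curves are supplied by \cite{PS_ESAIM,Teytel}). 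Along the curve the determinant $\Psi(t)$ is real-analytic and $\Psi(0)\neq 0$, so $\Psi(t)\neq 0$ except for finitely many $t$, putting $\Omega_t\in\Sigma_P$ for $t$ arbitrarily close to $1$.

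The moral: rather than proving non-triviality of an analytic function by differentiating at the point of interest, it suffices to exhibit a \emph{single} point anywhere in parameter space where it is nonzero and then invoke connectedness of the parameter space via analytic paths. This replaces your shape-derivative computation by an elementary explicit example.
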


Finally, once we know that, for a generic $\Omega\in \mathcal{A}_\alpha$, every maximizer is {\it bang-bang}, the uniqueness follows from a convexity argument. Indeed, assume the existence of two maximizers $a_1$ and $a_2$. Thus, by concavity of $J_N$, any convex combination of $a_1$ and $a_2$ solves Problem \eqref{truncPbOpt} which is in contradiction with the fact that every maximizer is {\it bang-bang}.

\begin{proof}[Proof of Proposition \ref{Generic}]
In this proof, we follow the method used in \cite[Theorem 1]{PS_ESAIM}.

In the sequel and for every $1\leq j \leq N$, we will denote by $\phi_j$ the extension by $0$ of the $j$-th eigenfunction of the Dirichlet Laplacian to $\R^n$. 

Let us define the function $F:\R^{N^2+N}\longrightarrow\R$ by
$$ F(y_1,\cdots,y_{N(N+1)})=
\operatorname{det} \left(\begin{array}{cccc}
1 & y_1 & \cdots & y_N \\
\vdots & \vdots & & \vdots \\
1 & y_{N^2+1} &  \cdots &y_{N^2+N}\\
\end{array} \right). $$

Our strategy is based on the following remark: assume that the boundary $\partial\Omega$ is analytic. Then, by analyticity of $F$, the property of linear independence of functions of $\mathcal{F}_N$ is equivalent to the existence of $N$ points $x_1$, \dots, $x_N$ in $\partial\Omega$ such that 
\begin{equation}\label{Pn}
F \left( \left( \frac{\partial\phi_1}{\partial\nu}(x_1) \right)^2 , \dots , \left( \frac{\partial\phi_1}{\partial\nu}(x_N) \right)^2 , \dots , \left( \frac{\partial\phi_N}{\partial\nu}(x_{1}) \right)^2 , \dots,  \left( \frac{\partial\phi_N}{\partial\nu}(x_{N}) \right)^2 \right) \neq 0 .
\end{equation}
Indeed, by analyticity of $\partial\Omega$ and according to \cite{morrey}, the squares of normal derivatives of eigenfunctions are analytic on $\partial\Omega$ and therefore, the function 
$$
(x_1,\dots,x_{N})\mapsto F \left( \left( \frac{\partial\phi_1}{\partial\nu}(x_1) \right)^2 , \dots ,  \left( \frac{\partial\phi_1}{\partial\nu}(x_N) \right)^2 , \dots , \left( \frac{\partial\phi_N}{\partial\nu}(x_{1}) \right)^2 , \dots , \left( \frac{\partial\phi_N}{\partial\nu}(x_{N}) \right)^2 \right)
$$
is analytic on $(\partial\Omega)^N$.

As a consequence of these preliminary remarks, the proof of Proposition \ref{Generic} follows from the following lemma. For technical reasons, we need to handle families of domains satisfying moreover a simplicity assumption on the $N$ first eigenvalues. Indeed, forgetting this assumption would allow crossings of eigenvalues branches along the considered paths of domains and would not ensure good regularity properties of the eigenfunctions with respect to the domain $\Omega$.
\begin{lemma}\label{lem:1026}
The set $\Sigma_{P}$ of domains $\Omega$ in $\Sigma_\alpha$ for which the property 
\begin{center}
$\P$ : ``the $N$ first eigenvalues of $\Omega$ are simple and there exists $(x_1,\dots,x_{N})\in (\partial\Omega)^N$ such that \eqref{Pn} holds true'' 
\end{center}
is satisfied, is open and dense in $\Sigma_\alpha$. 
\end{lemma}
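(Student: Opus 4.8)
The plan is to prove Lemma~\ref{lem:1026} by a standard genericity argument combining openness (from continuity of eigenfunctions under domain perturbation) with density (obtained by an explicit analytic deformation that breaks any degeneracy). First I would establish \emph{openness}. Fix $\Omega_0\in\Sigma_P$. Simplicity of the first $N$ eigenvalues is an open property: by classical perturbation theory (Kato), when the eigenvalues $\lambda_1,\dots,\lambda_N$ are simple they depend continuously (indeed analytically) on the domain, and the associated eigenfunctions $\phi_j$, together with their Neumann traces $\partial\phi_j/\partial\nu$, vary continuously in an appropriate norm under $\mathcal{C}^\alpha$-perturbations of $\partial\Omega$. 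Since property $\P$ also requires the existence of points $x_1,\dots,x_N\in\partial\Omega$ at which the determinant $F(\cdots)\neq 0$, and since $F$ is continuous and the traces $(\partial\phi_j/\partial\nu)^2$ depend continuously on both the point and the domain, a nonzero value of $F$ at $\Omega_0$ persists (at nearby points on the perturbed boundary) for all $\Omega$ close to $\Omega_0$ in $\Sigma_\alpha$. This gives an open neighborhood of $\Omega_0$ inside $\Sigma_P$.

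Next I would prove \emph{density}, which is the heart of the matter. Let $\Omega_0\in\Sigma_\alpha$ be arbitrary; I want to approximate it in the $\Ck$ topology by a domain in $\Sigma_P$. By a preliminary perturbation one may assume the first $N$ eigenvalues are simple, since simplicity is generic (this is itself classical, going back to Uhlenbeck). The remaining task is to ensure that the family $\mathcal{F}_N=\{1,(\partial\phi_1/\partial\nu)^2,\dots,(\partial\phi_N/\partial\nu)^2\}$ is linearly independent on $\partial\Omega$, i.e. that $F$ is not identically zero on $(\partial\Omega)^N$. I would argue by contradiction: suppose no small perturbation achieves linear independence, so along every admissible analytic path of domains $t\mapsto\Omega_t$ the family $\mathcal{F}_N$ remains linearly dependent. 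This means there exist coefficients, not all zero, giving a relation $\sum_j c_j(\partial\phi_j/\partial\nu)^2 = c_0$ on $\partial\Omega_t$ for each $t$. The strategy is to differentiate this relation with respect to the deformation parameter $t$ and use the shape-derivative (Hadamard) formulas for the eigenfunctions and their normal derivatives to derive an overdetermined system; choosing the deformation field $V$ supported near a well-chosen boundary point, one forces the coefficients $c_j$ to vanish successively, yielding the desired contradiction.

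The main obstacle will be this density step, specifically controlling how the Neumann traces $\partial\phi_j/\partial\nu$ deform and showing that one can always find a local deformation breaking a putative linear dependence. The key technical input is the Hadamard formula expressing the derivative of $\phi_j$ (and hence of $(\partial\phi_j/\partial\nu)^2$) in terms of the normal displacement $\langle V,\nu\rangle$ on $\partial\Omega$; since these shape derivatives involve solving auxiliary Dirichlet problems with boundary data depending on $V$ and on the $\phi_k$, one must verify that the map from deformation fields to the resulting variations of the traces is rich enough to move the vector $(F$-evaluations$)$ off the zero set. Here the analyticity assumption $\Omega\in\mathcal{D}$ is crucial in two ways: it guarantees via \cite{morrey} that the traces are real-analytic on $\partial\Omega$ (so that a relation holding on a positive-measure set propagates to all of $\partial\Omega$, and conversely a single point of nonvanishing of $F$ suffices), and it lets us pass from ``$F\not\equiv0$ on $(\partial\Omega)^N$'' to ``$F\neq0$ at some explicit $N$-tuple of points.''

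I would organize the deformation argument following the scheme of \cite[Theorem~1]{PS_ESAIM}: assume the analytic set $\{F=0\}$ contains all of $(\partial\Omega_t)^N$ for $t$ in an interval, differentiate in $t$, and peel off the coefficients one at a time by localizing the support of the normal deformation. The simplicity hypothesis in $\P$ is exactly what is needed to guarantee that the branches $t\mapsto\phi_j(\Omega_t)$ and $t\mapsto\partial\phi_j/\partial\nu$ are analytic (no eigenvalue crossings), so the differentiation is legitimate. Once a single perturbed domain in $\Sigma_P$ is produced arbitrarily close to $\Omega_0$, density follows, and combined with the openness established first this proves that $\Sigma_P$ is open and dense in $\Sigma_\alpha$, whence Proposition~\ref{Generic} and thereby Theorem~\ref{analytics}.
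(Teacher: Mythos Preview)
Your openness argument matches the paper's Step~2. The divergence is in the density argument, and there your proposal both differs from the paper and carries a real gap.

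The paper does \emph{not} argue density by local shape-derivative perturbation. Instead it first proves \emph{nonemptiness} (Step~1): a non-resonant orthotope satisfies $\P$, and one approximates it by elements of $\Sigma_\alpha$. With a single $\Omega_0\in\Sigma_P$ in hand, density (Step~3) is almost free: given arbitrary $\Omega_1\in\Sigma_\alpha$, cite \cite{PS_ESAIM,Teytel} for an analytic path $t\mapsto\Lambda_t$ of diffeomorphisms with $\Lambda_0=\mathrm{Id}$, $\Lambda_1(\Omega_0)=\Omega_1$, and simple spectrum along $(0,1)$. Then the scalar map
\[
\Psi(t)=F\Big((\nabla\phi_1^t(\Lambda^t(x_1)))^2,\dots,(\nabla\phi_N^t(\Lambda^t(x_N)))^2\Big)
\]
is real-analytic in $t$ with $\Psi(0)\neq 0$, hence vanishes at only finitely many $t\in[0,1]$. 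So $\Omega_t\in\Sigma_P$ for $t$ arbitrarily close to $1$. No Hadamard formulas, no differentiation of a dependence relation, no peeling off coefficients.

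Your route---assume a persistent linear relation $\sum_j c_j(t)(\partial\phi_j^t/\partial\nu)^2\equiv c_0(t)$ along every analytic deformation, differentiate, and localize $V$ to force $c_j=0$---is not obviously wrong, but the step ``one forces the coefficients $c_j$ to vanish successively'' is asserted rather than proved. You would need (i) to control how the $c_j(t)$ vary (they are only defined projectively), (ii) explicit shape derivatives of the \emph{squared Neumann trace}, which bring in second-order boundary data, and (iii) an argument that the map $V\mapsto$ (variation of the traces) is rich enough to separate the $N$ functions. None of this is supplied, and it is substantially more delicate than the paper's global analytic-path trick. The conceptual point you are missing is that once one explicit domain in $\Sigma_P$ is exhibited, analyticity along connecting paths propagates the property automatically; there is no need to manufacture, for each $\Omega$, a bespoke perturbation that breaks a hypothetical degeneracy.
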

We will then infer that the set of the domains $\Omega\in \A_\alpha$ for which the family $\mathcal{F}_N$ consists of linearly independent functions is open in $\A_\alpha$ for the induced topology of $\mathcal{A}_\alpha$ (inherited from $\Ck$). Moreover, the density of this set in $\A_\alpha$ is a consequence of the next approximation result.

Admitting temporarily Lemma \ref{lem:1026}, we now provide the final (standard) argument to conclude the proof. It remains to prove that for every $\Omega\in \Sigma_\alpha$, there exists a domain $\Omega'$ in $\mathcal{A}_\alpha$, arbitrarily close to $\Omega$ for the topology on $ \Sigma_\alpha$. There exist $T\in \Ck$ and an analytic mapping $T'\in \Ck$ such that $\Omega=T(B(0,1))$, $\Omega'=T'(B(0,1))$ and $T$ is arbitrarily close to $T'$ for the $\Sigma_\alpha$-topology. Since $T^{-1}$ (resp. $T'^{-1}$) maps $\Omega$ (resp. $\Omega'$) to $B(0,1)$, the Laplace-Dirichlet eigenvalue problem on $\Omega$ (resp $\Omega'$) is equivalent to the eigenvalue problem for a Laplace-Beltrami operator on $B(0,1)$ relative to the pullback metric $\mathfrak{g}=(g_{ij} (T))_{1\leq i,j\leq n}$ (resp., $\mathfrak{g}'=(g'_{ij} (T))_{1\leq i,j\leq n}$). These operators on $B(0,1)$ are elliptic of second order with analytic coefficients with respect to the metrics. Since the $N$ first eigenvalues of $\Omega$ and $\Omega'$ are simple, standard arguments (see, e.g., \cite{Kato}) about parametric families of operators show that the $N$ first eigenvalues of $\Omega'$ are arbitrarily close to those of $\Omega$, and the $N$ first eigenfunctions of $\Omega'$ are arbitrarily close to those of $\Omega$ for the $\mathcal{C}^\alpha$ topology. As a consequence,  \eqref{Pn} holds also true for $\Omega'$ provided that $T'$ be close enough to $T$ for the $\Ck$-topology.

The desired conclusion follows.
\end{proof}

\begin{proof}[Proof of Lemma \ref{lem:1026}]
Let us show that $ \Sigma_P $ is nonempty, open and dense in $\Sigma_\alpha$. 
 
\paragraph{Step 1: $ \Sigma_P$ is nonempty.}
Let $(\mu_1,\cdots,\mu_d)$ be a non-resonant sequence\footnote{It means that every nontrivial rational linear combination of finitely many of its elements is different from zero} of positive real numbers and $\Omega$ be the orthotope $\displaystyle \Pi_{i=1}^n (0,\mu_i \pi)$. 
Then easy computations based on the fact that for $K=(k_1,\ldots,k_d)\in {\N^*}^d$, the (un-normalized) Laplacian-Dirichlet eigenfunction are the functions
$$
\Omega\ni (x_1,\ldots,x_d)\mapsto 
\prod_{i=1}^d\sin \left(\frac{k_i x_i}{\mu_i}\right),
$$
show that $\Omega$ satisfies $\P$.
Moreover, as proved in \cite{PS_ESAIM}, there exists a sequence $(\Omega_k)_{k\in\N} \in \Sigma_\alpha^\N$ compactly converging\footnote{Recall that a sequence of domains $(\Omega_k)_{k\in \N}\in\Sigma_\alpha^\N$ is said to compactly converge to $\Omega$ if for every compact $K\subset (\Omega\cup\overline{\Omega}^c)$, there exists $k_0\in\N$ such that one has $K\subset(\Omega_k\cup\overline{\Omega}_k^c)$ for every $k\geq k_0$. } to $\Omega$. For $k\in\N$, let us denote by $(\phi_j^k)_{j\in\N^*}$ the Hilbert basis (in $L^2(\Omega_k)$) made of the Laplace-Dirichlet operator eigenfunctions in $\Omega_k$.

Fix $j\in \N^*$. According to \cite{Daners} and since each eigenvalue of $\Omega_k$ is simple, the sequence $(\phi_j^k)_{k\in\N}$ converges uniformly to $\phi_j$ on $\R^n$. 
Consequently $(\Delta\phi_j^k)_{k\in\N}$ converges locally to $\Delta \phi_j$ in $\mathcal{C}^0(\R^n)$. 
Using the elliptic regularity (see \cite{Brezis}), we then infer that $ \phi_j^k $ converges to $\phi_j$ in $H^2(\R^n)$
\footnote{Note that each eigenfunction $\phi_j^k$ is supported by the bounded set $\Omega_k$ which allows to apply the stanfdard elliptic regularity results.}. Denote by $\Lambda^k$ the element of $\Ck$ transforming $\Omega$ into $\Omega^k$.
Since $\Lambda^k $ converges to $\operatorname{Id}$ in $\Ck$, one has
$$ 
\lim_{k\to +\infty}\Vert \phi_j^k \circ \Lambda^k -\phi_j \Vert_{H^2(\R^n)} = 0.
$$

As a consequence, up to a subsequence, $(\nabla \phi_j^k \circ \Lambda^k)_{k\in\N}$ converges to $\nabla \phi_j$ almost everywhere in $\R^n$. 
Finally, according to \cite{morrey}, since $\Omega \in\A_\alpha$, for every $k\in\N$, the vectorial function $\nabla \phi_j^k$ is smooth in $\overline{\Omega}$ and in particular continuous.
It follows that for $k$ large enough, $\Omega_k$ satisfies $\P$.

\paragraph{Step 2: $ \Sigma_P$ is open in $\Sigma_\alpha$.}
Fix $\Omega \in\Sigma_P$, a choice of eigenfunctions $\phi_1$, \dots, $\phi_N$ and $N$ points $x_1$, \dots, $x_N$ such that $\P$ holds true. 
Assume by contradiction that there exists a sequence $(\Omega_k)_{k\in \N}$  of domains in $\Sigma_\alpha\backslash \Sigma_P$ converging to $\Omega$ in $\Sigma_\alpha$. 
Recall that, according for instance to \cite{Daners}, denoting by $\lambda_j^k$ the $j$-th eigenvalue on $\Omega_k$ and by $\phi_j^k$ the associated eigenfunction extended by 0 on $\R^n$, 
the sequence  $(\lambda_j^k)_{k\in\N}$ converges to $\lambda_j$. As a result, the $N$ first eigenvalues of  $\Omega_k$ are simple provided that $k$ be large enough.

In a second time, let us denote by $\Lambda^k$ the element of $\Ck$ transforming $\Omega$ into $\Omega_k$. For $k\in\N$, introduce the analogous of $\mathcal{F}_N$ for $\Omega_k$, namely 
\begin{equation}\label{DefineFNK}
\mathcal{F}_N^k = \left\lbrace 1,( \nabla \phi_1^k\circ\Lambda^k)^2,\cdots,( \nabla \phi_N^k\circ\Lambda^k)^2 \right\rbrace .
\end{equation}
We claim that 
\begin{equation}\label{CVUnif}
\text{each element of }\mathcal{F}_N^k \text{ converges pointwise to } \mathcal{F}_N \text{ in $\Omega$, as $k$ tends to $+\infty$}.
\end{equation}
Indeed, this is obtained by using a similar argument as previously, by reinterpreting the convergence of $(\Omega_k)_{k\in\N}$ in $\Sigma_\alpha$ in terms of a parametric family of operator with smooth coefficients.

It follows that for $k$ large enough, one has
$$
F( (\nabla \phi_1^k (\Lambda_j^k(x_1)))^2 ,\cdots, (\nabla \phi_N^k (\Lambda_j^k(x_N)))^2 ) \neq 0.
$$
since $F( (\nabla \phi_1^k (\Lambda_j^k(x_1)))^2 ,\cdots, (\nabla \phi_N^k (\Lambda_j^k(x_N)))^2 )$ converges to $ F( (\nabla \phi_1 (x_1))^2 ,\cdots, (\nabla \phi_N (x_N))^2 ) \neq 0$.

One gets a contradiction since one has $\Omega_k$ belongs to $\Sigma_P$.

\medskip

\paragraph{Step 3: $ \Sigma_P$ is dense in $\Sigma_\alpha$.}
Fix $\Omega_0\in\Sigma_P$ with the corresponding $x_1,\cdots, x_N$, and $\Omega_1\in\Sigma_\alpha$.  
According to \cite{PS_ESAIM,Teytel}, there exists an analytic curve $[0,1]\ni t\mapsto \Lambda_t$ of $\mathcal{C}^m$-diffeomorphisms such that $\Lambda_0$ is equal to the identity, 
$\Lambda_1(\Omega_0)=\Omega_1$ and  
every domain $\Omega_t=\Lambda_t(\Omega_0)$ has simple spectrum for $t$ in the open interval $(0,1)$ and for every $j\in\llbracket 1,N \rrbracket$, the mapping $t\mapsto \lambda_j^t$ is analytic.. 
Let us introduce $\mathcal{F}_N^t$ the analogous of $\mathcal{F}_N$ for $\Omega_t$.

Moreover, by using analytic perturbation theory arguments (see \cite{Kato}), we also know that there exists a choice of eigenfunctions $(\phi_j^t)_{j\in\llbracket 1,N \rrbracket}$ such that,
$ \phi_j^t \circ \Lambda_j^t$ varies analytically with respect to $t$ in $\mathcal{C}^\alpha(\overline{\Omega_0})$.
Consequently $\Delta \phi_j^t \circ \Lambda^t$ is analytic with respect to $t$ in $\mathcal{C}^\alpha(\overline{\Omega_0})$.
In particular, it follows that $\phi_j^t\circ \Lambda^t$ is analytic with respect to $t$ in $H^2(\Omega_0)$ (by using elliptic regularity).
Using the same arguments as in step 1, one has that $\nabla \phi_j^t\circ \Lambda^t$ is analytic with respect to $t$ successively in $L^2(\Omega_0)$ and almost everywhere in $\Omega_0$.

We then infer that the mapping
$$ \Psi : t \mapsto F(1, (\nabla \phi_1^t (\Lambda^t(x_1)))^2 ,\cdots, (\nabla \phi_N^t (\Lambda^t(x_N)))^2 )$$
is analytic from $[0,1]$ to $\R$. Since $\Psi(0)\neq 0$, we get that, except for a finite number of values in $[0,1]$, $\Psi(t)\neq 0$, and thus $\Omega_t$ is in $\Sigma_P$. 
In particular $\Sigma_P$ is dense in $\Sigma_\alpha$.
\end{proof}

%%%%%%%%%%%%%%%%%%%%%%%%%%%%%%%%%%%%%%%%%%%%%%%%%%%%%%%%%%%%%%%%%%%%%

\section{Solving of the optimal design problem \eqref{defJrelax}}
\label{section2}

\subsection{Preliminaries}

Let us first prove the existence of solutions for Problem \eqref{defJrelax}.

\begin{lemma}\label{lem:existPinfty}
Assume that $\Omega$ is either convex or has a $\mathcal{C}^{1,1}$ boundary. Then, Problem \eqref{defJrelax} has at least a solution.
\end{lemma}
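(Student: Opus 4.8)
The plan is to prove existence via the direct method of the calculus of variations, exploiting the compactness of the admissible set $\Ub$ in a suitable weak topology together with upper semicontinuity of the functional $J_\infty$. First I would observe that $\Ub$ is bounded in $L^\infty(\partial\Omega)$ (by $M$) and convex, hence by the Banach--Alaoglu theorem it is sequentially compact for the weak-star topology of $L^\infty(\partial\Omega)$; one must also check that the integral constraint $\int_{\partial\Omega} a\, d\Hn = LM\Hn(\partial\Omega)$ and the pointwise bounds $-M\le a\le M$ are both preserved under weak-star limits, which follows because these constraints are defined by continuous linear functionals and closed convex conditions respectively. Thus $\Ub$ is weak-star sequentially compact.

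Next I would take a maximizing sequence $(a_k)_{k\in\N^*}$ in $\Ub$, so that $J_\infty(a_k) \to \sup_{a\in\Ub} J_\infty(a)$, and extract a subsequence converging weak-star to some $a^*\in\Ub$. The crux is then to establish that
\begin{equation*}
J_\infty(a^*) \geq \limsup_{k\to+\infty} J_\infty(a_k),
\end{equation*}
i.e. that $J_\infty$ is weak-star upper semicontinuous, which would force $a^*$ to be a maximizer. For each fixed $j$, the map $a\mapsto \frac{1}{\lambda_j}\int_{\partial\Omega} a\,(\partial\phi_j/\partial\nu)^2\, d\Hn$ is weak-star continuous, since $(\partial\phi_j/\partial\nu)^2\in L^1(\partial\Omega)$ (as noted in the excerpt, the Neumann traces are in $L^2(\partial\Omega)$ under the convexity or $\mathcal{C}^{1,1}$ assumption). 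The key structural fact is that $J_\infty$ is an infimum over $j\in\N^*$ of weak-star continuous affine functionals, and an infimum of a family of upper semicontinuous functions is itself upper semicontinuous; hence $J_\infty$ is weak-star upper semicontinuous on $\Ub$, delivering the desired inequality.

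The main obstacle I anticipate is justifying that the value is not $-\infty$ and that the upper semicontinuity argument is not vacuous: one needs $J_\infty$ to be finite on $\Ub$, which is exactly what the boundedness estimate $-\infty < J_\infty(a)$ from Theorem \ref{prop:metz1756} provides (the uniform convergence of Ces\`aro means of $\frac{1}{\lambda_j}(\partial\phi_j/\partial\nu)^2$ to a constant underlies this). A subtle point is that $J_\infty$ being an infimum over infinitely many indices does not automatically inherit the \emph{lower} bound uniformly, so I would lean on Theorem \ref{prop:metz1756} to guarantee finiteness. Once finiteness and upper semicontinuity are in hand, the direct method closes the argument: the weak-star limit $a^*$ lies in the (weak-star closed) admissible set $\Ub$ and attains the supremum, so Problem \eqref{defJrelax} has at least one solution. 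I would remark that, unlike the continuity available for each individual mode, only upper semicontinuity survives the infimum, which is precisely why the direct method yields a \emph{maximizer} rather than requiring full continuity.
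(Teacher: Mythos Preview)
Your proposal is correct and follows essentially the same approach as the paper: weak-star compactness of $\Ub$ in $L^\infty(\partial\Omega)$ combined with upper semicontinuity of $J_\infty$ as an infimum of weak-star continuous linear functionals. The paper's proof is more terse and does not explicitly extract a maximizing sequence nor invoke finiteness (which is in fact not needed for existence), but the core argument is identical.
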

\begin{proof}
For every $j\in\N^*$, the functional $a\in\overline{\mathcal{U}}_L\mapsto \frac{1}{\lambda_j}\int_{\partial \Omega} a\left(\frac{\partial \phi_j}{\partial \nu}\right)^2 \, d\Hn$ is linear and continuous for the $L^\infty (\partial\Omega,[-M,M])$ weak-star topology. 
Thus, the functional $\Ub\ni a\mapsto J_\infty(a)$ is upper semi-continuous as the infimum of continuous linear functionals. Since $\Ub $ is compact for the $L^\infty(\partial\Omega,[-M,M])$ weak-star topology, the existence of an optimal density $a^*$ in $\Ub$ follows.
\end{proof}

The optimal design problem we will investigate in the sequel is motivated by the following observation about the maximizers given by \eqref{def:tildeax0}.

\begin{lemma}{($L^\infty$-norm of maximizers)}\label{prop:Linftynormmax}
Assume that $\Omega$ has a $\mathcal{C}^1$ boundary. Then, one has
\begin{equation}\label{dm11h43}
\frac{\operatorname{diam}(\Omega)}{2}\leq \inf_{x_0\in\R^n} \sup_{x\in\overline{\Omega}} |\left\langle x-x_0 , \nu(x) \right\rangle|=\inf_{x_0\in\R^n} \sup_{x\in\overline{\Omega}} \Vert x-x_0\Vert\leq R(\Omega),
\end{equation}
where $\operatorname{diam}(\Omega)$ (resp. $R(\Omega)$) denotes the diameter (resp. the circumradius) of $\Omega$.

As a consequence, the lowest $L^\infty$-norm among all maximizers of Problem \eqref{maxJapropsssign} satisfies
$$
\frac{\operatorname{diam}(\Omega)p_0}{2n|\Omega|}\leq \inf_{x_0\in \R^n}\Vert \tilde{a}_{x_0}\Vert_{L^\infty(\partial\Omega)}\leq \frac{p_0}{n|\Omega|}.
$$
\end{lemma}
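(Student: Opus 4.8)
The plan is to establish the chain \eqref{dm11h43} first, and then to read off the bounds on the $L^\infty$-norm by a direct substitution. Throughout, I set $f(x_0)=\sup_{x\in\partial\Omega}|\langle x-x_0,\nu(x)\rangle|$ and $g(x_0)=\sup_{x\in\partial\Omega}\Vert x-x_0\Vert$. Two preliminary observations are in order: both suprema are attained, since $\partial\Omega$ is compact and the integrands are continuous; and for a bounded domain the furthest point of $\overline\Omega$ from any $x_0$ always lies on $\partial\Omega$, so that $g(x_0)=\sup_{x\in\overline\Omega}\Vert x-x_0\Vert=\ell_{\partial\Omega}(x_0)$ (this reconciles the two ways the sup is written in \eqref{dm11h43}, the inner-product version being read over $\partial\Omega$ where $\nu$ is defined).

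The only genuinely nontrivial point is the middle equality, which I would obtain as a pointwise identity $f(x_0)=g(x_0)$ valid for every $x_0\in\R^n$. The inequality $f(x_0)\leq g(x_0)$ is immediate from Cauchy--Schwarz, since $\Vert\nu(x)\Vert=1$. For the reverse inequality, let $x^*\in\partial\Omega$ realize the maximum defining $g(x_0)$, i.e. a furthest point of $\partial\Omega$ from $x_0$. Because $\partial\Omega$ is a $\mathcal{C}^1$ closed hypersurface (with no boundary of its own), $x^*$ is an interior maximizer of the smooth map $x\mapsto\Vert x-x_0\Vert$ restricted to $\partial\Omega$, so its tangential differential vanishes there; equivalently, the gradient $(x^*-x_0)/\Vert x^*-x_0\Vert$ is orthogonal to $T_{x^*}\partial\Omega$, hence collinear with $\nu(x^*)$. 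This gives $|\langle x^*-x_0,\nu(x^*)\rangle|=\Vert x^*-x_0\Vert=g(x_0)$, forcing $f(x_0)\geq g(x_0)$ and thus $f=g$. The regularity hypothesis on $\partial\Omega$ enters exactly here, to guarantee that the furthest point is a critical point of the distance function with a well-defined normal; I expect this furthest-point argument to be the main (and essentially only) obstacle, the rest being elementary.

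It then remains to bracket $\inf_{x_0}g(x_0)$. For the lower bound, fix $x_0$ and any two points $p,q\in\overline\Omega$; the triangle inequality gives $\Vert p-q\Vert\leq\Vert p-x_0\Vert+\Vert x_0-q\Vert\leq 2g(x_0)$, and taking the supremum over $p,q$ yields $\operatorname{diam}(\Omega)\leq 2g(x_0)$, whence $\operatorname{diam}(\Omega)/2\leq\inf_{x_0}g(x_0)$. For the upper bound, let $x_c$ be the center of a smallest ball containing $\Omega$; then $\Vert x-x_c\Vert\leq R(\Omega)$ for all $x\in\overline\Omega$, so $g(x_c)\leq R(\Omega)$ and therefore $\inf_{x_0}g(x_0)\leq R(\Omega)$. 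Combined with the identity $f=g$, this proves \eqref{dm11h43}.

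Finally, for the consequence I would simply note that the maximizers $\tilde a_{x_0}$ of Problem \eqref{maxJapropsssign}, given by \eqref{def:tildeax0}, satisfy $\Vert\tilde a_{x_0}\Vert_{L^\infty(\partial\Omega)}=\frac{p_0}{n|\Omega|}\,f(x_0)=\frac{p_0}{n|\Omega|}\,g(x_0)$. Taking the infimum over $x_0$ and inserting the two bounds just established, together with the normalization $R(\Omega)=1$ from \eqref{eq:circum}, yields $\frac{\operatorname{diam}(\Omega)p_0}{2n|\Omega|}\leq\inf_{x_0\in\R^n}\Vert\tilde a_{x_0}\Vert_{L^\infty(\partial\Omega)}\leq\frac{p_0}{n|\Omega|}$, which is the claimed estimate.
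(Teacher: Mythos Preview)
Your proof is correct and follows the same general outline as the paper. The equality $f=g$ and the upper bound by $R(\Omega)$ are handled essentially as in the paper (Cauchy--Schwarz for one inequality, a first-order optimality argument for the reverse, and the circumcenter as test point); the only difference is that you pick $x^*$ as the maximizer of the distance $\Vert x-x_0\Vert$ and read off collinearity with $\nu(x^*)$, whereas the paper picks $x^*$ as the maximizer of the inner product $\langle x-x_0,\nu(x)\rangle$ and argues that the tangential component $t(x^*)$ must vanish---your choice is arguably the cleaner of the two.

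The one genuinely different step is the lower bound $\operatorname{diam}(\Omega)/2\leq g(x_0)$. The paper takes two points $A,B$ realizing the diameter, uses the $\mathcal{C}^1$ regularity to identify $\nu(A)=-\nu(B)=\overrightarrow{BA}/\Vert BA\Vert$, and then bounds $\max_{x\in\{A,B\}}|\langle x-x_0,\nu(x)\rangle|$ from below by a short convexity computation. Your argument bypasses this entirely with the triangle inequality $\Vert p-q\Vert\leq 2g(x_0)$, which is shorter and does not even use the $\mathcal{C}^1$ hypothesis at this step. Both reach the same conclusion; your route is more elementary, while the paper's route makes the role of the normal at diameter endpoints explicit (a fact that is reused later in the proof of Proposition~\ref{prop:ellcircum}).
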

\begin{proof}
Let us first show the equality. Note that 
$$
\inf_{x_0\in\R^n} \sup_{x\in\overline{\Omega}} |\left\langle x-x_0 , \nu(x) \right\rangle|\leq \inf_{x_0\in\R^n} \sup_{x\in\overline{\Omega}} \Vert x-x_0\Vert,
$$
by maximality and by using the Cauchy-Schwarz inequality. Let $x_0\in \R^n$ and let $x^*$ solve the problem $\sup_{x\in\overline{\Omega}} \left\langle x-x_0 , \nu(x) \right\rangle$. Then, one has necessarily $\nu(x^*)=(x^*-x_0)/\Vert x^*-x_0\Vert$ and $\max_{x\in\overline{\Omega}} \left\langle x-x_0 , \nu(x) \right\rangle=\Vert x^*-x_0\Vert$. Indeed, this is easily inferred by writing
$$
x^*-x_0= \left\langle x^*-x_0 , \nu(x^*) \right\rangle\nu(x^*)+t(x^*),
$$
where $t(x^*)$ denotes a vector of the tangent hyperplane to $\Omega$ at $x^*$. Furthermore, the first order optimality conditions write $\langle x^*-x_0,h\rangle \leq 0$ for every element $h$ of the cone of admissible perturbations. Finally, noting that $h=t(x^*)$ is an admissible perturbation yields that necessarily $t(x^*)=0$ and the expected conclusion follows. 

\medskip

To prove the right-hand side inequality, it suffices to choose for $x_0$ the center $O$ of the circumradius and we get
$$ 
 \inf_{x_0\in \R^n} \max_{x\in\overline{\Omega}} \Vert x-x_0 \Vert \leq \max_{x\in\overline{\Omega}} \Vert x-O \Vert \leq R(\Omega).
$$
To prove the left-hand side inequality, let us introduce two points $A$ and $B$ such that $\operatorname{diam}(\Omega)=[AB]$. 
Since $\Omega$ has a $\mathcal{C}^1$ boundary, it follows that $\nu(A)=-\nu(B)=\frac{\overrightarrow{BA}}{BA}$, and therefore
\begin{eqnarray*}
\inf_{x_0\in\R^n} \max_{x\in\overline{\Omega}} |\left\langle x-x_0 , \nu(x) \right\rangle| &\geq & \inf_{x_0\in\R^n}\max_{x\in \{A,B\}} |\left\langle x-x_0 , \nu(x) \right\rangle|\\
&=& \inf_{x_0\in\R^n}\max \left\{|\left\langle A-x_0 , \nu(A) \right\rangle|,|\left\langle B-x_0 , \nu(A) \right\rangle|\right\}\\
&=&  \inf_{x_0\in\R^n}\max_{t\in [0,1]} \left\langle t(A-x_0)-(1-t)( B-x_0) , \nu(A) \right\rangle\\
&\geq & \frac{1}{2}\left\langle A-B, \nu(A) \right\rangle=\frac{\operatorname{diam}(\Omega)}{2}.
\end{eqnarray*}

Finally, the last claim follows directly from the expression of $\tilde a_{x_0}$ given by \eqref{def:tildeax0}. 
\end{proof}
As a consequence, the lowest $L^\infty$-norm among all maximizers can be either small or large depending on the shape of $\Omega$. Indeed, denoting by $\mathcal{D}$ the set of bounded connected domains $\Omega$ of $\R^n$ having as circumradius $R(\Omega)=1$,  there holds
$$
\inf_{\Omega\in \mathcal{D}}\frac{1}{|\Omega|}=\frac{1}{|B_n|}\qquad \text{and}\qquad \sup_{\Omega\in \mathcal{D}}\frac{1}{|\Omega|}=+\infty ,
$$
where $B_n$ denotes the Euclidean $n$-dimensional ball with circumradius 1. The first equality is a consequence of the standard isoperimetric inequality and the second one is obtained by considering for $\Omega$ a sequence of particular lenses (namely, ellipses/ellipsoids having for circumradius 1 and a semi-minor axis tending to 0, see Fig. \ref{Fig:Lentille}). 

\medskip

It follows that, given $\Omega$ in $\mathcal{D}$, the solutions $\tilde a_{x_0}$ of Problem \eqref{maxJapropsssign} may have an arbitrarily large $L^\infty$ norm. In view of reducing the complexity of maximizers, it is relevant to deal with density functions $a(\cdot)$ that are essentially bounded by a positive constant $M$. This justifies to consider Problems \eqref{defJrelax} and \eqref{mainPbOpt}.

\subsection{Optimality of Rellich functions}\label{sec:soveRelaxopt}

Next results are devoted to the computation of the optimal value for Problem \eqref{defJrelax} under adequate geometric assumptions on $\Omega$. 

\begin{theorem}\label{OptimalValue}
Let $\Omega$ be a bounded connected  domain of $\R^n$ either convex  or with a $\mathcal{C}^{1,1}$ boundary. 
Let $a^*$ be a solution of Problem \eqref{defJrelax}. Then:
\begin{itemize}
\item one has necessarily $J_\infty(a^*)\leq \frac{2L\Hn (\partial\Omega)}{n|\Omega|}$.
\item one has 
$$
\max_{a\in\Ub}J_\infty(a)=\frac{2L\Hn (\partial\Omega)}{n|\Omega|}.
$$
if, and only if $L\in [-L^{c}_{n},L^{c}_{n}]$, where $L^{c}_{n}$ is given by 
\begin{equation}\label{md11h30}
L^{c}_{n}=\min \left\{ 1,\frac{n|\Omega|}{\Hn (\partial\Omega) \displaystyle \inf_{x_0\in\R^n}\ell_{\partial\Omega}(x_0)}\right\},\text{ with }\ell_{\partial\Omega}(x_0)=\underset{x\in \partial\Omega}{\operatorname{sup\ ess}}\left|\langle x-x_0,\nu(x)\rangle\right|.
\end{equation}
\end{itemize}
Furthermore, in the case where $\partial\Omega$ is $\mathcal{C}^{1,1}$, then the expression of $L^{c}_{n}$ simplifies into \eqref{Lstar}.
\end{theorem}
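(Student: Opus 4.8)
The plan is to reduce everything to the linear upper bound \eqref{estimJa} of Theorem \ref{prop:metz1756} and to analyse its equality case. Throughout, write $g_j=\frac{1}{\lambda_j}\left(\frac{\partial\phi_j}{\partial\nu}\right)^2\geq 0$ and $c=\frac{2}{n|\Omega|}$. The first bullet is immediate: every $a\in\Ub$ has the \emph{same} integral $\int_{\partial\Omega}a\,d\Hn=LM\Hn(\partial\Omega)$, so \eqref{estimJa} gives $J_\infty(a)\leq B:=\frac{2}{n|\Omega|}\int_{\partial\Omega}a\,d\Hn$, a constant on $\Ub$ equal to the announced optimal value; in particular $J_\infty(a^*)\leq B$. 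Consequently $\max_{\Ub}J_\infty=B$ holds \emph{if and only if} some $a\in\Ub$ realizes equality in \eqref{estimJa}, and the whole second bullet becomes a question about that equality case. Here I would use the key ingredient of Theorem \ref{prop:metz1756}, the uniform convergence of the Cesàro means $\frac{1}{N}\sum_{j\leq N}g_j\to c$, which yields $J_\infty(a)=\inf_j\int_{\partial\Omega}a\,g_j\,d\Hn\leq\int_{\partial\Omega}a\,c\,d\Hn=B$; hence $a$ realizes equality exactly when $\int_{\partial\Omega}a\,g_j\,d\Hn\geq B$ for every $j$, equivalently (passing to the weak-$\ast$ closed convex hull $\mathcal K=\overline{\operatorname{conv}}\{g_j\}$, which contains $c$ as the limit of the Cesàro means) when $c$ \emph{minimizes} the linear map $\psi\mapsto\int_{\partial\Omega}a\psi\,d\Hn$ over $\mathcal K$.

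For the sufficiency direction I would observe that any Rellich density $\tilde a_{x_0}$ of \eqref{defax0} satisfies $\int_{\partial\Omega}\tilde a_{x_0}g_j\,d\Hn=B$ for \emph{every} $j$ — this is exactly the Rellich identity \eqref{Tao} — so it realizes equality, and by the divergence theorem $\int_{\partial\Omega}\langle x-x_0,\nu\rangle\,d\Hn=n|\Omega|$ it automatically meets the integral constraint. Thus $\tilde a_{x_0}\in\Ub$ iff $\|\tilde a_{x_0}\|_{\infty}\leq M$, i.e. iff $\frac{|L|M\Hn(\partial\Omega)}{n|\Omega|}\ell_{\partial\Omega}(x_0)\leq M$ with $\ell_{\partial\Omega}(x_0)=\operatorname{ess\,sup}_{\partial\Omega}|\langle x-x_0,\nu\rangle|$. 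Optimizing in $x_0$, an admissible Rellich density exists iff $|L|\leq\frac{n|\Omega|}{\Hn(\partial\Omega)\inf_{x_0}\ell_{\partial\Omega}(x_0)}$; together with the hard constraint $|L|\leq 1$ this reads $|L|\leq L^c_n$, and then $\max_{\Ub}J_\infty=J_\infty(\tilde a_{x_0})=B$.

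The necessity direction is where the difficulty lies, and it is the step I expect to be the main obstacle. Assume $\max_{\Ub}J_\infty=B$ and pick a maximizer $a^*$ (it exists by Lemma \ref{lem:existPinfty}); by the reduction above, $c$ minimizes $\psi\mapsto\int_{\partial\Omega}a^*\psi\,d\Hn$ over $\mathcal K$. Reading the Rellich identity for \emph{all} $x_0$ shows that every $\psi\in\mathcal K$ obeys the $n+1$ linear relations $\int_{\partial\Omega}\nu\psi\,d\Hn=0$ and $\int_{\partial\Omega}\langle x,\nu\rangle\psi\,d\Hn=2$, whose dual test functions span precisely the Rellich subspace $V_R=\{s\langle x,\nu(x)\rangle+\langle b,\nu(x)\rangle:\ s\in\R,\ b\in\R^n\}$. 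The plan is then to prove that $c$ lies in the \emph{relative interior} of $\mathcal K$: a linear functional minimized at a relative interior point is constant on the affine hull, which would force $a^*\in V_R$, hence (after matching the integral constraint) $a^*=\tilde a_{x_0}$ for some $x_0$, so that an admissible Rellich density exists and $|L|\leq L^c_n$ by the previous paragraph. The delicate point — the genuine technical heart — is exactly this relative-interior/non-degeneracy statement: the uniform Cesàro convergence only places $c$ \emph{in} $\mathcal K$, so one must rule out that $B$ is attained at some $a^*$ for which $\int_{\partial\Omega}a^*g_j\,d\Hn>B$ along a positive-density set of modes while the Cesàro average still tends to $B$. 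Controlling this is where the fine geometry of $\partial\Omega$ (an equidistribution property of the normalized boundary traces $g_j$) must be invoked, and I expect this to require substantially more than the elementary averaging used in the other steps.

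Finally, for the $\mathcal C^{1,1}$ simplification I would invoke the computation already carried out in Lemma \ref{prop:Linftynormmax}: at a boundary point farthest from $x_0$ the outward normal is radial, whence $\operatorname{ess\,sup}_{\partial\Omega}|\langle x-x_0,\nu\rangle|=\max_{x\in\partial\Omega}\|x-x_0\|$. Substituting this identity for $\ell_{\partial\Omega}(x_0)$ turns the expression \eqref{md11h30} of $L^c_n$ into the form \eqref{Lstar}, completing the proof.
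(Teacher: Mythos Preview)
Your argument for the first bullet, for the sufficiency half of the second bullet, and for the $\mathcal C^{1,1}$ simplification is correct and coincides with the paper's: the upper bound is read off from \eqref{estimJa}, the divergence identity $\int_{\partial\Omega}\langle x-x_0,\nu\rangle\,d\Hn=n|\Omega|$ handles the integral constraint of $\tilde a_{x_0}$, admissibility reduces to $|L|\,\ell_{\partial\Omega}(x_0)\leq n|\Omega|/\Hn(\partial\Omega)$, and Lemma~\ref{prop:Linftynormmax} gives the last simplification.

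The necessity direction is where your proposal diverges, and the relative-interior scheme has a genuine gap. Even granting that $c$ is a relative-interior point of $\mathcal K=\overline{\operatorname{conv}}\{g_j\}$, the conclusion is only that $a^*$ lies in the annihilator $A$ of the linear span of $\{g_j-c\}$; the Rellich identities give $V_R\subset A$, not $A\subset V_R$, and you would need the affine hull of $\mathcal K$ to have codimension exactly $n+1$ to close the argument. More decisively, the target ``$a^*\in V_R$'' is simply false in general: on the unit disk $L^c_2=1$, so $\max_{\Ub}J_\infty=B$ for every $L$, yet by \eqref{munich0640} every $a\in\Ub$ whose even cosine Fourier coefficients vanish is a maximizer (e.g.\ $a=\cos(3\theta)$ when $L=0$, $M=1$), and almost none of these lie in $V_R=\operatorname{span}\{1,\cos\theta,\sin\theta\}$. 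So the correct goal is not that \emph{a given} maximizer is Rellich, but that \emph{some} Rellich density is admissible, and your machinery is aimed at the wrong statement. For what it is worth, the paper's own proof does not argue this direction in detail either: it asserts, ``still using Theorem~\ref{prop:metz1756}'', that $\max_{\Ub}J_\infty=B$ holds if and only if an admissible Rellich function exists, and then carries out the admissibility computation you reproduced --- but Theorem~\ref{prop:metz1756} only exhibits Rellich functions as maximizers of \eqref{maxJapropsssign} and does not characterize the equality case of \eqref{estimJa}.
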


This proof of this result strongly uses Theorem \ref{prop:metz1756}. Both proofs are postponed to Section \ref{sec:proofpropOptvalue}. Let us comment on the condition \eqref{md11h30} and the function $\ell_{\partial\Omega}$ that is involved. This condition is equivalent to the existence of a Rellich function $\tilde a_{x_0}$ (see Def. \ref{def:Rellichadmf}) with $x_0\in \overline{\Omega}$, belonging to the set $\Ub$.
Moreover, one has the following (partial) characterization, also proved in Section \ref{sec:proofpropOptvalue}.

\begin{proposition} \label{prop:ellcircum}
Assume that $\Omega$ has a $\mathcal{C}^1$ boundary. Then the expression of $L^c_{n}$ simplifies into \eqref{Lstar}. Moreover, if ones assumes that the intersection between $\overline{\Omega}$ and its circumsphere reduces to two points, one has
$$
\inf_{x_0\in \R^n}\ell_{\partial\Omega}(x_0)=R(\Omega).
$$
\end{proposition}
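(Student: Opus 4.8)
Assume $\Omega$ has a $\mathcal{C}^1$ boundary. Then the expression of $L^c_{n}$ simplifies into \eqref{Lstar}; moreover, if the intersection between $\overline{\Omega}$ and its circumsphere reduces to two points, one has $\inf_{x_0\in \R^n}\ell_{\partial\Omega}(x_0)=R(\Omega)$.

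The plan is as follows. The first claim — that the general formula \eqref{md11h30} for $L^c_n$ collapses to the $\mathcal{C}^{1,1}$-type expression \eqref{Lstar} — is immediate from Lemma \ref{prop:Linftynormmax}: the equality displayed in \eqref{dm11h43} shows that, under a $\mathcal{C}^1$ boundary assumption, the essential supremum defining $\ell_{\partial\Omega}(x_0)$ can be replaced by a genuine supremum of $\Vert x-x_0\Vert$, namely $\ell_{\partial\Omega}(x_0)=\sup_{x\in\overline{\Omega}}\Vert x-x_0\Vert=\max_{x\in\partial\Omega}\Vert x-x_0\Vert$, which is exactly the quantity appearing in \eqref{Lstar}. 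I would simply invoke \eqref{dm11h43} for each fixed $x_0$ and take the infimum over $x_0$.

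For the second claim, I would compute $\inf_{x_0}\ell_{\partial\Omega}(x_0)$ by a two-sided argument, now that $\ell_{\partial\Omega}(x_0)=\max_{x\in\partial\Omega}\Vert x-x_0\Vert$ is the radius of the smallest enclosing ball centered at $x_0$. The upper bound $\inf_{x_0}\ell_{\partial\Omega}(x_0)\leq R(\Omega)$ follows by choosing $x_0$ to be the circumcenter $O$, exactly as in the right-hand inequality of \eqref{dm11h43}: by definition of the circumradius, $\max_{x\in\overline{\Omega}}\Vert x-O\Vert=R(\Omega)$. For the reverse inequality, I would use the hypothesis that $\overline{\Omega}\cap\partial B(O,R(\Omega))$ consists of exactly two points, say $A$ and $B$. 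These two points realize the circumradius, i.e. $\Vert A-O\Vert=\Vert B-O\Vert=R(\Omega)$, and the minimal-enclosing-ball characterization of the circumcenter forces $O$ to lie on the segment $[AB]$ and in fact to be its midpoint, so that $A$ and $B$ are antipodal on the circumsphere with $\Vert A-B\Vert=2R(\Omega)$. The key point is then a midpoint/triangle-inequality estimate: for \emph{any} $x_0\in\R^n$,
\[
\ell_{\partial\Omega}(x_0)\geq \max\{\Vert A-x_0\Vert,\Vert B-x_0\Vert\}\geq \tfrac12\big(\Vert A-x_0\Vert+\Vert B-x_0\Vert\big)\geq \tfrac12\Vert A-B\Vert=R(\Omega),
\]
where the last step is the triangle inequality. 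Taking the infimum over $x_0$ yields $\inf_{x_0}\ell_{\partial\Omega}(x_0)\geq R(\Omega)$, and combined with the upper bound this gives the claimed equality.

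The step requiring the most care is justifying that the two contact points $A$, $B$ are genuinely antipodal, i.e. that $O$ is their midpoint and $\Vert A-B\Vert=2R(\Omega)$; this is where the hypothesis ``the intersection reduces to two points'' is really used. The cleanest route is via the standard characterization of the smallest enclosing ball: its center $O$ must lie in the convex hull of the contact set $\overline{\Omega}\cap\partial B(O,R(\Omega))$ (otherwise one could translate the center slightly toward that hull and strictly shrink the enclosing radius, contradicting minimality of $R(\Omega)$). When the contact set is the pair $\{A,B\}$, its convex hull is the segment $[AB]$, forcing $O\in[AB]$; together with $\Vert A-O\Vert=\Vert B-O\Vert=R(\Omega)$ this pins $O$ to the midpoint and gives antipodality. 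I would state this minimal-enclosing-ball fact explicitly (it is elementary and classical) and apply it; the remaining inequalities are routine triangle-inequality manipulations.
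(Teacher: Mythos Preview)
Your proof is correct and follows essentially the same route as the paper's. The paper's proof is a one-liner that invokes Lemma~\ref{prop:Linftynormmax}: under the two-point hypothesis it observes that any diameter of $\Omega$ is also a diameter of the circumsphere, i.e.\ $\operatorname{diam}(\Omega)=2R(\Omega)$, so the two-sided estimate \eqref{dm11h43} collapses to an equality. Your argument unpacks exactly this: you justify antipodality of the two contact points via the convex-hull characterization of the circumcenter (which the paper leaves implicit), and your triangle-inequality lower bound $\ell_{\partial\Omega}(x_0)\geq \tfrac12\Vert A-B\Vert$ is precisely the content of the left inequality in \eqref{dm11h43} once $\Vert A-B\Vert=\operatorname{diam}(\Omega)$.
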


Notice that the conclusion of Proposition \ref{prop:ellcircum} is not true in general. Indeed, if one considers a domain made of a flat triangle whose edges have been smoothed with unit circumradius (for instance obtained from the triangle plotted on Fig. \ref{Fig:triangledisk}), by choosing a particular test point $x_0$ inside $\Omega$, one has $\inf_{x_0\in \R^n}\ell_{\partial\Omega}(x_0)\leq \operatorname{diam}(\Omega)<R(\Omega)$.
\begin{figure}[h!]
\begin{center}
\includegraphics[height=5cm]{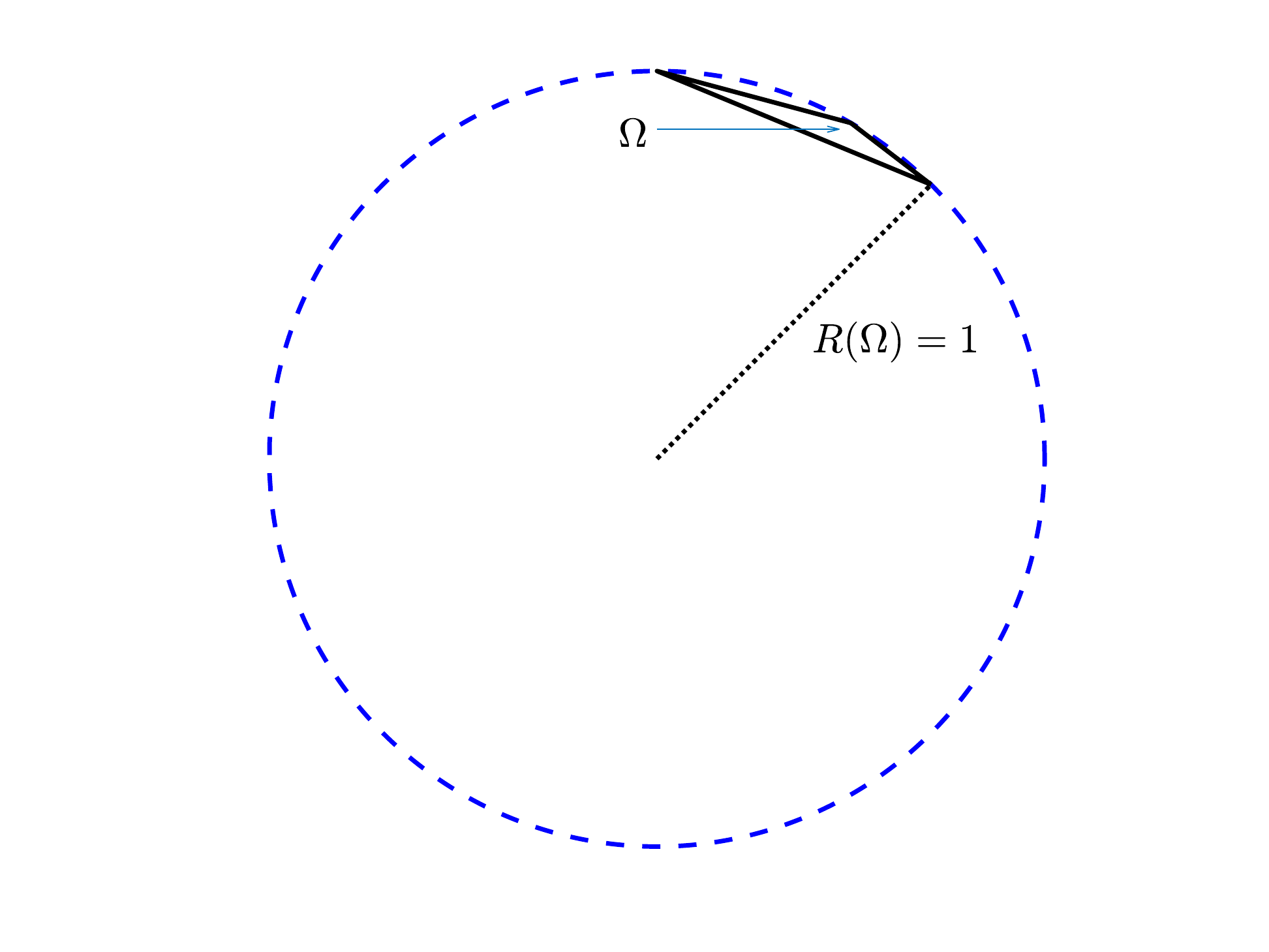}
\caption{A flat triangle with unit circumradius. \label{Fig:triangledisk}}
\end{center}
\end{figure}

In Section \ref{Invest}, we will investigate the particular cases where $\Omega$ is either a rectangle, a disk or an angular sector in $\R^2$. In particular, we will explicitly compute at the same time the critical value $L^c_{n}$ in such cases as well as the optimal value for the convexified problem \eqref{defJrelax} when $L>L^c_{n}$, in other words when the assumptions of Theorem \ref{OptimalValue} are not satisfied anymore. 
\subsection{Proofs of Theorems \ref{prop:metz1756}, \ref{OptimalValue} and Proposition \ref{prop:ellcircum}}\label{sec:proofpropOptvalue}
\paragraph{Proof of Theorem \ref{prop:metz1756}.}
Note that each integral $\int_{ \partial\Omega} a\left(\frac{\partial \phi_j}{\partial \nu}\right)^2 \, d\Hn$ in the definition of $J_\infty$ is well defined and is finite under the above regularity assumptions on $\partial\Omega$\footnote{Since $\Omega$ is convex or has a $\mathcal{C}^{1,1}$ boundary, the outward unit normal $\nu$ is defined almost everywhere, the eigenfunctions $\phi_j$ belong to $H^2(\Omega)$ and their Neumann trace $\partial\phi_j/\partial\nu$ belongs to $L^2(\partial\Omega)$ for any $j\geq 1$. Hence $a\left(\frac{\partial \phi_j}{\partial \nu}\right)^2\in L^1(\partial\Omega)$ for every $a\in L^\infty(\partial \Omega)$.}.

According to the Rellich identity \eqref{Tao}, there holds
$$
\sup_{a\in L^\infty(\partial\Omega)}J_\infty(a)-\frac{2}{n|\Omega|}\int_{\partial\Omega}a\, d\Hn\geq J_\infty(a_{x_0})-\frac{2}{n|\Omega|}\int_{\partial\Omega}a_{x_0}\, d\Hn=0,
$$
by using that $\int_{\partial\Omega} \langle x-x_0,\nu(x)\rangle\, d\Hn(x)=\int_\Omega \operatorname{div}(x-x_0)\, dx=n|\Omega|$.

In order to prove the converse inequality, we consider Ces\`aro means of eigenfunctions. Indeed, introducing the family $(\mu_j)_{j\in\N^*}$ of measures 
$$
\mu_j(a)=\frac{1}{\lambda_j}\int_{\partial\Omega}a\left(\frac{\partial \phi_j}{\partial \nu}\right)^2 \, d\Hn
$$
we have
$$
\sup_{a\in L^\infty(\partial\Omega)}\inf_{j\in \N^*}\mu_j(a) =\sup_{a\in L^\infty(\partial\Omega)}\inf_{\substack{(\alpha_j)\in \ell^1(\R_+)\\ \sum_{j=1}^{+\infty}\alpha_j=1}} \sum_{j=1}^{+\infty}\frac{\alpha_j}{\lambda_j}\int_{\partial \Omega} a\left(\frac{\partial \phi_j}{\partial \nu}\right)^2 \, d\Hn
$$
and by considering particular choices of sequences $(\alpha_j)_{j\in\N^*}$, we get
$$
\sup_{a\in L^\infty(\partial\Omega)}\inf_{j\in \N^*}\mu_j(a) \leq \sup_{a\in \Ub}\inf_{N\geq 1} \frac{1}{N}\sum_{j=1}^{N}\mu_j(a).
$$
According to \cite[Theorem 7]{MR520983}, the sequence 
$$
\left(\frac{1}{N}\sum_{j=1}^N\frac{1}{\lambda_j} \left( \frac{\partial \phi_j}{\partial \nu} \right)^2\right)_{N\in\N^*}
$$
is uniformly bounded and converges uniformly to some positive constant $C_\Omega$ on every compact subset of $\partial\Omega$ for the $C^0$-topology, and in particular weakly in $L^1(\Omega)$. As a consequence, considering $a\in L^\infty(\partial\Omega)$,  we infer that
$$
\inf_{N\geq 1} \frac{1}{N}\sum_{j=1}^{N}\mu_j(a)\leq \lim_{N\to +\infty}\frac{1}{N}\sum_{j=1}^{N}\mu_j(a)=C_\Omega \int_{\partial\Omega}a\, d\Hn.
$$
To compute $C_\Omega$, let us use the Rellich identity \eqref{Tao}. For $x_0\in \R^n$, there holds
$$
2=\lim_{N\to +\infty}\frac{1}{N}\sum_{j=1}^{N}\mu_j(a_{x_0})=C_\Omega \int_{\partial\Omega}a_{x_0}\, d\Hn=C_\Omega n|\Omega|,
$$
and therefore $C_\Omega=\frac{2}{n|\Omega|}$. As a consequence, we infer that
$$
\sup_{a\in L^\infty(\partial\Omega)}\left(\frac{1}{N}\inf_{N\geq 1} \sum_{j=1}^{N}\mu_j(a)-\frac{2}{n|\Omega|}\int_{\partial\Omega} a\, d\Hn\right)\leq 0.
$$
Combining all the estimates, it follows that
$$
\sup_{a\in L^\infty(\partial\Omega)}\left(J_\infty(a)-\frac{2}{n|\Omega|}\int_{\partial\Omega}a\, d\Hn\right)=0
$$
and one easily checks that any function $ca_{x_0}$ with $c\in \R$ reaches the supremum, whence the inequality \eqref{estimJa}. 

According to \eqref{estimJa}, one has for a given $p_0\in \R$,
$$
\forall a\in L^\infty(\partial\Omega)\ \mid\ \int_{\partial\Omega}a\, d\Hn=p_0, \qquad
J_\infty(a)\leq \frac{2p_0}{n|\Omega|} .
$$
By noting that the right-hand-side is reached by the Rellich function $ca_{x_0}$ where $c\in \R$ is chosen in such a way that the integral constraint is satisfied, the expected result follows.

To conclude, it remains to show that $J_\infty(a)$ is finite for every $a\in L^\infty(\Omega)$. To this aim, let us argue by contradiction, considering $a\in L^\infty(\Omega)$ and assuming that $J_\infty(a)=-\infty$. Then, there exists an increasing sequence of integers $(j_{k})_{k\in \N}$ such that $\mu_{j_k}(a)\to -\infty$ as $k\to +\infty$. But according to the aforementioned convergence result, one has $\mu_{j_k}(a)\to C_\Omega \int_{\partial\Omega}a\, d\Hn$, which is impossible.

\paragraph{Proof of Theorem \ref{OptimalValue}.}

The first inequality is obvious, according to Theorem \ref{prop:metz1756}, since it can be recast as
$$
\sup_{a\in \Ub} J_\infty(a)\leq \max \left\{J_\infty (a), a\in L^\infty(\partial\Omega)\ \mid\  \int_{\partial\Omega}a\, d\Hn=LM\Hn(\partial\Omega)\right\}.
$$

Let us prove the second item. Still using Theorem \ref{prop:metz1756}, the equality is true if, and only if there exists a Rellich admissible function $\tilde a_{x_0}$ (defined by \eqref{defax0}), in other words a Rellich function belonging to $\Ub$.
Recall that $\int_{\partial\Omega} \langle x-x_0,\nu(x)\rangle\, d\Hn(x)=\int_\Omega \operatorname{div}(x-x_0)\, dx=n|\Omega|$, and therefore $\int_{\partial\Omega}\tilde a_{x_0}\, d\Hn=LM\Hn(\partial\Omega)$. To investigate the existence of a Rellich admissible function, we will then concentrate on the pointwise constraints. The admissibility condition on $\tilde a_{x_0}$ rewrites
$$
\exists x_0\in \R^n\mid \forall x\in \partial\Omega, \qquad -M\leq \frac{LM\Hn (\partial\Omega)}{n|\Omega|} \langle x-x_0,\nu(x)\rangle\leq M
$$
or similarly
$$
\exists x_0\in \R^n\mid \forall x\in \partial\Omega, \qquad |L| \left|\langle x-x_0,\nu(x)\rangle\right|\leq \frac{n|\Omega|}{\Hn (\partial\Omega)}
$$
leading to the condition 
$$
|L|\inf_{x_0\in \R^n}\sup_{x\in \partial\Omega}\left|\langle x-x_0,\nu(x)\rangle\right|\leq \frac{n|\Omega|}{\Hn (\partial\Omega)}
$$
We conclude by noting that one must moreover have $L\in (-1,1)$ by assumption.

\paragraph{Proof of Proposition \ref{prop:ellcircum}.}
This result is a direct consequence of Lemma \ref{prop:Linftynormmax}. Indeed, under the geometric condition on the circumsphere to $\Omega$, one easily shows that any diameter of $\Omega$ is also a diameter of the circumsphere and all inequalities in \eqref{dm11h43} are equalities, whence the claim.

%%%%%%%%%%%%%%%%%%%%%%%%%%%%%%%%%%%%%%%%%%%%%%%%%%%%%%%%%

\section{Solving of Problem \eqref{mainPbOpt}}\label{sec:mainPbOptm}
Let us now investigate Problem \eqref{mainPbOpt}. Note that 
$$
\sup_{\chi_\Gamma\in\mathcal{U}_{L,M} } \inf_{j\in\mathbb{N}^*}\frac{1}{\lambda_j}\int_{\partial \Omega} \chi_\Gamma\left(\frac{\partial \phi_j}{\partial \nu}\right)^2 \, d\Hn \leq \sup_{a\in\overline{\mathcal{U}}_L} \inf_{j\in\mathbb{N}^*}\frac{1}{\lambda_j}\int_{\partial \Omega} a\left(\frac{\partial \phi_j}{\partial \nu}\right)^2 \, d\Hn .
$$
Nevertheless, as an infimum of linear and continuous functions  for the usual $L^\infty$ weak-star topology, the mapping $a\mapsto \inf_{j\in\mathbb{N}^*}\frac{1}{\lambda_j}\int_{\partial \Omega} a\left(\frac{\partial \phi_j}{\partial \nu}\right)^2 \, d\Hn$ is upper semicontinuous and not lower semicontinuous for this topology. Because of this lack of continuity, it is not clear whether the converse sense holds true or not. 

\subsection{A no-gap result}
In the sequel, we will consider two kinds of geometric assumptions on the domain $\Omega$. Let us make them precise.

\begin{quote}
{\bf Uniform boundedness (UB) property.} There exists $A>0$ such that 
\begin{equation} \label{hypNorm} 
\forall j\in\N^*, \qquad \left\Vert \frac{\partial \phi_j}{\partial \nu} \right\Vert_{L^\infty(\partial\Omega)}^2 \leq A \lambda_j. \end{equation}  
\end{quote}

\begin{quote}
{\bf Quantum Uniform Ergodicity of Boundary values (QUEB) property.} The sequence of measures $ \frac{1}{\lambda_j} \left( \frac{\partial \phi_j}{\partial \nu} \right)^2d\Hn$ converges vaguely to the uniform measure $ \frac{2}{n |\Omega|}d\Hn$ as $j\to +\infty$.
\end{quote}

In the following result, one shows that under several geometric assumption, every maximizing sequence $(\chi_{\Gamma_k})_{k\in \N}$, seen as a Radon measure, has to converge vaguely to a Rellich-admissible function as $k$ diverges. Nevertheless, the converse sense is not true. This last claim has been discussed and commented in \cite{PTZobsND} for another spectral functional.

\begin{theorem}[No-Gap]\label{No-Gap}
Let $\Omega$ be a bounded connected domain of $\R^n$ either with a $\mathcal{C}^{1,1}$ boundary, or convex. 

Assume that $\Omega$ satisfies the (UB) and the (QUEB) properties.
Then the optimal values of Problems \eqref{defJrelax} and \eqref{mainPbOpt} are the same.
In particular, one has
$$
\forall L\in[-L^c_{n},L^c_{n}],\qquad
\sup_{\chi_\Gamma \in \mathcal{U}_{L,M} }J_\infty(M\chi_\Gamma-M\chi_{\partial\Omega\backslash \Gamma})=\frac{2L\Hn (\partial\Omega)}{n|\Omega|},
$$
where $L^c_{n}$ is defined in Theorem \ref{OptimalValue}.
\end{theorem}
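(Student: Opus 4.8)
The plan is to prove a two-sided inequality. One direction is free: since each $\chi_\Gamma \in \mathcal{U}_{L,M}$ with $\Gamma$ of prescribed measure gives a density $M\chi_\Gamma - M\chi_{\partial\Omega\backslash\Gamma} \in \Ub$, we have immediately
$$
\sup_{\chi_\Gamma \in \mathcal{U}_{L,M}} J_\infty(M\chi_\Gamma - M\chi_{\partial\Omega\backslash\Gamma}) \leq \sup_{a\in\Ub} J_\infty(a) = \frac{2L\Hn(\partial\Omega)}{n|\Omega|},
$$
the equality being Theorem \ref{OptimalValue} valid precisely for $L\in[-L^c_n,L^c_n]$. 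The entire difficulty lies in the reverse inequality: constructing, for each $\varepsilon>0$, a measurable set $\Gamma$ with $\Hn(\Gamma)=\frac{L+1}{2}\Hn(\partial\Omega)$ whose bang-bang density $a_\Gamma = M(2\chi_\Gamma-1)$ satisfies $J_\infty(a_\Gamma) \geq \frac{2L\Hn(\partial\Omega)}{n|\Omega|} - \varepsilon$.

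First I would fix the structure of the target. The idea is to produce a \emph{maximizing sequence} $(\chi_{\Gamma_k})_{k\in\N}$ whose bang-bang densities $a_k = M(2\chi_{\Gamma_k}-1)$ converge weakly-star to a Rellich-admissible density $\tilde a_{x_0}$ (which is optimal for \eqref{defJrelax} by Theorem \ref{th:2019}, using $L\in[-L^c_n,L^c_n]$). Since $J_\infty$ is only upper semicontinuous for weak-star convergence, weak-star convergence of $a_k \to \tilde a_{x_0}$ does \emph{not} transfer the optimal value automatically, so the construction of $\Gamma_k$ must be done so as to control $J_\infty(a_k)$ from below directly. The key is to observe that for each fixed $j$,
$$
\frac{1}{\lambda_j}\int_{\partial\Omega} a_k \left(\frac{\partial\phi_j}{\partial\nu}\right)^2 d\Hn \longrightarrow \frac{1}{\lambda_j}\int_{\partial\Omega}\tilde a_{x_0}\left(\frac{\partial\phi_j}{\partial\nu}\right)^2 d\Hn,
$$
but the infimum over $j$ interchanged with the limit over $k$ is the crux. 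The (QUEB) property is what controls the tail $j\to\infty$ uniformly: it guarantees that for large $j$ the measures $\frac{1}{\lambda_j}(\partial\phi_j/\partial\nu)^2 d\Hn$ are close to the uniform measure $\frac{2}{n|\Omega|}d\Hn$, so that $\frac{1}{\lambda_j}\int_{\partial\Omega} a_k (\partial\phi_j/\partial\nu)^2 d\Hn$ approaches $\frac{2}{n|\Omega|}\int_{\partial\Omega} a_k\, d\Hn = \frac{2LM\Hn(\partial\Omega)}{n|\Omega|}$, uniformly in $k$, up to an error vanishing as $j\to\infty$. The (UB) property provides the uniform $L^\infty$-bound on $(\partial\phi_j/\partial\nu)^2/\lambda_j$ needed to make this tail estimate uniform and to justify the vague-convergence manipulations.

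The technical heart, and the step I expect to be the main obstacle, is reconciling the low modes with the high modes. For the finitely many small indices $j\leq J_0$ (where $J_0$ is chosen from (QUEB) so that the tail $j>J_0$ is already within $\varepsilon$ of the uniform value), one must build $\Gamma_k$ so that $\frac{1}{\lambda_j}\int_{\partial\Omega} a_k (\partial\phi_j/\partial\nu)^2\,d\Hn$ is simultaneously close to its Rellich target $\frac{2L\Hn(\partial\Omega)}{n|\Omega|}$ for \emph{all} these $j$. This is a finite-dimensional moment-matching problem: one needs a bang-bang function, of prescribed integral, whose pairings against the finitely many fixed weights $(\partial\phi_j/\partial\nu)^2$ approximate those of the continuous density $\tilde a_{x_0}$. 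The standard device is a homogenization/relaxation argument — subdividing $\partial\Omega$ into small cells and, on each cell, placing a characteristic set of the correct local proportion dictated by $\tilde a_{x_0}$, so that $a_k \stackrel{*}{\rightharpoonup} \tilde a_{x_0}$ while the finitely many integrals converge by the definition of weak-star convergence against the fixed $L^1$ weights. Passing from "for each $j$" to "uniformly over the finite set $j\leq J_0$" is immediate once $J_0$ is fixed first. Thus I would order the steps as: (i) record the easy inequality; (ii) invoke Theorem \ref{th:2019} to fix the optimal Rellich density $\tilde a_{x_0}$; (iii) use (QUEB) and (UB) to choose $J_0$ controlling the tail uniformly; (iv) construct the homogenized bang-bang sequence $\Gamma_k$ with $a_k \stackrel{*}{\rightharpoonup}\tilde a_{x_0}$; (v) combine the finite-mode convergence with the uniform tail estimate to bound $\inf_j$ from below, concluding $\liminf_k J_\infty(a_k) \geq \frac{2L\Hn(\partial\Omega)}{n|\Omega|}-\varepsilon$ and letting $\varepsilon\to 0$. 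The delicate point is ensuring the tail estimate is genuinely uniform in $k$, which is exactly where the quantitative content of (QUEB) combined with the uniform bound (UB) is indispensable.
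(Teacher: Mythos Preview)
Your outline captures the right skeleton---easy inequality, split into low/high modes, homogenize toward the Rellich density---but step~(iii)/(v) hides a real gap. You assert that (QUEB) combined with (UB) lets you choose a single $J_0$ so that the tail
\[
\left|\frac{1}{\lambda_j}\int_{\partial\Omega} a_k\left(\frac{\partial\phi_j}{\partial\nu}\right)^2 d\Hn - \frac{2LM\Hn(\partial\Omega)}{n|\Omega|}\right|
\]
is small for all $j>J_0$ \emph{uniformly in $k$}. This does not follow. The (QUEB) hypothesis is only \emph{vague} convergence of the measures $f_j\,d\Hn$, where $f_j=\lambda_j^{-1}(\partial\phi_j/\partial\nu)^2$; together with (UB) you can upgrade this to weak-$*$ convergence in $L^\infty(\partial\Omega)$, i.e.\ convergence against each \emph{fixed} $L^1$ test function. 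But your homogenized bang-bang sequence $(a_k)$ oscillates more and more finely as $k\to\infty$, and there is no reason why $\int a_k(f_j-\tfrac{2}{n|\Omega|})\,d\Hn$ should be small uniformly in $k$: that would require $\|f_j-\tfrac{2}{n|\Omega|}\|_{L^1(\partial\Omega)}\to 0$, which (QUEB)+(UB) simply do not give (take $f_j(\theta)=1+\cos(j\theta)$ on the circle as a counterexample). Reordering the steps does not help either: if you instead fix $k$ first and choose $J_0=J_0(k)$, then you need the low-mode integrals to be close to the Rellich value for all $j\le J_0(k)$, and $J_0(k)$ may blow up with $k$, so the finite-mode weak-$*$ argument no longer closes.

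The paper's proof avoids this circularity by an \emph{iterative improvement} rather than a single homogenization. One starts from an arbitrary Lipschitz set $\Gamma_0$, applies (QUEB)+(UB) to the \emph{fixed} function $2\chi_{\Gamma_0}-1$ to get a cutoff $j_0$ for which the high modes of $\Gamma_0$ are already within a quarter of the gap $\widehat M - J_\infty(\Gamma_0)$. One then perturbs $\Gamma_0$ to $\Gamma^\varepsilon$ by excising and inserting small balls whose sizes are dictated by the Rellich density; the perturbation has small $L^1$ norm (of order $\varepsilon^{n-1}$), so by (UB) the high-mode integrals move by at most $2A\,\|\chi_{\Gamma^\varepsilon}-\chi_{\Gamma_0}\|_{L^1}$ and remain controlled, while the finitely many low-mode integrals increase by a definite amount. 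This yields $J_\infty(\Gamma_1)\ge J_\infty(\Gamma_0)+c\min(C,\widehat M-J_\infty(\Gamma_0))\,(\widehat M-J_\infty(\Gamma_0))$, and iterating produces a sequence with $J_\infty(\Gamma_k)\to\widehat M$. The key structural difference from your plan is that (QUEB) is applied at each step to a \emph{fixed} set, and (UB) is used not to make the tail uniform over a whole family but only to bound how much a \emph{small} $L^1$ perturbation can move each individual mode. Your direct homogenization would need a genuinely stronger ergodicity hypothesis (an $L^1$ rate in (QUEB)) to go through as written.
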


The statement of Theorem \ref{No-Gap} can be reformulated as follows: there is no gap between the optimal values of the problems \eqref{mainPbOpt} and \eqref{defJrelax}. Moreover, an explicit maximizing sequence $(\chi_{\Gamma_k})_{k\in\N}$ is provided in the proof of this theorem, whatever the value of $L$, although the knowledge of the optimal value is only known in the case where $|L|\leq L^c_{n}$. 

Finally, the assumptions of Theorem \ref{No-Gap} are not empty. As it will be highlighted in the discussion on the disk below, these assumptions are satisfied in particular when $\Omega$ is the unit disk of $\R^2$.

\subsection{Comments on the assumptions and the results}\label{sec:comments}

The following remarks are in order.
\begin{itemize}
\item The two properties  (UB)) and (QUEB) depend on the choice of the Hilbert basis $(\phi_j)_{j\in\N}$. 
\item The property (UB) holds  true for whenever $\Omega$ is a $n$-dimensional orthotope $\Omega=(0,\pi)^n$, a two-dimensional disk (see Section \ref{Invest}), or the flat torus $\Omega=\mathbb{T}^n$.
\item Concerning the (QUEB) property, very little is known about it. It is nevertheless notable that the following close property is well known and referenced.
\begin{quote}
{\bf Weak Quantum Ergodicity of Boundary values (WQEB) property.} There exists a subsequence of $ \frac{1}{\lambda_j} \left( \frac{\partial \phi_j}{\partial \nu} \right)^2d\Hn$ converging vaguely to the uniform measure $ \frac{2}{n |\Omega|}d\Hn$.
\end{quote}
\medskip
It has been proved in particular in \cite{BurqErgo,HasselErgo} that the (WQEB) property holds true in the flat torus $\Omega=\mathbb{T}^n$, and in all piecewise smooth ergodic domains $\Omega$. More precisely, in these articles it is shown that for such a domain $\Omega$, and for every semi-classical operator $A$ of order $0$ on $\partial\Omega$, there exists a density one\footnote{The expression ``\textit{density one}'' means that there exists $\mathcal{I}\subset \N^*$ such that $\# \{j\in\mathcal{I}\ \vert\  j\leq N\}/N$ converges to $1$ as $N$ tends to $+\infty$.} subsequence $(j_k)_{k\in\N}$ such that 
$$
\lim_{k\rightarrow +\infty} \lambda_{j_k}^{-1} \left\langle A \left(\frac{\partial \phi_{j_k}}{\partial \nu} \right), \frac{\partial \phi_{j_k}}{\partial\nu}  \right\rangle = \frac{4}{|\mathbb{S}^{n-1}| \; |\Omega|} \int_{T^*(\partial\Omega)} a(y,\eta) \sqrt{1-|\eta|^2} \, d\eta\,  d\Hn , 
$$ 
where $\mathbb{S}^{n-1}$ is the unit sphere in $\R^n$ and $a$ is the symbol of $A$. It says that the boundary traces of eigenfunctions are, on the average, distributed in phase space $T^*(\partial\Omega)$, according to $(1-|\eta|^2)^{1/2}$ (\cite{BarnettHassel}) for the Dirichlet boundary conditions. Finally, one recovers (WQEB) by choosing $a(y,\eta)=a(y) \in L^\infty (\partial\Omega, [0,1])$.
\item Even if the (WQEB) property is satisfied in a large class of domains, very few of them may satisfy the more restrictive (QUEB) property. Up to our knowledge, the only domain known to satisfy the (QUEB) property is the Euclidean disk in $\R^n$.
An interesting issue would consist in determining a sufficient geometric condition guaranteeing this property.

Let us sum-up what is known about such properties for particular choices of domains $\Omega$.
If $\Omega$ is a rectangle with the usual Hilbert basis of eigenfunctions of $\triangle$ made of products of sine functions, the (WQEB) property is satisfied but not the (QUEB) property. If $\Omega$ is a disk of $\R^2$ with the usual Hilbert basis of eigenfunctions of $\triangle$ defined in terms of Bessel functions, the (QUEB) property holds true. These results are in particular recovered in Section \ref{Invest}. Finally, concerning the three-dimensional Euclidean unit ball, a weak consequence of the main quantum ergodicity results is the existence of a Hilbert basis of eigenfunctions such that the (WQEB) is satisfied.
\end{itemize}                                                                                                         

\subsection{Proof of Theorem \ref{No-Gap}}\label{sec:proofNG}

This proof is inspired by \cite[Theorem 6]{PTZobsND}, but important adaptations and changes have been necessary.
In the whole proof and for the sake of simplicity, we will assume that $M=1$ (the expected general result will be easily inferred by an immediate adaptation of this proof) and use the notation $\widehat M= \frac{2L\Hn(\partial\Omega)}{n|\Omega|}$. 

We will distinguish between the two cases $|L|\leq L^c_{n}$ and $|L|> L^c_{n}$.
\begin{center}
\textbf{First case: $|L|\leq L^c_{n}$}
\end{center}
This case is the hardest one. Assume without loss of generality that $x_0=0$. Therefore, according to Theorem \ref{OptimalValue}, the function $\partial\Omega\ni x\mapsto \frac{\widehat M}{2}\langle x,\nu(x)\rangle$ belongs to $\Ub$ and is a solution of the convexified problem \eqref{defJrelax}.

Introduce
\begin{eqnarray*}
I_j(\Gamma)& =& \int_{\partial\Omega} (\chi_\Gamma-\chi_{\partial\Omega\backslash \Gamma})\frac{1}{\lambda_j}\left(\frac{\partial\phi_j}{\partial\nu}\right)^2\, d\Hn\\ 
&=& \int_{\partial\Omega} (2\chi_\Gamma-\chi_{\partial\Omega})\frac{1}{\lambda_j}\left(\frac{\partial\phi_j}{\partial\nu}\right)^2\, d\Hn
\end{eqnarray*}
for every $j\in\N^*$, so that $J_\infty(\chi_\Gamma-\chi_{\partial\Omega\backslash \Gamma}) = \inf_{j\in\N^*} I_j(\Gamma)$.

According to the geometric assumptions on $\Omega$ and to Theorem \ref{OptimalValue}, we have
$$
\sup_{\chi_\Gamma\in\mathcal{U}_{L,M} } \inf_{j\in\mathbb{N}^*}I_j(\Gamma) \leq \sup_{a\in\overline{\mathcal{U}}_L} \inf_{j\in\mathbb{N}^*}\frac{1}{\lambda_j}\int_{\partial \Omega} a\left(\frac{\partial \phi_j}{\partial \nu}\right)^2 \, d\Hn 
= \widehat{M}.
$$
To prove that the latter inequality is in fact an equality, we will construct a sequence $\left(\chi_{\Gamma_k}\right) _{k\in\N}\in (\U)^\N$ in such a way that $(J_\infty(\chi_{\Gamma_k}-\chi_{\partial\Omega\backslash\Gamma_k}))_{k\in\N}$ converges to $\widehat M$. 
 
Let $\Gamma_0$ be an open, connected and Lipschitz subdomain of $\partial\Omega$ such that $\Hn(\Gamma_0)=\frac{L+1}{2}\Hn(\partial \Omega )$. Let us assume that $J_\infty(\Gamma_0)<\widehat M$ (either we are done). According to the (QUEB) property, there exists $j_0 \in \N^*$ such that
\begin{equation}\label{highfreq}
I_j(\Gamma_0) \geq \widehat M-\frac{1}{4}(\widehat M-J_\infty(\Gamma_0)),
\end{equation}
for every  $j> j_0$.

Since $\partial\Omega$ and $\Gamma_0$ are supposed to be Lipschitz, then $\Gamma_0$ and $\partial\Omega\backslash \Gamma_0$ satisfy a $\delta$-cone property\footnote{\label{footnoteEpscone}We recall that an open smooth surface $A$ in $\R^n$ verifies a $\delta$-cone property if, for every $x\in A$, there exists a normalized vector $\xi_x$ such that $C(y,\xi_x,\delta)\subset A$ for every $y\in \overline{A}\cap B(x,\delta)$, where $C(y,\xi_x,\delta)=\{z\in\R^n\ \vert\ \langle z-y,\xi\rangle \geq \cos\delta \Vert z-y\Vert\text{ and }0<\Vert z-y\Vert <\delta\}$, see, e.g., \cite{henrot-pierre}}.
Consider now two partitions 
\begin{equation}\label{labsubdivisions}
\overline{\Gamma}_0=\bigcup_{i=1}^K F_i\quad\textrm{and}\quad\Gamma_0^c=\bigcup_{i=1}^{\tilde{K}}
\widetilde F_i,
\end{equation}
respectively of $\overline{\Gamma_0}$ and $\Gamma_0^c$, such that each $F_i$ and $\widetilde{F_i}$ is a subset of a $\partial\Omega$-strata. From the $\delta$-cone property, there exist $c_\delta>0$ and a choice of family $(F_i)_{1\leq i \leq K}$ (resp. $(\widetilde F_i)_{1\leq i \leq \tilde{K}}$) such that, for $| F_i |$ small enough, one has
\begin{equation}\label{regularMesh}
\forall i\in \{1,\cdots,K\} \ \left(\textrm{resp. }\forall i\in \{1,\cdots,\tilde{K}\} \right), \ \frac{\eta_i}{\operatorname{diam}( F_i) }\geq c_\delta \ \left(\textrm{resp. }\frac{\widetilde \eta_i}{\operatorname{diam}(\widetilde F_i)}\geq c_\delta \right),
\end{equation}
where $\eta_i$ (resp., $\widetilde\eta_i$) is the inradius\footnote{In other words, the largest radius of balls contained in $F_i$.} of $F_i$
(resp., $\widetilde F_i$), and $\operatorname{diam}(F_i)$ (resp., $\operatorname{diam}(\widetilde F_i)$) the diameter of $F_i$ (resp., $\widetilde F_i$).
Finally for all $i\in\{1,\ldots,K\}$ (resp., for all $i\in\{1,\ldots,\tilde{K}\}$), there exist $\xi_i\in F_i$ (resp., $\tilde{\xi}_i\in\widetilde F_i$) such that $B(\xi_i,\eta_i/2)\subset F_i\subset B(\xi_i,\eta_i/c_\delta)$ (resp.,
$B(\tilde{\xi}_i,\widetilde\eta_i/2)\subset \widetilde F_i\subset B(\tilde{\xi}_i,\widetilde\eta_i/c_\delta)$), 

Now, choosing $\xi_i$ and $\tilde{\xi}_i$ as Lebesgue points of the functions $\left(\frac{\partial\phi_j}{\partial\nu}\right)^2$, for all $j\leq j_0$ yields 
\begin{eqnarray*}
\int_{F_i}\frac{1}{\lambda_j}\left( \frac{1}{2}- \frac{\widehat M}{4}\langle x,\nu \rangle\right) \left( \frac{\partial\phi_j}{\partial\nu}(x)\right) ^2\, d\Hn(x) =& \\ & \displaystyle \hspace{-4cm} \frac{1}{\lambda_j}\left( \frac{\Hn(F_i)}{2} - \int_{F_i}\frac{\widehat M}{4}\langle x,\nu\rangle d\Hn(x)\right)\left( \frac{\partial\phi_j}{\partial\nu}(\xi_i)\right)^2+\mathrm{o}(| F_i|)\quad\textrm{as}\
\eta_i\rightarrow 0, \\
\end{eqnarray*}
 for all $j\leq j_0$, $i\in\{1,\ldots,K\}$ and
\begin{eqnarray*}
\int_{\widetilde F_i} \left( \frac{1}{2}+ \frac{\widehat M}{4}\langle x,\nu \rangle\right)  \left(\frac{\partial\phi_j}{\partial\nu}(x)\right)^2\, d\Hn(x) = &\\ &\displaystyle \hspace{-4cm} \frac{1}{\lambda_j}\left(\frac{|{\widetilde{F_i}}|}{2}+\int_{\widetilde{F_i}} \frac{\widehat M}{2}\langle x,\nu\rangle\, d\Hn(x)\right)  \left( \frac{\partial\phi_j}{\partial\nu}(\widetilde{\xi_i})\right)^2+\mathrm{o}(|\widetilde{F_i}|)\quad\textrm{as}\ \widetilde \eta_i\rightarrow 0,\\
\end{eqnarray*}
for every $i\in\{1,\ldots,\tilde{K}\}$. Setting $\eta=\displaystyle \max\left(\max_{1\leq i \leq K}\operatorname{diam}(F_i),\max_{1\leq i \leq \tilde{K}}\operatorname{diam}(\widetilde F_i) \right)$ and using that $ \sum_{i=1}^K \Hn(F_i) =\frac{L+1}{2} \Hn(\partial\Omega) $ and $ \sum_{i=1}^{\tilde{K}} \Hn(\widetilde F_i) =\frac{1-L}{2} \Hn(\partial\Omega)$, there holds $ \sum_{i=1}^K \operatorname{o}(\Hn(F_i)) = \sum_{i=1}^{\tilde{K}} \operatorname{o}(\Hn(\widetilde F_i)) =\operatorname{o}(1)$ as $\eta\to 0$. Then,
\begin{multline}\label{eggrandO1}
 \int_{\Gamma_0} \frac{1}{\lambda_j}\left( \frac{1}{2}- \frac{\widehat M}{4}\langle x,\nu\rangle \right)\left( \frac{\partial \phi_j}{\partial \nu} (x) \right)^2 d\Hn(x) \\
 = \sum_{i=1}^K \frac{1}{\lambda_j}\left( \frac{\Hn(F_i)}{2} - \int_{F_i} \frac{\widehat M\langle x,\nu\rangle }{4} d\Hn \right) \left( \frac{\partial \phi_j}{\partial \nu} (\xi_i) \right)^2 + \operatorname{o}(1) 
\end{multline}
and
\begin{multline}\label{eggrandO2}
\int_{\Gamma_0^c}  \frac{1}{\lambda_j}\left( \frac{1}{2}+ \frac{\widehat M}{4}\langle x,\nu\rangle \right) \left( \frac{\partial \phi_j}{\partial \nu}(x) \right)^2 d\Hn \\
=  \sum_{i=1}^{\tilde{K}}  \left( \int_{\widetilde{F}_i}  \frac{1}{\lambda_j}\left( \frac{1}{2}+ \frac{\widehat M}{4}\langle x,\nu\rangle \right) \, d\Hn(x) \right)\left( \frac{\partial \phi_j}{\partial \nu} (\widetilde{\xi_i}) \right)^2 + \operatorname{o}(1)
\end{multline}
for every $j\leq j_0$.

We set
\begin{eqnarray*}
h_i &=& \frac{\Hn(F_i)}{2} - \int_{F_i} \frac{\widehat M }{4}\langle x,\nu\rangle\, d\Hn(x), \quad i\in \{1,\cdots,K\} \\
\textrm{and} \qquad \ell_i &=&  \frac{\Hn(\widetilde{F}_i)}{2}+ \int_{\widetilde{F_i}} \frac{\widehat M }{4}\langle x,\nu\rangle\, d\Hn(x), \quad i\in \{1,\cdots,\tilde{K}\}
\end{eqnarray*}
Then, for $\varepsilon>0$ small enough, we define the perturbation $\Gamma^\varepsilon$ of $\Gamma_0$ by
$$
\Gamma^\varepsilon =\left(\Gamma_0\backslash \bigcup_{i=1}^K \overline{B(\xi_i,\varepsilon_i)}\right)\quad \bigcup\quad \bigcup_{i=1}^{\tilde{K}} B(\tilde{\xi}_i,\widetilde \varepsilon_i),
$$
where $\varepsilon_i$ and $\widetilde \varepsilon_i$ are chosen so that $\Hn(B(\xi_i,\varepsilon_i))=\varepsilon^{n-1}h_i $ and $\Hn(B(\widetilde{\xi_i},\widetilde{\varepsilon_i}))=\varepsilon^{n-1}\ell_i $.
This is possible provided that
$$
0<\varepsilon< \min\left(\min_{1\leq i \leq K}\frac{\eta_i \Hn(B(\xi_i,1))^{1/(n-1)}}{h_i^{1/(n-1)}},\min_{1\leq i \leq \tilde{K}}\frac{\widetilde \eta_i \Hn(B(\tilde\xi_i,1))^{1/(n-1)}}{\ell_i^{1/(n-1)}}\right).
$$
By the isodiametric inequality\footnote{The isodiametric inequality states that, for every compact $K$ of the Euclidean space $\R^n$, there holds $\vert K\vert\leq \vert B(0,\operatorname{diam}(K)/2)\vert$. 
The same result holds, up to a multiplicative constant, for a compact stratified manifold endowed with the measure $\Hn$ and the geodesic distance on each strata.} 
and a compactness argument, there exists a constant $V_n>0$ (depending only on $\Omega$) such that
$\Hn(F_i) \leq V_n({\operatorname{diam}(F_i)})^{(n-1)}$ for every $i\in \{1,\cdots,K\}$, and $\Hn(\widetilde{F}_i) \leq V_n({\operatorname{diam}(\tilde F_i)})^{(n-1)}$ for every $i\in \{1,\cdots,\tilde K\}$, independent of the considered partitions. 
Because of the compactness of $\partial\Omega$, there also exists $v_n>0$ (depending only on $\Omega$) such that $\Hn(B(x,1)) \geq v_n$ for all $x\in\partial\Omega$.
Set now $\varepsilon_0=\min(1,c_\delta v_n/V_n^{1/(n-1)})$. 
From \eqref{regularMesh}, one has 
$$
\frac{\eta_i \Hn(B(\xi_i,1))^{1/(n-1)}}{h_i^{1/(n-1)}}\geq \frac{v_n}{(1-L)^{1/(n-1)}V_n^{1/(n-1)}}\frac{\eta_i}{\operatorname{diam}(F_i)}\geq
\varepsilon_0 ,
$$
for every $i\in \{1,\cdots,K\}$, and
$$
\frac{\widetilde\eta_i \Hn(B(\tilde\xi_i,1))^{1/(n-1)}}{\ell_i^{1/(n-1)}}\geq \varepsilon_0 ,
$$
for every $i\in \{1,\cdots,\tilde{K}\}$. 
This perturbation is well defined for $\varepsilon\in (0,\varepsilon_0)$.
Moreover,
\begin{eqnarray*}
\Hn(\Gamma^\varepsilon)  & = &  \Hn(\Gamma_0) -
\sum_{i=1}^K  | B(\xi_i,\varepsilon_i)| +\sum_{i=1}^{\tilde{K}} | B(\tilde\xi_i,\widetilde \varepsilon_i)| \\
 & = & \Hn(\Gamma_0) -\varepsilon^{n-1} \sum_{i=1}^Kh_i+\varepsilon^{n-1}\sum_{i=1}^{\tilde{K}}\ell_i \\
&=&\Hn(\Gamma_0) -\varepsilon^{n-1} \sum_{i=1}^K  \left( \frac{\Hn(F_i)}{2} - \int_{F_i} \frac{\widehat M}{4} \langle x,\nu\rangle\, d\Hn \right) \\ 
&&+\varepsilon^{n-1} \sum_{i=1}^{\tilde{K}}    \left(\frac{\Hn(\widetilde{F}_i)}{2} + \int_{\widetilde{F_i}} \frac{\widehat M}{4} \langle x,\nu\rangle\, d\Hn \right)   \\
&=&  \Hn(\Gamma_0) = \frac{L+1}{2}\Hn(\partial\Omega).
\end{eqnarray*}

\begin{paragraph}{Low frequencies estimates.}
Let us write
\begin{eqnarray*}
I_j(\Gamma^\varepsilon)  & =&  2\int _{\Gamma^\varepsilon}\frac{1}{\lambda_j}\left( \frac{\partial\phi_j}{\partial\nu}\right) ^2\, d\Hn-\int _{\partial\Omega}\frac{1}{\lambda_j}\left( \frac{\partial\phi_j}{\partial\nu}\right) ^2\, d\Hn \\
&= & I_j(\Gamma_0)-2\sum_{i=1}^K\int_{B(\xi_i,\varepsilon_i)}\frac{1}{\lambda_j}\left(\frac{\partial\phi_j}{\partial\nu}\right)^2\, d\Hn+2\sum_{i=1}^{\tilde{K}}\int_{B(\tilde{\xi}_i,\widetilde \varepsilon_i)}\frac{1}{\lambda_j}\left(\frac{\partial\phi_j}{\partial\nu}\right)^2\, d\Hn 
\end{eqnarray*}
and using that $\xi_i$ and $\tilde{\xi}_i$ are Lebesgue points of the function $\left(\frac{\partial\phi_j}{\partial\nu}\right)^2$, there holds
\begin{eqnarray} 
 I_j(\Gamma^\varepsilon) & = & \displaystyle  I_j(\Gamma_0)-
2\sum_{i=1}^K \frac{|B(\xi_i,\varepsilon_i)|}{2\lambda_j}\left( \frac{\partial\phi_j}{\partial\nu}(\xi_i)\right)^2 +\operatorname{o}(|B(\xi_i,\varepsilon_i)|)\nonumber \\
& & +2\sum_{i=1}^{\tilde{K}}  \frac{|B(\tilde\xi_i,\widetilde\varepsilon_i)|}{2\lambda_j}\left(\frac{\partial\phi_j}{\partial\nu}(\tilde{\xi}_i)\right)^2 +\operatorname{o}(|B(\tilde\xi_i,\widetilde\varepsilon_i)|) \nonumber\\
& = & \displaystyle  I_j(\Gamma_0) - 2\varepsilon^{n-1} \left( \sum_{i=1}^K \frac{h_i}{2\lambda_j}\left(\frac{\partial\phi_j}{\partial\nu}(\xi_i)\right)^2
- \sum_{i=1}^{\tilde{K}} \frac{\ell_i}{2\lambda_j}\left( \frac{\partial\phi_j}{\partial\nu}(\tilde{\xi}_i)\right)^2 \right) +\varepsilon^{n-1}\operatorname{o}(1), \label{Calc}
\end{eqnarray}
by using that $\sum_{i=1}^K\mathrm{o}(|B(\xi_i,\varepsilon_i)|)+\sum_{i=1}^{\tilde{K}}\mathrm{o}(|B(\widetilde\xi_i,\widetilde\varepsilon_i)|)=\varepsilon^{n-1}\mathrm{o}(1)$ as $\varepsilon\to 0$, and thus as $\eta\to 0$.

Thus, according to \eqref{eggrandO1} and \eqref{eggrandO2}, and noting that
\begin{multline*}
\int_{\Gamma_0^c}  \frac{1}{\lambda_j}\left( \frac{1}{2}+ \frac{\widehat M}{4}\langle x,\nu\rangle \right) \left( \frac{\partial \phi_j}{\partial \nu}(x) \right)^2 d\Hn-  \int_{\Gamma_0} \frac{1}{\lambda_j}\left( \frac{1}{2}- \frac{\widehat M}{4}\langle x,\nu\rangle \right)\left( \frac{\partial \phi_j}{\partial \nu} (x) \right)^2 d\Hn(x)\\
=\frac{1}{2}\left(\widehat M-\left(2 \int_{\Gamma_0} \frac{1}{\lambda_j}\left( \frac{\partial \phi_j}{\partial \nu} (x) \right)^2 d\Hn(x) - \int_{\partial\Omega} \frac{1}{\lambda_j}\left( \frac{\partial \phi_j}{\partial \nu} (x) \right)^2 d\Hn(x)\right)\right),
\end{multline*}

one has
\begin{eqnarray*}
I_j(\Gamma^\varepsilon) &=&  I_j(\Gamma_0)+\varepsilon^{n-1} \left(  \widehat M-I_j(\Gamma_0)\right)+\varepsilon^{n-1} \operatorname{o}(1)\qquad\textrm{as}\ \eta\rightarrow 0,
\end{eqnarray*}
for all $j\leq j_0$ and all $\varepsilon\in(0,\varepsilon_0)$.
Then, since $\varepsilon_0^{n-1}\leq 1$, one has
\begin{equation}\label{lowfreq}
I_j(\Gamma^\varepsilon) \geq J_\infty(\Gamma_0)+\varepsilon^{n-1}
(\widehat M-J_\infty(\Gamma_0))+\varepsilon^{n-1} \operatorname{o}(1)\qquad\textrm{as}\
\eta\rightarrow 0 ,
\end{equation}
for every $j\leq j_0$ and $\varepsilon\in(0,\varepsilon_0)$.

Choosing the subdivisions $(F_i)_{1\leq i \leq K}$ and $(\widetilde F_i)_{1\leq i \leq \tilde{K}}$ fine enough, in other words $\eta>0$ small enough, allows to write that
\begin{equation}\label{in29eps_1}
I_j(\Gamma^\varepsilon) \geq J_\infty(\Gamma_0)+\frac{\varepsilon^{n-1}}{2} 
(\widehat M-J_\infty(\Gamma_0)),
\end{equation}
for every $j\leq j_0$ and every $\varepsilon\in(0,\varepsilon_0)$.
\end{paragraph}

\begin{paragraph}{High frequencies estimates.}
According to \eqref{hypNorm}, the sequence $\left(\frac{1}{\lambda_j}\left(\frac{\partial\phi_j}{\partial\nu}\right)^2\right)_{j\in\N^*}$ is bounded by a constant $A>0$ in $L^\infty(\partial\Omega)$. As a consequence, one has
$$
\vert I_j(\Gamma^\varepsilon)-I_j(\Gamma_0)\vert =  2\left\vert\int_{\partial\Omega} \left(\chi_{\Gamma^\varepsilon}-\chi_{\Gamma_0}\right)\frac{1}{\lambda_j}\left(\frac{\partial\phi_j}{\partial\nu}\right)^2\,d\Hn\right\vert 
\leq 2A^2 \left(\int_{\partial\Omega}\vert\chi_{\Gamma^{\varepsilon}}-\chi_{\Gamma_{0}}\vert  \, d\Hn\right) 
$$
for all $j\in\N$ and every $\varepsilon\in(0,\varepsilon_0)$.
Moreover,
\begin{equation*}
\int_{\partial\Omega}\vert\chi_{\Gamma^{\varepsilon}}-\chi_{\Gamma_{0}}\vert \, d\Hn
= \varepsilon^{n-1} \left(\sum_{i=1}^Kh_i+\sum_{i=1}^{\tilde{K}}\ell_i\right)
= \varepsilon^{n-1} \Hn(\partial\Omega),
\end{equation*}
and thus
$
\vert I_j(\Gamma^\varepsilon)-I_j(\Gamma_0)\vert \leq 2 A^{2} \varepsilon^{n-1}  \Hn(\partial\Omega) .
$
Finally setting,
$$ \varepsilon_1 = \min\left(\varepsilon_0, \left(
\frac{(\widehat M-J_\infty(\Gamma_0))}{2^3 A^{2}\Hn(\partial\Omega)}\right)^\frac{1}{n-1}\right),$$
one has, using \eqref{highfreq} that
\begin{equation}\label{highfreqeps}
I_j(\Gamma^\varepsilon) \geq \widehat M - \frac{1}{2}(\widehat M-J_\infty(\Gamma_0))
\end{equation}
for every $j\geq j_0$ and every $\varepsilon\in(0,\varepsilon_1)$.
\end{paragraph}

\begin{paragraph}{Conclusion.}
We now use the fact that $J_\infty(\Gamma_0)+\frac{\varepsilon^{n-1}}{2} (\widehat M-J_\infty(\Gamma_0)) \leq \widehat M-\frac{1}{2}(\widehat M-J_\infty(\Gamma_0))$ for all $\varepsilon\in(0,\varepsilon_0)$ (and thus for $\varepsilon \in (0,\varepsilon_1)$). Combining \eqref{in29eps_1} and \eqref{highfreqeps} , it follows that
\begin{equation}
J_\infty(\Gamma^\varepsilon)\geq J_\infty(\Gamma_0)+\frac{\varepsilon^{n-1}}{2} (\widehat M-J_\infty(\Gamma_0)),
\end{equation}
for every $\varepsilon\in(0,\varepsilon_1)$. 
In particular this inequality holds for $\varepsilon$ such that $\varepsilon^{n-1} = C_1 \min(C_2,\widehat M-J_\infty(\Gamma_0) )$, with $C_1= 1/8A^2\Hn(\partial\Omega)$ and $C_2 = (1/C_1)\min(1,(c_\delta v_n)^{n-1}/V_n) $ which are positive constants.
For this particular value of $\varepsilon$, we set $\Gamma_1=\Gamma^\varepsilon$, which ensure to have
\begin{equation}\label{debutiter}
J_\infty(\Gamma_1)\geq J_\infty(\Gamma_0)+\frac{C_1}{2} \min(C_2, \widehat M-J_\infty(\Gamma_0))\,(\widehat M-J_\infty(\Gamma_0)).
\end{equation}
Notice that the constants only depend on $A$ and $\partial\Omega$, and by construction $\Gamma_1$ satisfies a $\delta$-cone property.

Now, if $J_\infty(\Gamma_1)\geq \widehat M$ then we are done. 
Otherwise we apply the same procedure for $\Gamma_1$. According to the (QUEB) property, there exist a new $j_0$ such that \eqref{highfreq} holds with $\Gamma_1$ instead of $\Gamma_0$. 
This gives a lower bound for the higher modes. The low modes  $j\leq j_0$ are bounded below as before leading to an estimate similar to \eqref{in29eps_1} for $\Gamma_1$. 
Therefore, one gets the existence of $\Gamma_2$ such that \eqref{debutiter} holds with $\Gamma_1$ replaced by $\Gamma_2$ and $\Gamma_0$ replaced by $\Gamma_1$.

This way, one builds a sequence $\left( \Gamma_k \right)_{k\in\N}$ such that $\Hn(\Gamma_k)=\frac{L+1}{2}\Hn(\partial\Omega)$, as long as $J_\infty(\Gamma_k)<\widehat M$, and satisfying
$$
J_\infty(\Gamma_{k+1})\geq J_\infty(\Gamma_k)+ \frac{C_1}{2} \min(C_2, \widehat M-J_\infty(\Gamma_k))\, (\widehat M-J_\infty(\Gamma_k)).
$$
If $J_\infty(\Gamma_k)<\widehat M$ for all $k\in\N$, then the sequence $(J_\infty(\Gamma_k))_{k\in\N}$ is increasing, bounded above by $\widehat M$, and converges to $\widehat M$, which concludes the proof in the case where $L\leq L^c_{n}$.
\end{paragraph}
\begin{center}
\textbf{Second case: $|L|> L^c_{n}$}
\end{center}
The proof is very similar to the one in the case $L\leq L^c_{n}$, and even easier since we do not need to use sharp estimates for high-frequencies modes. For this reason, we only provide the main steps of the proof, explaining the (small) changes that must be done to adapt the proof of the first case.

As before, we know by Theorem \ref{OptimalValue} that there exists $a^*\in \Ub$ solving Problem \eqref{defJrelax} and moreover, $J_\infty(a^*)<\widehat M$ (since $L^c_{n}$ is the critical value such that $\widehat M$ is the optimal value for this problem).

Let $\Gamma_0$ be an open, connected and Lipschitz subdomain of $\partial\Omega$ such that $\Hn(\Gamma_0)=\frac{L+1}{2}\Hn(\partial\Omega)$. Let us assume that $J_\infty(\Gamma_0)<J_\infty(a^*)$ (otherwise we are done). Thanks to (QUEB) and (UB), there exists $j_0 \in \N^*$ such that
\begin{equation}\label{highfreq}
I_j(\Gamma_0) \geq J_\infty(a^*)-\frac{1}{4}(J_\infty(a^*)-J_\infty(\Gamma_0)),
\end{equation}
for every  $j> j_0$. Replacing the quantity $\frac{\widehat M}{2}\langle x,\nu\rangle$ by $a^*$ in the estimates, we reproduce the proof and use the same notations as before. Roughly speaking, it suffices to replace everywhere the number $\widehat M$ by $J_\infty(a^*)$ and the function $x\mapsto \frac{\widehat M}{2}\langle x,\nu\rangle$ by $a^*$. In particular, this allows to define $\varepsilon_0$ and $\varepsilon_1$ as in the first part of the proof.

This way, we build from $\Gamma_0$ a new set $\Gamma^\varepsilon$ having a Lipschitz boundary, such that $\Hn(\Gamma^\varepsilon)=\Hn(\Gamma_0)=L\Hn(\partial\Omega)$ and 
\begin{itemize}
\item ({\bf Low frequencies estimates})
$$
\forall j\leq j_0, \ \forall \varepsilon\in(0,\varepsilon_0),\qquad I_j(\Gamma^\varepsilon) \geq J_\infty(\Gamma_0)+\frac{\varepsilon^{n-1}}{2} (J_\infty(a^*)-J_\infty(\Gamma_0)),
$$
\item ({\bf High frequencies estimates})
$$
\forall j\geq j_0, \ \forall \varepsilon\in(0,\varepsilon_1),\qquad I_j(\Gamma^\varepsilon) \geq J_\infty(a^*) - \frac{1}{2}(J_\infty(a^*)-J_\infty(\Gamma_0)).
$$
\end{itemize}
Combining these estimates, it follows that
$$
\forall \varepsilon\in(0,\varepsilon_1), \qquad J_\infty(\Gamma^\varepsilon)\geq J_\infty(\Gamma_0)+\frac{\varepsilon^{n-1}}{2} (J_\infty(a^*)-J_\infty(\Gamma_0)),
$$
The end of the proof is then exactly similar to the previous case.

\subsection{Solving Problems \eqref{defJrelax} and \eqref{mainPbOpt} in 2D}\label{Invest}
This section is devoted to stating no-gap type results in particular situations that are not covered by  Theorem \ref{No-Gap} and to  to move further on the analysis of Problem \eqref{mainPbOpt} in such cases. 
More precisely, we investigate the two-dimensional cases where $\Omega$ is either a rectangle, a disk or an angular sector. 

\begin{paragraph}{Case of a rectangle.}
Let $\alpha$, $\beta$ be two positive numbers. We investigate here the case where $\Omega = (-\alpha\pi/2,\alpha\pi/2)\times (-\beta\pi/2,\beta\pi/2)$ and we consider the normalized eigenfunctions of the Dirichlet-Laplacian defined by
\begin{equation}\label{EigenRect}
\phi_{n,k}(x,y)=\frac{2}{\pi \sqrt{\alpha\beta}}\sin \left(\frac{n}{\alpha}\left(x+\frac{\pi\alpha}{2}\right)\right)\sin\left(\frac{k}{\beta}\left(y+\frac{\pi\beta}{2}\right)\right),
\end{equation}
associated to the eigenvalue
$$
\lambda_{n,k}=\frac{n^2}{\alpha^2} + \frac{k^2}{\beta^2},
$$
for all $(n,k)\in(\N^*)^2$.
The notations we will use are summarized on Figure \ref{RectFig}.
\begin{figure}[h]
\begin{center}
\begin{tikzpicture}[scale=0.75]
\draw [black, very thin, fill=blue!10] (-4,2) --++ (8,0)--++(0,-4)--++(-8,0)-- cycle;
\draw[latex-latex,blue] (-4,1) --++ (8,0); 
\draw[blue] (1.5,1) node[below ] {$\alpha\pi$};
\draw[latex-latex,blue] (-3,2) --++ (0,-4);
\draw[blue] (-3,-0.5) node[right] {$\beta\pi$};
\draw[->] (0,-2.5)--++(0,5);
\draw (0,2.5) node[above] {$y$};
\draw[->] (-4.5,0)--++(+9,0);
\draw[->] (4.5,0) node[right] {$x$};
\draw (0,0) node [below left] {$O$};
\draw[red] (0.5,2) node [above right] {$\Sigma_2$};
\draw[red] (0.5,-2) node [below right] {$\Sigma_4$};
\draw[red] (-4,-0.5) node [left] {$\Sigma_3$};
\draw[red] (4,-0.5) node [right] {$\Sigma_1$};
\end{tikzpicture}
\end{center}
\caption{Case of a rectangle}\label{RectFig}
\end{figure}
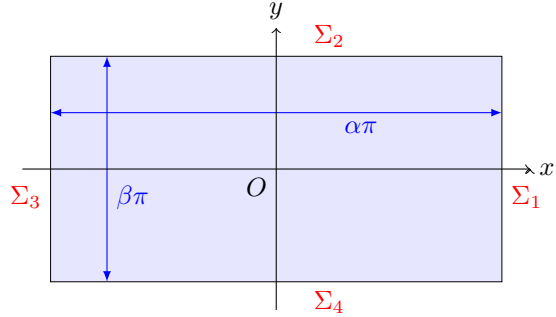

A straightforward computation yields 
\begin{multline*}
J_\infty(a)= \frac{4\alpha\beta }{\pi^2(n^2\beta^2+k^2\alpha^2)}\; \inf_{(n,k)\in\N^{*2}}  \displaystyle \left(   \frac{n^2}{\alpha^2}\int_{\Sigma_1\cup\Sigma_3}a(x,y)\sin^2\left(\frac{k}{\beta}\left(y+\frac{\pi\beta}{2}\right)\right)\, dy \right. \\
\left. +\frac{k^2}{\beta^2}\int_{\Sigma_2\cup\Sigma_4}a(x,y)\sin^2\left(\frac{n}{\alpha}\left(x+\frac{\pi\alpha}{2}\right)\right)dx \right).
\end{multline*}
Let us simplify the expression of $J_\infty(a)$. For $n\in \N^*$ and $a\in\Ub$, we set
\begin{eqnarray*}
A_{k,\beta}(a)&=&\int_{\Sigma_1\cup\Sigma_3} \hspace{-0.35cm}a(x,y)\sin^2\left(\frac{k}{\beta}\left(y+\frac{\pi\beta}{2}\right)\right)\,  dy\\
B_{n,\alpha}(a)&=& \int_{\Sigma_2\cup\Sigma_4} \hspace{-0.35cm} a(x,y)\sin^2\left(\frac{n}{\alpha}\left(x+\frac{\pi\alpha}{2}\right)\right)\, dx.
\end{eqnarray*} 
Using that $\frac{n^2}{\alpha^2}A_{k,\beta}(a) +\frac{k^2}{\beta^2}B_{n,\alpha}(a)  \geq  (\frac{n^2}{\alpha^2}+\frac{k^2}{\beta^2}) \min\{A_{k,\beta}(a),B_{n,\alpha}(a)\}$, we have
$$
J_\infty(a)\geq \frac{4}{\pi^2\alpha\beta} \inf_{(n,k)\in\N^{*2}} \left\{A_{k,\beta}(a),B_{n,\alpha}(a) \right\}.$$
The converse inequality is established by letting separately $n$ and $k$ go to $+\infty$ in the expression of $J_\infty(a)$  and using Riemann-Lebesgue lemma. Therefore, we obtain
\begin{equation}\label{CritSquare}
J_\infty(a) = \frac{4}{\pi^2\alpha\beta}\min \left( \inf_{k\in\N^*}A_{k,\beta}(a),\inf_{n\in\N^{*}}B_{n,\alpha}(a) \right) .
\end{equation}

\begin{proposition}\label{prop:rect}
Let $L^c_{n} = \min \left( \frac{2\alpha}{\alpha+\beta},\frac{2\beta}{\alpha+\beta}\right)$. There is no-gap between the optimal values for Problem \eqref{mainPbOpt} and its convexified version \eqref{defJrelax}, and 
$$
\max_{a\in\Ub} J_\infty(a)=\sup_{\chi_\Gamma \in\U} J_\infty(M\chi_\Gamma-M\chi_{\partial\Omega\backslash \Gamma})= \left\{\begin{array}{ll}
\frac{2L(\alpha+\beta)}{\pi\alpha\beta} & \textrm{if }|L|\leq L^c_{n},\\
\frac{2L^c_{n}(\alpha+\beta)}{\pi\alpha\beta}\operatorname{sgn}(L) & \textrm{if }|L|>L^c_{n}.
\end{array}\right.
$$
Finally, there exists a finite set $\mathcal{L}\subset [-1,1]$ such that the optimal design problem \eqref{mainPbOpt} has a solution if, and only if $L\in \mathcal{L}$. 
\end{proposition}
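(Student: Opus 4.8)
The plan is to exploit the decoupled structure of the criterion \eqref{CritSquare}. Observe that $A_{k,\beta}(a)$ depends on $a$ only through the \emph{trace sum} $p(y)=a(\alpha\pi/2,y)+a(-\alpha\pi/2,y)$ over the vertical sides, while $B_{n,\alpha}(a)$ depends only on $q(x)=a(x,\beta\pi/2)+a(x,-\beta\pi/2)$; for $a=M\chi_\Gamma-M\chi_{\partial\Omega\backslash\Gamma}\in\U$ these sums take values in the discrete set $\{-2M,0,2M\}$. I would first record, via $\sin^2=\tfrac12(1-\cos)$, the exact identity $A_{k,\beta}(a)=\tfrac12\int p-\tfrac12\int p(y)\cos\!\big(\tfrac{2k}{\beta}(y+\tfrac{\pi\beta}{2})\big)\,dy$, so that $A_{k,\beta}(a)\to\tfrac12\int p$ (Riemann--Lebesgue) and, crucially, $\inf_k A_{k,\beta}(a)=\tfrac12\int p$ \emph{if and only if} every cosine coefficient $\int p(y)\cos\!\big(\tfrac{2k}{\beta}(y+\tfrac{\pi\beta}{2})\big)\,dy$ is nonpositive; an analogous statement holds for $q$. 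Since $\cos\!\big(\tfrac{2k}{\beta}(y+\tfrac{\pi\beta}{2})\big)=(-1)^k\cos(\tfrac{2k}{\beta}y)$ is even in $y$, only the even part $p_s(y)=\tfrac12(p(y)+p(-y))$ enters these conditions.

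Next I would convert ``$\chi_\Gamma$ is a solution'' into a rigidity statement. Using the value formula established earlier in the Proposition together with the elementary bounds $\inf_k A_{k,\beta}(a)\le\tfrac12\int p\le M\beta\pi$ and $\inf_n B_{n,\alpha}(a)\le\tfrac12\int q\le M\alpha\pi$, a maximizer is forced to saturate the binding bound(s). In the balanced range $|L|\le L^c_{n}$ a short argument (the sum of the two upper bounds equals twice the optimal value) shows that attainment forces \emph{both} $\inf_k A_{k,\beta}(a)=\tfrac12\int p$ and $\inf_n B_{n,\alpha}(a)=\tfrac12\int q$, together with $\int p=\int q=\pi(\alpha+\beta)LM$. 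By the previous paragraph this means all the cosine coefficients of $p_s$ and of $q_s$ are nonpositive.

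The crux is then a rigidity lemma: \emph{a bounded piecewise-constant even function on the interval, all of whose cosine coefficients $(k\ge1)$ are of one sign, is constant}. I would prove this by an Abel/Poisson summation trick: expanding $p_s=\sum_{k\ge0}b_k\cos(\tfrac{2k}{\beta}y)$ in the complete cosine system on the half-interval, with $b_k\le0$ for $k\ge1$, the boundedness of the Abel means $\sum_k b_k r^k$ at the endpoint forces $\sum_k|b_k|<\infty$; hence $p_s$ has an absolutely convergent Fourier series and is continuous, so a piecewise-constant $p_s$ must be constant, equal to $\tfrac1{\beta\pi}\int p=\tfrac{(\alpha+\beta)LM}{\beta}$ (and likewise $q_s\equiv\tfrac{(\alpha+\beta)LM}{\alpha}$). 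Since $p$ is $\{-2M,0,2M\}$-valued, $p_s$ takes values in $\{-2M,-M,0,M,2M\}$, so constancy pins $\tfrac{(\alpha+\beta)L}{\beta}\in\{-2,-1,0,1,2\}$ and, from $q$, $\tfrac{(\alpha+\beta)L}{\alpha}\in\{-2,-1,0,1,2\}$. Intersecting these finitely many admissible values of $L$ over the two families, and adjoining the trivial endpoints $L=\pm1$ (where $\U$ is a singleton), yields a finite set $\mathcal{L}$; the converse inclusion is obtained by exhibiting, for each such $L$, an explicit $\Gamma$ whose trace sums realize the forced constants, so that the optimum is attained.

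I expect two steps to be the main obstacles. The first is making the rigidity lemma fully rigorous: one must argue carefully on the half-period where $\{\cos(\tfrac{2k}{\beta}y)\}$ is a complete orthogonal family, justify the interchange of Abel limits needed to deduce summability from boundedness, and then pass from ``continuous and piecewise constant'' to ``constant''. The second, and genuinely more delicate, step is the saturated regime $|L|>L^c_{n}$, where the optimal value is constant in $L$ and only the shorter pair of sides binds (forcing $q\equiv 2M$ when $\alpha<\beta$): here attainment no longer forces the equality $\inf_k A_{k,\beta}(a)=\tfrac12\int p$ but only the one-sided inequality $\inf_k A_{k,\beta}(a)\ge \tfrac12 L^c_n\pi(\alpha+\beta)M$, so the rigidity mechanism above does not apply verbatim and a finer obstruction must be identified to confine $L$ to $\mathcal{L}$ in that range. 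This saturated case is where the bulk of the technical work, and the principal difficulty, will lie.
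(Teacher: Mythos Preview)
Your overall strategy for the finite-set part is sound and, compared with the paper (which simply invokes the Fourier-series argument of \cite{PTZObs1}), your Abel/Poisson rigidity lemma is a more self-contained route and close in spirit to the disk-case proof in Section~\ref{proof:propmainDisk}. However, there is a concrete sign error that, as written, breaks the argument. From
\[
\cos\!\Big(\tfrac{2k}{\beta}\big(y+\tfrac{\pi\beta}{2}\big)\Big)=(-1)^k\cos\!\Big(\tfrac{2k}{\beta}y\Big),
\]
the attainment condition $\inf_k A_{k,\beta}(a)=\tfrac12\int p$ is equivalent to $(-1)^k b_k\le 0$ for all $k\ge 1$, \emph{not} to $b_k\le 0$, where $p_s=\sum_{k\ge 0}b_k\cos(\tfrac{2k}{\beta}y)$. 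Consequently the Abel mean $\sum_k b_k r^k$ evaluated at $y=0$ is not one-signed and boundedness there does not yield $\sum|b_k|<\infty$. The fix is easy: evaluate the Abel--Poisson mean at the other endpoint $y=\beta\pi/2$, where $\cos(\tfrac{2k}{\beta}y)=(-1)^k$; positivity of the Poisson kernel and the one-sided condition $(-1)^k b_k\le 0$ then give $\sum_{k\ge 1}|b_k|\le b_0+\|p_s\|_\infty<\infty$, hence $p_s$ is continuous, and since $p_s$ takes values in the finite set $\{-2M,-M,0,M,2M\}$ it is constant. With this correction your derivation of $\tfrac{(\alpha+\beta)L}{\beta}\in\{-2,-1,0,1,2\}$ (and the companion condition from $q$) goes through for $|L|\le L^c_n$.

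Two further remarks. First, your plan tacitly uses the optimal-value formula and the no-gap equality as inputs; the paper proves the former via Rellich-admissible densities and the truncation in the saturated range, and deduces the latter from the scalar identity \eqref{eq2233}, so you should either reproduce those short arguments or flag that you are assuming them. Second, for the saturated range $L^c_n<|L|<1$ the paper also defers to \cite{PTZObs1}; note that with the explicit description of $\mathcal{L}$ given just after Proposition~\ref{prop:rect}, no $L$ in $(L^c_n,1)$ lies in $\mathcal{L}$, so the task there is purely a nonexistence statement. Your rigidity mechanism does not apply verbatim (only $q\equiv 2M$ is forced, while on the vertical sides you only get the one-sided bound $\sup_k(-1)^k b_k\le \int p-2\pi\alpha M$), and an additional obstruction is indeed needed there.
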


The proof of this proposition is done in Section \ref{sec:proofCaspart1}. The precise determination of $\mathcal{L}$ could easily be done since it is possible to derive from the proof a construction of all solutions. Such a construction, although a bit technical leads to
$$
\mathcal{L}=\left\{0,\frac{\pm\alpha}{\alpha+\beta}, \frac{\pm 2\alpha}{\alpha+\beta},\pm 1 \right\} \cap \left\{ 0,\frac{\pm\beta}{\alpha+\beta}, \frac{\pm2\beta}{\alpha+\beta} ,\pm 1\right\}.
$$
Two examples of solutions are pictured on Figure \ref{fig:squarePartsol} in the case where $\alpha=\beta=1$ and $L=\frac{1}{2}\}$. 

Finally, it is interesting to note that the conclusion of Lemma \ref{prop:Linftynormmax} does not hold true for such a choice of domain $\Omega$. This emphasizes the influence of the regularity of $\partial\Omega$ on the positive number $L_n^c$.
\begin{figure}[h]
\begin{center}
\includegraphics[width=4cm]{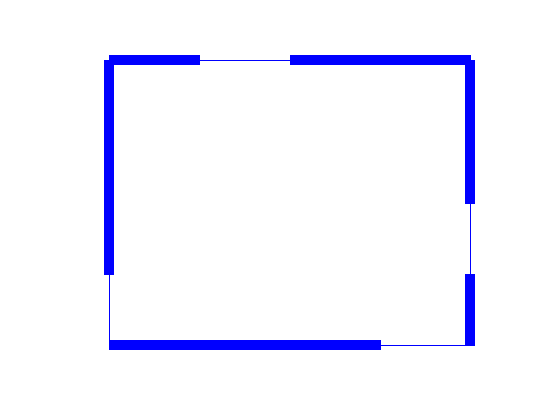}\quad 
\includegraphics[width=4cm]{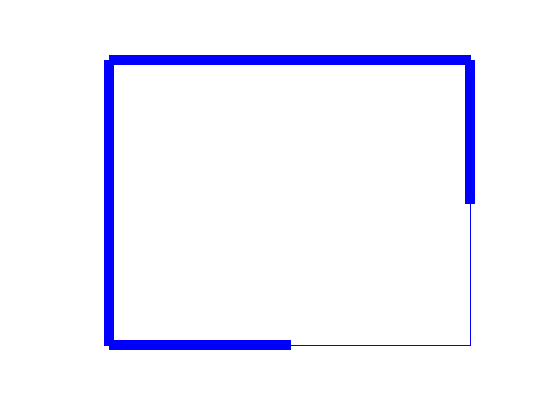}
\caption{Two particular solutions for $L=1/2$. \label{fig:squarePartsol}}
\end{center}
\end{figure}
\end{paragraph}

%%%%%%%%%%%%%%%%%%%%%%%%%%%%%%%%%%%%%%%%%%%%%%%%%%%%%%%%%%%%%%%%%%

\begin{paragraph}{Case of the unit disk.}
We investigate here the case where $\Omega$ is the unit disk of $\R^2$ and we consider the normalized eigenfunctions of the Dirichlet-Laplacian given by the  triply indexed sequence
\begin{equation}\label{def_Phi_jkm}
\phi_{jkm}(r,\theta) = 
\left\{ \begin{array}{ll}
R_{0k}(r)/\sqrt{2\pi} & \ \textrm{if}\ j=0,\\
R_{jk}(r)Y_{jm}(\theta) & \ \textrm{if}\ j\geq 1,
\end{array} \right.
\end{equation}
for $j\in\N$, $k\in\N^*$ and $m=1,2$, where $(r,\theta)$ are the usual polar coordinates.
The functions $Y_{jm}(\theta)$ are defined by $Y_{j1}(\theta)=\frac{1}{\sqrt{\pi}}\cos(j\theta)$ and $Y_{j2}(\theta)=\frac{1}{\sqrt{\pi}}\sin(j\theta)$, and $R_{jk}$ by
\begin{equation}\label{def_R_jk}
R_{jk}(r) = \sqrt{2}\,\frac{J_j(z_{jk}r)}{\vert J'_{j}(z_{jk}) \vert},
\end{equation}
where $J_j$ is the Bessel function of the first kind of order $j$, and $z_{jk}>0$ is the $k^\textrm{th}$-zero of $J_{j}$.
The eigenvalues of the Dirichlet-Laplacian are given by the double sequence of $-z_{jk}^2$ and are of multiplicity $1$ if $j=0$, and $2$ if $j\geq 1$.

Easy computations show that for every $ a \in \Ub$, the criterion $J_\infty(a)$ rewrites, up to a multiplicative constant, 
\begin{equation}\label{CriterionDisk}
J_\infty(a)= \min \left( \inf_{n\geq 1} \int_0^{2\pi}a(\theta)\cos(n\theta)^2\, d\theta , \ \inf_{n\geq 1} \int_0^{2\pi}a(\theta)\sin(n\theta)^2\, d\theta  \right) .
\end{equation}

It is notable that all the assumptions of Theorem \ref{OptimalValue} and Theorem \ref{No-Gap} are fulfilled. In the following proposition, the optimal design problem \eqref{mainPbOpt} is completely solved in this particular case.

\begin{proposition}\label{mainDisk}
There is no-gap between the optimal values for Problem \eqref{mainPbOpt} and its convexified version \eqref{defJrelax}, and
$$ \max_{a\in\Ub}J_\infty(a)=J_\infty(L)=\pi L = \sup_{\chi_\Gamma \in\U} J_\infty(M\chi_\Gamma)$$
and $J_\infty(L)=\pi  L$.
Moreover the optimal design problem \eqref{mainPbOpt} has a solution if and only if $L\in\{ 0,\pm\frac{1}{2},\pm1\}$.
\end{proposition}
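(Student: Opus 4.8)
The plan is to first turn \eqref{CriterionDisk} into an explicit formula for $J_\infty$ on $\Ub$ (as was done for $J_N$ in Remark~\ref{SwitchDisk}), then read off the no-gap from Theorem~\ref{No-Gap}, and finally characterise solvability of \eqref{mainPbOpt} by a Fourier folding argument. By positive homogeneity of $J_\infty$ and linearity of the constraints in $M$ I may assume $M=1$, so that every extremal density reads $M\chi_\Gamma-M\chi_{\partial\Omega\setminus\Gamma}=2\chi_\Gamma-1$ with $\Hn(\Gamma)=(L+1)\pi$. Writing $\cos^2(n\theta)=\tfrac12(1+\cos 2n\theta)$ and $\sin^2(n\theta)=\tfrac12(1-\cos 2n\theta)$ in \eqref{CriterionDisk}, using $\int_0^{2\pi}a\,d\theta=2\pi L$, and applying the elementary identity $\min(\inf_n c_n,-\sup_n c_n)=-\sup_n|c_n|$ to $c_n=\int_0^{2\pi}a\cos(2n\theta)\,d\theta$, I obtain
\begin{equation*}
J_\infty(a)=\pi L-\frac12\,\sup_{n\geq 1}\left|\int_0^{2\pi}a(\theta)\cos(2n\theta)\,d\theta\right|,\qquad a\in\Ub .
\end{equation*}
In particular $J_\infty(a)\leq \pi L$ for every $a\in\Ub$, with equality for the constant density $a\equiv L$; hence $\max_{a\in\Ub}J_\infty(a)=J_\infty(L)=\pi L$.

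For the no-gap I would simply invoke Theorem~\ref{No-Gap}. The unit disk satisfies the (UB) and (QUEB) properties (recalled and checked in Section~\ref{sec:comments}), and a direct computation gives $\inf_{x_0\in\R^n}\ell_{\partial\Omega}(x_0)=\ell_{\partial\Omega}(0)=R(\Omega)=1$, so that $L^c_n=\min\{1,\tfrac{n|\Omega|}{\Hn(\partial\Omega)}\}=\min\{1,1\}=1$. Thus the hypotheses of Theorem~\ref{No-Gap} hold for every $L\in[-1,1]$, which yields $\sup_{\chi_\Gamma\in\U}J_\infty(M\chi_\Gamma-M\chi_{\partial\Omega\setminus\Gamma})=\pi L$, i.e. the optimal values of \eqref{mainPbOpt} and \eqref{defJrelax} coincide.

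The remaining and central point is to decide when the supremum in \eqref{mainPbOpt} is \emph{attained}. By the displayed formula, $\chi_\Gamma\in\U$ is a solution if and only if $J_\infty(2\chi_\Gamma-1)=\pi L$, which is equivalent to $\int_\Gamma\cos(2n\theta)\,d\theta=0$ for every $n\geq 1$, i.e. to the vanishing of all even-indexed cosine Fourier coefficients of $\chi_\Gamma$. Since $\cos(2n\theta)$ is $\pi$-periodic, I would fold the circle onto $(0,\pi)$ by setting $g(\theta)=\chi_\Gamma(\theta)+\chi_\Gamma(\theta+\pi)$, so that these conditions become $\int_0^\pi g(\theta)\cos(2n\theta)\,d\theta=0$ for all $n\geq 1$. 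As $\{1,\cos(2n\theta),\sin(2n\theta)\}_{n\geq 1}$ is a complete orthogonal basis of $L^2(0,\pi)$ and $\tfrac1\pi\int_0^\pi g=\tfrac1\pi\Hn(\Gamma)=L+1$, this forces the cosine part of $g$ to vanish, so that $g(\theta)-(L+1)$ is a combination of the functions $\sin(2n\theta)$, each antisymmetric about $\theta=\pi/2$; therefore $g(\theta)+g(\pi-\theta)=2(L+1)$ for a.e. $\theta\in(0,\pi)$.

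The argument then closes by an integrality observation, which I expect to be the main (though short) obstacle. The folded multiplicity $g$ takes values in $\{0,1,2\}$, so $g(\theta)+g(\pi-\theta)$ is integer-valued; being a.e. equal to the constant $2(L+1)$, that constant must be an integer in $\{0,1,2,3,4\}$, whence $L\in\{-1,-\tfrac12,0,\tfrac12,1\}$. Conversely, for each such value the arc $\Gamma=[0,(L+1)\pi)$ has the prescribed measure $(L+1)\pi$ and satisfies $\int_\Gamma\cos(2n\theta)\,d\theta=\tfrac1{2n}\sin\!\big(2n(L+1)\pi\big)=0$ for all $n\geq 1$, since $2(L+1)\in\Z$; it is therefore a solution. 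This shows that \eqref{mainPbOpt} is solvable exactly for $L\in\{0,\pm\tfrac12,\pm1\}$. The crux is precisely this folding-plus-integrality step: the hard binary constraint enters only through the requirement that $g$ be $\{0,1,2\}$-valued, and it is the incompatibility of this integrality with the a.e.-constant symmetry relation $g(\theta)+g(\pi-\theta)=2(L+1)$ that excludes every other value of $L$.
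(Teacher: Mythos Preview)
Your proof is correct and follows the same overall strategy as the paper: reduce optimality to the vanishing of the even cosine Fourier coefficients, then exploit a folding/integrality argument to pin down $L$. The packaging differs slightly. For the necessary direction the paper works with $a=2\chi_\Gamma-1$ and performs two successive symmetrizations ($a_e(\theta)=\tfrac12(a(\theta)+a(2\pi-\theta))$, then $\tilde a(\theta)=\tfrac12(a_e(\theta)+a_e(\pi-\theta))$), using concavity of $J_\infty$ and the symmetries of the disk to argue that $\tilde a$ is still a maximizer, hence constant equal to $L$, while taking values in $\{-1,-\tfrac12,0,\tfrac12,1\}$. Your single $\pi$-fold $g(\theta)=\chi_\Gamma(\theta)+\chi_\Gamma(\theta+\pi)$ together with the Fourier basis on $(0,\pi)$ reaches the same conclusion more directly, without invoking concavity or the invariance of the criterion. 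For the converse the paper simply refers to constructions in \cite{PTZObs1,henrot_hebrardSCL}, whereas your explicit arc $\Gamma=[0,(L+1)\pi)$ with the check $\int_\Gamma\cos(2n\theta)\,d\theta=\tfrac1{2n}\sin(2n(L+1)\pi)=0$ is self-contained. Both routes are short; yours avoids the external references and the appeal to convexity.
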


This proposition is proved in Section \ref{proof:propmainDisk}.
Several particular solutions in this case are pictured on Figure \ref{Fig:diskSolPart}.

\begin{figure}[h]
\begin{center}
\includegraphics[height=4cm]{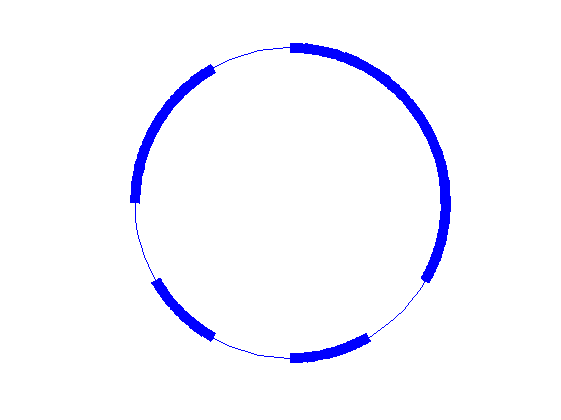}\quad
\includegraphics[height=4cm]{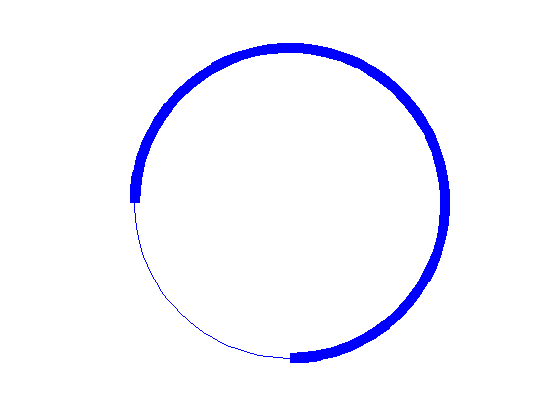}
\caption{Particular solutions for $L=1/2$. \label{Fig:diskSolPart}}
\end{center}
\end{figure}

\begin{remark}
It is also interesting to raise the same question when $\Omega$ is an ellipse. Numerical simulations suggest in particular that the optimal value behaves differently before and after the critical value $L^c_{n}$.
On Figure \ref{fig1}, we represent the graph of the optimal value for Problem \eqref{defJrelax} with respect to the constraint parameter $L$ for two ellipses. The optimal values are computed by using a large number of  random elements in $\Ub$.

\begin{figure}[h]
\begin{center}
\includegraphics[scale=0.3]{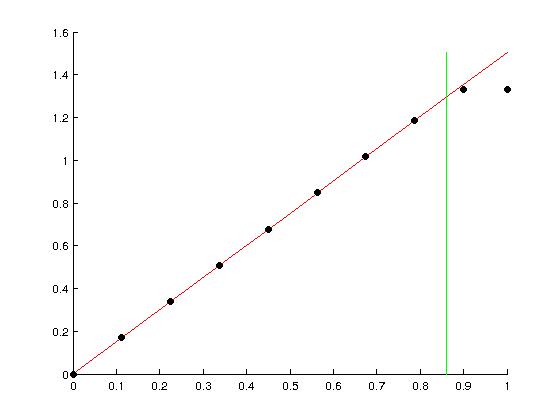} \quad \quad
\includegraphics[scale=0.3]{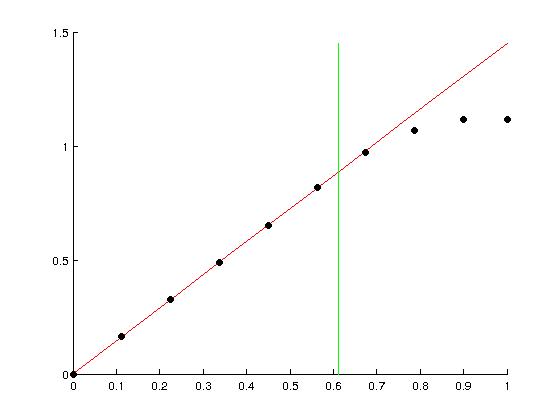}
\end{center}
\caption{Optimal value of $J_\infty$ with respect to $L$ for two ellipses respectively of radius and eccentricity $(1,1)$ and $(0.5,2)$ (dots), predicted behavior given by the Rellich functions (red) and plot of the straight line $L=L^c_{n}$, where $L^c_{n}$ is the critical value introduced in Theorem \ref{OptimalValue} (green).\label{fig1}}
\end{figure}
\end{remark}

\end{paragraph}

\begin{paragraph}{Case of an angular sector.}
We investigate here the case where $\Omega$ is the angular sector of $\R^2$ defined by
$$
\Omega=\lbrace (r,\theta), -\theta_1\leq\theta \leq \theta_1, 0\leq r\leq R \rbrace,
$$
with $\theta_1 \in (0,\frac{\pi}{4}]$ and $R>0$. 

We consider the normalized eigenfunctions $(\varphi_{n,k})_{(n,k)\in \N^{*2}}$ of the Dirichlet-Laplacian given by 
$$ \varphi_{n,k}(r,\theta) = \frac{\sqrt{2}}{R \sqrt{\theta_1}} \frac{J_{\pi n /2\theta_1}(z_{n,k}\frac{r}{R})}{|J_{\pi n/2\theta_1}'(z_{n,k})|}  \sin\left(\frac{n\pi}{2\theta_1}(\theta+\theta_1)\right), $$ 
where $z_{n,k}$ denotes the $k$-th zero of the first kind Bessel function $J_{\pi n /\theta_1}$, associated to the eigenvalue $\lambda_{n,k}=\frac{z_{n,k}^2}{R^2}$ (see, e.g., \cite{BonnaillieLena}).

A tedious but straightforward computation leads to
\begin{multline*}
\lefteqn{J_\infty(a)=  \frac{2}{R^2\theta_1}  \inf_{(n,k)\in\N^{*2}}  \left( \int_{-\theta_1}^{\theta_1}a(R,\theta)\sin\left(\frac{n\pi}{2\theta_1}(\theta+\theta_1)\right)^2R d\theta \right.} \\
\left.   +  \frac{R^2n^2\pi^2}{4 z_{n,k}^2\theta_1^2}  \int_0^R \frac{J_{\pi n /2\theta_1}(z_{n,k}\frac{r}{R})^2}{|J_{\pi n/2\theta_1}'(z_{n,k})|^2} \frac{(a(r,-\theta_1)+a(r,\theta_1))}{r^2} dr   \right) 
\end{multline*}
for every $a\in \Ub$, with 
$$
\Ub = \left\lbrace a\in L^\infty(\partial\Omega,[-M,M]) \ \mid\ \int_{-\theta_1}^{\theta_1}a(R,\theta)Rd\theta + \int_0^R \left[  a(r,-\theta_1)+a(r,\theta_1) \right] dr = (2\theta_1+2)RL \right\rbrace .
$$
The situation on the straight sides of the angular sector is a bit intricate because of the presence of Bessel functions. For this reason, even if we completely solve the convexified optimal design problem \eqref{defJrelax}, we only provide a partial answer for Problem \eqref{mainPbOpt}. 
\begin{proposition}\label{prop:angsectRes}
Let $L^c_{n} = \min \{1,\frac{\theta_1 (1+\tan \theta_1)}{(1+\theta_1)\tan \theta_1}\}$. We have
\begin{equation}\label{NoBifurcation}
\max_{a\in\Ub} J_\infty(a)=
\left\{\begin{array}{ll}
 \frac{L\Hn(\partial\Omega)}{|\Omega|} &  \text{ if }  |L|\leq L^c_{n} , \\
\frac{(L+L^c_{n})\Hn(\partial\Omega)}{2|\Omega|} & \text{ if } L^c_{n} < |L|.
\end{array}\right.
\end{equation}
Moreover, if $|L|\geq L^c_{n}$, there is no-gap between the optimal values for Problem \eqref{mainPbOpt} and its convexified version \eqref{defJrelax}.
\end{proposition}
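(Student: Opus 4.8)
The plan is to split the analysis into the geometric reduction and the two regimes $|L|\le L^c_n$ and $|L|>L^c_n$, treating the no-gap claim last.

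First, I would record the elementary data of the sector, namely $|\Omega|=R^2\theta_1$ and $\Hn(\partial\Omega)=2R(\theta_1+1)$, so that $n|\Omega|/\Hn(\partial\Omega)=R\theta_1/(\theta_1+1)$. As the aperture satisfies $2\theta_1\le\pi/2$, the domain is convex and Theorem \ref{OptimalValue} applies; the only quantity left to compute is $\inf_{x_0}\ell_{\partial\Omega}(x_0)$. Since $x_0\mapsto\ell_{\partial\Omega}(x_0)$ is convex and $\Omega$ is symmetric about its bisector, the infimum is attained at some $x_0=(d,0)$, and a direct evaluation of $\langle x-x_0,\nu(x)\rangle$ gives $R-d\cos\theta$ on the arc and the constant $d\sin\theta_1$ on each straight side; hence $\ell_{\partial\Omega}(d,0)=\max\{R-d\cos\theta_1,\,d\sin\theta_1\}$. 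Minimising this over $d$ produces the balance point $d^\star=R/(\cos\theta_1+\sin\theta_1)$ and $\inf_{x_0}\ell_{\partial\Omega}=R\tan\theta_1/(1+\tan\theta_1)$. Plugging this into the definition of $L^c_n$ in Theorem \ref{OptimalValue} gives precisely $L^c_n=\min\{1,\theta_1(1+\tan\theta_1)/((1+\theta_1)\tan\theta_1)\}$, and Theorem \ref{OptimalValue} then yields the value $\frac{L\Hn(\partial\Omega)}{|\Omega|}$ together with the optimality of the Rellich density $\tilde a_{x_0^\star}$ for $|L|\le L^c_n$. This settles the first line of \eqref{NoBifurcation}.

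For $|L|>L^c_n$ I would work from the explicit expression $J_\infty(a)=\frac{2}{R^2\theta_1}\inf_{(n,k)}(P_n(a)+Q_{n,k}(a))$, where $P_n$ is the arc integral and $Q_{n,k}$ the straight-sides integral, with $\nu=\pi n/2\theta_1$. Two asymptotic facts drive the analysis. On the arc, the Riemann--Lebesgue lemma gives $P_n(a)\to\frac12\int_{\mathrm{arc}}a\,d\Hn$ as $n\to\infty$. On the sides, the crucial observation is that the Rellich identity \eqref{Tao}, applied to the Rellich function $a_{x_0}$ (which is \emph{constant} on each side), forces $P_n+Q_{n,k}=R^2\theta_1$ for every $(n,k)$, so that the total mass of the sides measure $\mu_{n,k}(dr)=\frac{\nu^2R^2}{z_{n,k}^2}\frac{J_\nu(z_{n,k}r/R)^2}{J_\nu'(z_{n,k})^2\,r^2}\,dr$ is a $k$-independent constant $\kappa_n=\frac{\nu^2R}{2(\nu^2-1/4)}$, decreasing to $R/2$ as $n\to\infty$; meanwhile the small-argument behaviour $J_\nu(s)\sim s^\nu$ (with $\nu\ge 2$ since $\theta_1\le\pi/4$) forces each $\mu_{n,k}$ to vanish at the apex, while as $k$ grows its mass concentrates around the Bessel turning point $r\simeq R\nu/z_{n,k}$, which sweeps $(0,R]$. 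The structural upshot is that the boundary weights reachable in the limit are $\tfrac12$ on the arc and nonnegative sides densities compelled to vanish at the vertex; this ``hole at the apex'' is the obstruction that prevents the flat weight and creates the saturation.

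With this reduction in hand I would prove two matching estimates. For the upper bound I would, for a fixed $a\in\Ub$, choose an index sequence adapted to $a$ (letting $n\to\infty$ and selecting $k$ so that $\mu_{n,k}$ concentrates where $a$ is largest on the sides), so that $J_\infty(a)$ is dominated by $\frac{2}{R^2\theta_1}$ times $\frac12\int_{\mathrm{arc}}a$ plus a sides term capped by the vanishing-at-apex constraint; maximising the resulting linear functional over $\Ub$ under the integral constraint $\int_{\partial\Omega}a\,d\Hn=LM\Hn(\partial\Omega)$ (a bathtub computation) should give the announced $\frac{(L+L^c_n)\Hn(\partial\Omega)}{2|\Omega|}$. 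For the lower bound I would exhibit the truncated Rellich density $a^\star=\min(M,\max(-M,\tilde a_{x_0^\star}))$ and verify, using the $k$-independence of $\kappa_n$ and the limits above, that $J_\infty(a^\star)$ attains this value; the two bounds then give the second line of \eqref{NoBifurcation}. Finally, for the no-gap statement on $\{|L|\ge L^c_n\}$ I would transpose the maximising-sequence construction of Theorem \ref{No-Gap} (its second, easier case): starting from an open Lipschitz subset $\Gamma_0$ of prescribed measure $\frac{L+1}{2}\Hn(\partial\Omega)$, I would perform the same local $\delta$-cone perturbations to raise the low-frequency values $I_j(\Gamma)$ while keeping the high-frequency ones above the target, and iterate to build a bang-bang maximising sequence $(\chi_{\Gamma_k})$. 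What makes this work \emph{without} a (QUEB) hypothesis (which the sector is not expected to satisfy) is that for $|L|\ge L^c_n$ the target lies strictly below the equidistribution value $\frac{L\Hn(\partial\Omega)}{|\Omega|}$, so the explicit trigonometric and Bessel asymptotics of $\lambda_{n,k}^{-1}(\partial_\nu\varphi_{n,k})^2$ already guarantee that the high modes are favourable and no sharp high-frequency control is needed. The delicate, computational heart of the whole argument --- and the step I expect to be the main obstacle --- is the fine analysis of $Q_{n,k}$: quantifying the apex concentration and vanishing rate of the Bessel densities and converting the resulting vertex ``hole'' into the precise capped contribution responsible for the factor $(L+L^c_n)/2$.
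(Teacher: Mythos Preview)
Your computation of $L^c_n$ and the first line of \eqref{NoBifurcation} via Theorem~\ref{OptimalValue} are correct and match the paper (which reproves the upper bound by a bespoke weak-ergodicity lemma, but your shortcut is legitimate since the sector is convex).

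The genuine gap is in the $|L|>L^c_n$ analysis. Your upper-bound sketch is directionally wrong: to bound $J_\infty(a)=\inf_{(n,k)}(P_n+Q_{n,k})$ from \emph{above} you must exhibit indices making $P_n+Q_{n,k}$ \emph{small}, so concentrating $\mu_{n,k}$ where $a$ is \emph{largest} on the sides gives a large $Q_{n,k}$ and proves nothing. The paper's device, which you are missing, is to write $a=a_c+h$ where $a_c$ is the critical maximizer at $L=L^c_n$ (so $a_c\equiv 1$ on the straight sides, hence $h\le 0$ there). Applying the Rellich identity to $a_c$ eliminates the Bessel/sides integral from $j_{n,k}$ altogether, and since the remaining sides contribution carries the nonpositive factor $h$, one is left with a pure arc estimate $j_{n,k}(a)\le \frac{(L+L^c_n)\Hn(\partial\Omega)}{2|\Omega|}-\frac{1}{R^2\theta_1}\int_{-\theta_1}^{\theta_1} h(R,\theta)\cos(\frac{n\pi}{\theta_1}(\theta+\theta_1))\,R\,d\theta$, and Riemann--Lebesgue finishes. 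No Bessel asymptotics or ``hole at the apex'' analysis is needed for this direction. Your lower-bound candidate is also off: the pointwise truncation $\min(M,\max(-M,\tilde a_{x_0^\star}))$ does not satisfy the integral constraint, so it is not in $\Ub$. The actual maximizer is simpler --- $a^\star\equiv 1$ on $\Sigma_1\cup\Sigma_2$ and a suitable constant on the arc --- and one checks directly that $J_\infty(a^\star)$ attains the announced value.

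For the no-gap part you are proposing far more machinery than needed. Because the convexified maximizer is already bang-bang ($\equiv 1$) on both straight sides, the only thing left to approximate is the \emph{constant} on the arc by characteristic functions of the right measure; this is the one-dimensional construction from \cite{PTZObs1}, and no $\delta$-cone perturbations, high/low frequency splitting, or substitute for (QUEB) are required.
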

Notice that we were not able to conclude in the case where $|L|<L^c_{n}$. The proof of this proposition is done in Section \ref{proof:propSectAng}.
\end{paragraph}

%%%%%%%%%%%%%%%%%%%%%%%%%%%%%%%%%%%%%%%%%%%%%%%%%%%%%%%%%%%%%%%%%%
%%%%%%%%%%%%%%%%%%%%%%%%%%%%%%%%%%%%%%%%%%%%%%%%%%%%%%%%%%%%%%%%%%
%%%%%%%%%%%%%%%%%%%%%%%%%%%%%%%%%%%%%%%%%%%%%%%%%%%%%%%%%%%%%%%%%%%%%%%%%%%%%%%%%%

\section{Conclusion and further comments}\label{FurtherComments}

\subsection{Relationships with spectral shape sensitivity analysis}\label{sec:shapesens}

Theorem \ref{prop:metz1756} happens to have interesting consequences in terms of spectral shape sensitivity analysis that provide another motivation of our study. In particular, we will investigate the problem of minimizing the sensitivity of (Dirichlet-Laplacian) eigenvalues with respect to shape perturbations, searching whether there exists a domain such that any perturbation of it would make all eigenvalues decrease.

Assume that $\Omega$ is a bounded connected domain with a boundary at least of class $\mathcal{C}^2$. Let $\mathbf{V}\in W^{3,\infty}(\R^n, \R^n)$ be a vector field. Then, the mapping $\Phi_{\mathbf{V}}=\operatorname{Id}+\mathbf{V}$ is a diffeomorphism provided that $\Vert \mathbf{V}\Vert_{3,\infty}\leq \varepsilon$ for some $\varepsilon>0$ small enough (see, e.g., \cite{henrot-pierre}). Moreover, under this condition, $\Phi_{\mathbf{V}}(\Omega)$ is a bounded connected domain having a $\mathcal{C}^2$ boundary. 

Let us assume that the Dirichlet-Laplacian spectrum of $\Omega$ is simple (in the sense that it consists of simple eigenvalues). It is well known that this assumption is generic with respect to the domain $\Omega$ (see, e.g., \cite{micheletti,ule,hillairet-judge}). 

In a nutshell, the shape derivative of $\lambda_j(\Omega)$, denoted $\langle d\lambda_j(\Omega),\mathbf{V}\rangle$, is the first-order term in the asymptotic expansion of $\lambda_j(\Phi_{\mathbf{V}}(\Omega))$ with respect to $\mathbf{V}$,  whenever it exists. Under the previous assumptions on $\Omega$, there holds
$$
\langle d\lambda_j(\Omega),\mathbf{V}\rangle =\lim_{t\searrow 0}\frac{J_\infty(\Phi_{t\mathbf{V}}(\Omega))-J_\infty(\Omega)}{t}=-\int_{\partial \Omega} \left(\frac{\partial \phi_j}{\partial \nu}\right)^2(\mathbf{V}\cdot \nu ) \, d\Hn
$$
and thus
$$
J_\infty(\mathbf{V}\cdot \nu )=-\sup_{j\geq 1}\frac{1}{\lambda_j}\langle d\lambda_j(\Omega),\mathbf{V}\rangle.
$$

We will  provide a partial answer to the following question:
\begin{quote}
\textsl{Do there exist bounded connected domains of $\R^n$ that are spectrally monotonically sensitive, in the sense that a perturbation $\mathbf{V}$ chosen as previously (preserving in particular the volume of the domain) makes all eigenvalues non-increase/decrease?}
\end{quote}
It is easy to see that, if $\Omega$ denotes any minimizer of a Dirichlet-Laplacian eigenvalue, which is for instance the case whenever $\Omega$ denotes a ball according to the Faber-Krahn inequality (see, e.g., \cite{henrot-pierre}), a perturbation field $\mathbf{V}$ enjoying the property above does not exist.

The following result is a byproduct of Theorem \ref{prop:metz1756}, dealing with shape sensitivity of the eigenvalues at the first-order.

\begin{corollary}\label{corSO}
Let us assume that $\Omega$ has a $\mathcal{C}^{2}$ boundary and that the spectrum of $\Omega$ consists of simple eigenvalues. Then, one has
$$
\sup_{j\geq 1}\langle d\lambda_j(\Omega),\mathbf{V}\rangle\geq 0
$$
for every vector field $\mathbf{V}\in W^{3,\infty}(\R^n,\R^n)$ such that $\int_{\partial \Omega}\mathbf{V}\cdot \nu \, d\Hn=0$. In other words, it is not possible to make all the eigenvalues $\lambda_j(\Omega)$ decrease at the first order under the action of a diffeomorphism $\Phi_{\mathbf{V}}$ preserving the volume of $\Omega$.
\end{corollary}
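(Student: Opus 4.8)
The plan is to read Corollary \ref{corSO} as an immediate consequence of the upper bound in Theorem \ref{prop:metz1756}, once the shape-derivative dictionary recalled just above is in force. First I would fix a vector field $\mathbf{V}\in W^{3,\infty}(\R^n,\R^n)$ with $\int_{\partial\Omega}\mathbf{V}\cdot\nu\,d\Hn=0$ and set $a=\mathbf{V}\cdot\nu$. Since $\partial\Omega$ is of class $\mathcal{C}^2$ and $\mathbf{V}\in W^{3,\infty}$, the trace $a$ belongs to $L^\infty(\partial\Omega)$, so it is an admissible argument of $J_\infty$. The point is that the volume-preservation condition is precisely the vanishing of the first-order volume variation, which equals $\int_{\partial\Omega}\mathbf{V}\cdot\nu\,d\Hn$; that is, $\int_{\partial\Omega}a\,d\Hn=0$.

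With this identification, Theorem \ref{prop:metz1756} yields $J_\infty(a)\leq \frac{2}{n|\Omega|}\int_{\partial\Omega}a\,d\Hn=0$. Plugging in the identity $J_\infty(\mathbf{V}\cdot\nu)=-\sup_{j\geq1}\frac{1}{\lambda_j}\langle d\lambda_j(\Omega),\mathbf{V}\rangle$ established above, this reads $\sup_{j\geq1}\frac{1}{\lambda_j}\langle d\lambda_j(\Omega),\mathbf{V}\rangle\geq 0$. Since every eigenvalue $\lambda_j$ is positive, the weight $1/\lambda_j$ does not alter the sign of each term $\langle d\lambda_j(\Omega),\mathbf{V}\rangle$, so the nonnegativity of the weighted supremum transfers to $\sup_{j\geq1}\langle d\lambda_j(\Omega),\mathbf{V}\rangle\geq 0$, which is the claim. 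In particular one cannot have $\langle d\lambda_j(\Omega),\mathbf{V}\rangle<0$ for every $j$, i.e.\ no volume-preserving infinitesimal perturbation can make all Dirichlet eigenvalues decrease at first order.

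The conceptual content is therefore entirely carried by Theorem \ref{prop:metz1756}, and the corollary requires no new estimate. The two points deserving a careful word are, first, the justification of the Hadamard formula $\langle d\lambda_j(\Omega),\mathbf{V}\rangle=-\int_{\partial\Omega}(\partial\phi_j/\partial\nu)^2(\mathbf{V}\cdot\nu)\,d\Hn$, which is exactly where the simplicity of the spectrum and the $\mathcal{C}^2$ regularity enter (simplicity ensures differentiability of each eigenvalue branch and rules out crossings, so that the trace $\partial\phi_j/\partial\nu$ depends smoothly on the perturbation); and second, the passage from the weighted to the unweighted supremum. The latter is transparent when $\sup_j\frac{1}{\lambda_j}\langle d\lambda_j,\mathbf{V}\rangle>0$, since then some single index has a strictly positive derivative. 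I expect this sign bookkeeping, rather than any genuine analytic difficulty, to be the only delicate spot, the substantive work having already been done in Theorem \ref{prop:metz1756}.
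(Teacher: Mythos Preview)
Your proof is exactly the paper's: set $a=\mathbf{V}\cdot\nu$, apply Theorem~\ref{prop:metz1756} with $\int_{\partial\Omega}a\,d\Hn=0$ to obtain $J_\infty(a)\le 0$, and read this back through the identity $J_\infty(a)=-\sup_{j\ge 1}\lambda_j^{-1}\langle d\lambda_j(\Omega),\mathbf{V}\rangle$.

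One comment on the point you yourself single out as delicate. The passage from $\sup_j \lambda_j^{-1}c_j\ge 0$ to $\sup_j c_j\ge 0$ is \emph{not} justified by the sign argument you give: positivity of the weights preserves the sign of each term, but not the sign of a supremum that may be attained only in the limit. For instance $c_j=-1$ and $\lambda_j=j$ give weighted supremum $0$ yet unweighted supremum $-1$. The paper makes the very same jump without comment; what is rigorously established by Theorem~\ref{prop:metz1756} is the weighted inequality $\sup_{j\ge 1}\lambda_j^{-1}\langle d\lambda_j(\Omega),\mathbf{V}\rangle\ge 0$, which already carries the intended interpretation that one cannot drive all first-order variations below a fixed negative level.
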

Indeed, for a vector field $\mathbf{V}$ as in the statement of Corollary \ref{corSO}, let us set $a=\mathbf{V}\cdot \nu$ and consider the problem
\begin{equation}\label{pbbbVnu}
\sup \bigg\{J_\infty(\mathbf{V}\cdot \nu )\ \ \mid\ \ \mathbf{V}\in W^{3,\infty}(\R^d,\R^d), \ \ \mathbf{V}\cdot \nu \in L^\infty(\partial\Omega)
\text{ and }\int_{\partial \Omega}\mathbf{V}\cdot \nu = 0\bigg\}.
\end{equation}
According to Theorem \ref{prop:metz1756}, there holds $J_\infty(\mathbf{V}\cdot \nu)\leq 0$ for every admissible vector field $\mathbf{V}$. Then, the optimal value of Problem \eqref{pbbbVnu} is non-positive which rewrites
$
-\sup_{j\geq 1}\langle d\lambda_j(\Omega),\mathbf{V}\rangle\leq 0.
$

\begin{remark}
In some sense, Problem \eqref{mainPbOpt} can be related to the large family of extremal spectral problems in shape optimization theory, where one looks for a domain minimizing or maximizing a numerical functions depending either on the eigenvalues or the eigenfunctions of an elliptic operator with various boundary conditions and geometric constraints (involving for instance the volume, perimeter or diameter of the domain). One of the most famous problems within this family is to minimize the first eigenvalue of the Dirichlet Laplacian operator among open subsets of $\R^n$ having a prescribed Lebesgue measure $c>0$. According to the so-called Faber-Krahn inequality, the solution is known to be the ball of volume $c$. 
For a review of such problems, we refer for instance to \cite{MR2150214,MR2731611,MR2251558,MR3681143,MR1804683}. 
\end{remark}

\subsection{Interpretation of our results in observation theory}\label{append:random}

The problem of optimizing the number, the position or the shape of sensors in order to improve the estimation of the state of the system has been widely investigated, in particular in the engineering community, with applications, for instance, to structural acoustics, piezoelectric actuators, or vibration control in mechanical structures. 
The literature on optimal observation is abundant in engineering applications, where mainly the optimal location of sensors or controllers is investigated (see, e.g., \cite{Afifi} for boundary actuators, \cite{Darde} in the context of electrical impedance tomography), but not the optimization of their shape.
In \cite{armaoua,harris,morris,vandewouwer}, numerical tools have been developed to solve a simplified version of the optimal design problem where either the partial differential equation has been replaced with a discrete approximation, or the class of optimal designs is replaced with a compact finite-dimensional set.

The problem of optimizing the shape of the sensors, without any restriction on their complexity or regularity, is infinite-dimensional and has been only little considered.
In \cite{henrot_hebrardSCL,henrot_hebrardSICON}, the authors investigated the problem of determining the best possible shape and position of the support of a damping term in the 1D wave equation, and they highlighted the so-called {\it spillover phenomenon} arising when considering spectral approximations. 
Their approach was spectral, and was based on Fourier expansions of the solution, as we do in the following.

Let $T>0$. We consider the homogeneous wave equation with Dirichlet boundary condition
\begin{equation}\label{waveEqobs}
\begin{array}{ll}
\partial_{tt} y (t,x)-\Delta y(t,x)=0 &  (t,x)\in [0,T]\times\Omega,\\
y(t,x)=0 &  (t,x)\in [0,T]\times\partial\Omega,\\
\end{array}
\end{equation}
It is well known that, for all $(y^0,y^1)\in H_0^1(\Omega,\C)\times L^2(\Omega,\C)$, there exists a unique solution $y\in C^0([0,T],H_0^1(\Omega,\C))\cap C^1((0,T),L^2(\Omega,\C))$ of \eqref{waveEqobs} such that $y(0,x)=y^0(x)$ and $\partial_t y (0,x)=y^1(x)$ for every $x\in\Omega$.

For a given measurable subset $\Gamma\subset\partial\Omega$, we consider the observable variable $z_\Gamma$ defined by 
\begin{equation}\label{obswave}
\forall (t,x)\in [0,T]\times \partial\Omega , \quad z_\Gamma(t,x) = \chi_\Gamma(x) \frac{\partial y}{\partial\nu}(t,x).
\end{equation}

By definition, the observability constant $C_T(\Gamma)$ is the largest nonnegative constant $C$ such that
\begin{equation}\label{ineqobs}
C \Vert (y(0,\cdot),\partial_ty(0,\cdot))\Vert_{H_0^1(\Omega,\C)\times L^2(\Omega,\C)}^2
\leq \int_0^T\int_{\partial\Omega}\chi_\Gamma (x) \left|\frac{\partial y}{\partial\nu}(t,x)\right|^2 \,d\Hn \,dt,
\end{equation}
for any solution $y$ of \eqref{waveEqobs}. It may be equal to $0$. If $C_T(\Gamma)>0$ then the system \eqref{waveEqobs}-\eqref{obswave} is said to be {\it observable\footnote{It is well known that observability holds true in large time if $\Gamma=\{x\in\partial\Omega\ \mid\  \langle x-x^0, \nu(x)\rangle>0\}$ for some $x^0\in \Omega$ (proof by multipliers, see \cite{ho,morawetz}). 
Within the class of $\mathcal{C}^\infty$ domains, observability holds true if $(\Gamma,T)$ satisfies the \textit{Geometric Control Condition (GCC)} (see \cite{BLR}), and this sufficient condition is almost necessary.
We refer to  \cite{TW, Zuazua} for an overview of boundary observability results for wave-like equations.
} in time $T$}.

From the point of view of applications, $\Gamma \subset \partial\Omega$ represents the domain occupied by some sensors that have been put at the boundary of the domain.
The role of the sensors is to achieve some measurements over a time horizon $[0,T]$, with which one aims at reconstructing the whole state of the system over $[0,T]$. Here, the partial measurement is $z_\Gamma(t,x) = \chi_\Gamma(x) \frac{\partial y}{\partial\nu}(t,x)$, given by \eqref{obswave}, and the complete state if the solution $y$ of \eqref{waveEqobs}. Since the solution of the wave equation is determined by its initial data, the observability inequality \eqref{ineqobs} ensures that the inverse problem of reconstructing the whole state from its partial measurement is well-posed, if $C_T(\Gamma)>0$.

Interpreting $C_T(\Gamma)$ as a quantitative measure of the well-posed character of the aforementioned inverse problem, one could be led to model the optimal shape of sensors issue as the problem of maximizing $C_T(\Gamma)$.
Nevertheless, this constant is \textit{deterministic} and provides an account for the \textit{worst possible case}. Hence, in this sense, it is a \textit{pessimistic} quantity. In practice, when realizing a large number of measures, it may be expected that this worst case does not occur so often, and then we realize that it is more desirable to have a notion of optimal observation in average, for a large number of experiments. For this reason, we adopt the point of view developed in \cite{PTZobsND, PTZparab} and inspired from the works \cite{Burq,BurqTzvetkov1}, which consists of maximizing what is referred to in these works as {\em the randomized observability constant}. This quantity can be interpreted as an average of the worst observation $L^2$-norms over almost all initial data.

A spectral expansion of the solution $y$ of Equation \eqref{waveEqobs} leads to the following expression of $C_T(\Gamma)$, namely
\begin{equation*}
\begin{split}
C_T(\Gamma) 
&= \inf_{\substack{(\widetilde a_j),(\widetilde b_j)\in\ell^2(\mathbb{C})\\ \sum_{j=1}^{+\infty} (| \widetilde a_j|^2+|\widetilde b_j|^2)=1}}\int_0^T\int_{\Gamma}\left|\sum_{j=1}^{+\infty}\left(\frac{\widetilde a_j}{\sqrt{\lambda_j}}e^{i\sqrt{\lambda_j}t}+\frac{\widetilde b_j}{\sqrt{\lambda_j}} e^{-i\sqrt{\lambda_j}t}\right)\frac{\partial \phi_j}{\partial \nu}(t,x)\right|^2\, d\Hn \, dt.
\end{split}
\end{equation*}

Making a random selection of all possible initial data for the wave equation \eqref{waveEqobs} consists in replacing $C_T(\Gamma)$ with the so-called {\em random observability constant} defined by
\begin{equation}\label{CTrand}
C_{T,\textrm{rand}}(\Gamma)= \hspace{-0.5cm} \inf_{\substack{(\widetilde a_j),(\widetilde b_j)\in\ell^2(\mathbb{C})\\ \sum_{j=1}^{+\infty}(|\widetilde a_j|^2+|\widetilde b_j|^2)=1}}  \hspace{-0.2cm} \mathbb{E}\left(\int_0^T  \int_\Gamma\left| \sum_{j=1}^{+\infty}\left(\frac{\beta_{1,j}^\omega \widetilde a_j}{\sqrt{\lambda_j}} e^{i\lambda_jt}+\frac{\beta_{2,j}^\omega \widetilde b_j}{\sqrt{\lambda_j}} e^{-i\lambda_jt}\right)\frac{\partial \phi_j}{\partial \nu}(x)  \right|^2 \hspace{-0.1cm} d\Hn \, dt\right),
\end{equation}
where $(\beta_{1,j}^\omega)_{j\in\mathbb{N}^*}$ and $(\beta_{2,j}^\omega)_{j\in\mathbb{N}^*}$ are two sequences of independent random variables on a probability space $(A,\mathcal{A},\mathbb{P})$ having mean equal to zero, variance equal to 1 and a super-exponential decay (for instance, independent Bernoulli random variables, see \cite{Burq, BurqTzvetkov1} for more details). Here, $\mathbb{E}$ is the expectation in the probability space, and runs over all possible events $\omega$.

An obvious remark is that, for any problem consisting of optimizing the observation, the optimal solution consists of observing the solutions over the whole domain $\partial\Omega$. This is however clearly not reasonable nor relevant and in practice the domain covered by sensors is limited, due for instance to cost considerations. From the mathematical point of view, we model this basic limitation by considering as the set of unknowns, the set 
$$
\mathcal{V}_{L,M}=\left\{ \chi_\Gamma\ \vert\ \Gamma\ \subset \partial\Omega \ \textrm{ and}\ \Hn(\Gamma)=L\Hn(\partial \Omega )\right\}
$$
for some $L\in [0,1]$, and therefore, it is relevant to model the problem of determining the optimal shape and location of boundary sensors as
$$
\sup\{C_{T,\textrm{rand}}(\Gamma), \ \chi_\Gamma\in\mathcal{V}_{L,M} \},
$$
which is very close to the optimal design problem \eqref{mainPbOpt} with $M=1$, as stated in the next result.

\begin{proposition}\label{propHazardCst}
Let $\Gamma\subset\partial\Omega$ be measurable. We have
\begin{equation}
C_{T,\textrm{rand}}(\Gamma)=T\inf_{j\in\mathbb{N}^*} \frac{1}{\lambda_j}\int_{\Gamma}\left(\frac{\partial \phi_j}{\partial \nu}(x)\right)^2 \, d\Hn .
\end{equation}
\end{proposition}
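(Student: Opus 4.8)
The plan is to compute the expectation in the definition \eqref{CTrand} explicitly: expanding the squared modulus produces a double sum over mode indices, and the statistical independence of the random coefficients collapses it to a diagonal sum, after which passing to the infimum is elementary.

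First I would fix admissible sequences $(\widetilde a_j),(\widetilde b_j)\in\ell^2(\mathbb{C})$ with $\sum_{j}(|\widetilde a_j|^2+|\widetilde b_j|^2)=1$, and expand
$$
\left|\sum_{j=1}^{+\infty}\Big(\tfrac{\beta_{1,j}^\omega \widetilde a_j}{\sqrt{\lambda_j}} e^{i\lambda_jt}+\tfrac{\beta_{2,j}^\omega \widetilde b_j}{\sqrt{\lambda_j}} e^{-i\lambda_jt}\Big)\tfrac{\partial \phi_j}{\partial \nu}(x)\right|^2=\sum_{j,k}c_j^\omega(t,x)\,\overline{c_k^\omega(t,x)},
$$
where $c_j^\omega$ denotes the $j$-th summand, using that the eigenfunctions are real-valued so that $\overline{\partial\phi_k/\partial\nu}=\partial\phi_k/\partial\nu$. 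I would then take the expectation term by term: since $(\beta_{1,j}^\omega)_j$ and $(\beta_{2,j}^\omega)_j$ are independent, centered and of unit variance, one has $\mathbb{E}(\beta_{1,j}^\omega\overline{\beta_{1,k}^\omega})=\mathbb{E}(\beta_{2,j}^\omega\overline{\beta_{2,k}^\omega})=\delta_{jk}$ and $\mathbb{E}(\beta_{1,j}^\omega\overline{\beta_{2,k}^\omega})=0$. Consequently all mixed terms and all off-diagonal ($j\neq k$) terms vanish, and — crucially — the oscillatory factors $e^{\pm i(\lambda_j\pm\lambda_k)t}$ disappear for the surviving diagonal terms, where the exponentials reduce to $1$. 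It is the randomization, not a time-averaging argument, that diagonalizes the quadratic form. After interchanging $\mathbb{E}$ with the integrals over $t\in[0,T]$ and $x\in\Gamma$, this yields
$$
\mathbb{E}\left(\int_0^T\int_\Gamma\Big|\textstyle\sum_j(\cdots)\Big|^2\,d\Hn\,dt\right)=T\sum_{j=1}^{+\infty}\big(|\widetilde a_j|^2+|\widetilde b_j|^2\big)\,\frac{1}{\lambda_j}\int_\Gamma\Big(\frac{\partial\phi_j}{\partial\nu}(x)\Big)^2\,d\Hn .
$$

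Finally, writing $p_j=|\widetilde a_j|^2+|\widetilde b_j|^2\geq 0$ (so that $\sum_j p_j=1$) and $\gamma_j=\tfrac{1}{\lambda_j}\int_\Gamma(\partial\phi_j/\partial\nu)^2\,d\Hn\geq 0$, the infimum defining $C_{T,\textrm{rand}}(\Gamma)$ becomes $T\,\inf\{\sum_j p_j\gamma_j : p_j\geq0,\ \sum_j p_j=1\}$, and this equals $T\inf_j\gamma_j$: the lower bound follows from $\sum_j p_j\gamma_j\geq(\inf_k\gamma_k)\sum_j p_j=\inf_k\gamma_k$, and the upper bound by concentrating all the mass on a near-minimizing index. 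This gives exactly the claimed identity. The only delicate point, and the main technical obstacle, is the rigorous justification of the termwise expectation and of the interchange of $\mathbb{E}$ with the $(t,x)$-integral for the infinite double sum; this is handled by Tonelli's theorem (the integrand being nonnegative), together with the $\ell^2$-summability of $(\widetilde a_j),(\widetilde b_j)$ and the membership $\partial\phi_j/\partial\nu\in L^2(\partial\Omega)$, which ensure that the series defining the Fourier expansion converges in the relevant $L^2$ spaces so that the second-moment computation is licit.
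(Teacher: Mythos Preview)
Your argument is correct and is exactly the standard computation: expand the square, use the independence, zero mean and unit variance of the $\beta_{i,j}^\omega$ to kill all cross terms, integrate the surviving diagonal in $(t,x)$ to produce the factor $T$, and then recognise the resulting infimum over $\ell^2$-normalized coefficients as a convex combination of the $\gamma_j$'s. The paper does not include a proof of this proposition (it is stated and immediately used, the computation being regarded as routine and carried out in the cited references \cite{PTZobsND,PTZparab}); your write-up matches that standard derivation, including the care you take with Fubini/Tonelli to justify exchanging $\mathbb{E}$, the sums and the integrals.
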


We claim that the approach developed within this article can be adapted without difficulty to admissible sets of characteristic functions. Moreover, up to slight changes in their formulation, all the conclusions of this article for Problems \eqref{truncPbOpt}, \eqref{defJrelax} and \eqref{mainPbOpt} remain valid.

\begin{remark}
In \cite{PTZCont1,PTZObs1,PTZparab,PTZobsND}, where {\em internal} subsets were considered to be optimized, a closely related optimal design problem has been modeled, consisting of maximizing the infimum over all modes of $\int_\omega\phi_j(x)^2\, dx$. The study required assumptions on the asymptotics of $\phi_j^2$, and led to  {\em quantum ergodicity properties}, i.e., asymptotic properties of the probability measures $\phi_j(x)^2\, dx$. 
\end{remark}

%%%%%%%%%%%%%%%%%%%%%%%%%%%%%%%%%%%%%%%%%%%%

\subsection{Final comments and perspectives}\label{sec:final}
The study developed in this article can be extended in several directions. 
\begin{itemize}
\item {\bf Determination of the maximizers for Problem \eqref{defJrelax} when $L>L^c_{M,n}$.} In such a case, we know that there does not exist any admissible Rellich function (see Definition \ref{def:Rellichadmf}), in other words there does not exist $x_0\in \overline{\Omega}$ such that $\tilde a_{x_0}$ belongs to $\Ub$. According to the analysis performed in Section \ref{Invest} in the case where $\Omega$ is a two-dimensional rectangle, as well as the numerical computations on ellipses plotted on Fig. \ref{fig1}, we conjecture that a maximizer in such a case is given by
$$
a^*=\Pi (\tilde a_{x_0}),
$$
where $x_0\in \overline{\Omega}$ and $\Pi$ denotes the truncation mapping defined by $\Pi(x)=x$ whenever $0\leq x\leq M$ and $\Pi(x)=M$ if $x>M$.
\item {\bf Other boundary conditions.} 
The optimal design problem investigated in this article involves the Dirichlet-Laplacian eigenfunctions. Considering for instance the second motivations mentioned in the introduction of this article, namely the optimal location of sensors issues, it would also be relevant to investigate an optimal design problem involving now either Neumann or Robin boundary conditions in \eqref{waveEqobs}.

In such a case, defining the observable variable $z_\Gamma$ by \eqref{obswave} is no longer possible. In the case of Neumann boundary conditions, one has to consider the observable variable
$$ z_{\Gamma} (t,x) =\chi_\Gamma (x) \left( \partial_t y(t,x)+ \nabla y(t,x) \right)$$
where $y$ is the solution of the wave equation with Neumann boundary conditions\footnote{More precisely, $y$ solves
$$
\left\lbrace 
\begin{array}{ll}
\partial_{tt}y(t,x) - \Delta y(t,x) = 0 & \forall (t,x)\in(0,T)\times \Omega, \\
\partial_\nu y(t,x) = 0 & \forall (t,x)\in(0,T)\times \partial\Omega, \\
y(0,x)=y^0(x), \;  \partial_t y(0,x) = y^1(x) & \forall x\in \Omega, \\ 
\end{array} \right.
$$
with $(y^0,y^1)\in H^1(\Omega,\C)\times L^2(\Omega,\C)$.}.

The observability inequality that must be considered here writes: for all $ (y_0,y_1)$ in $H^1(\Omega,\C)\times L^2(\Omega,\C) $,
$$ 
C_T(\Sigma) \Vert (y_0,y_1) \Vert^2_{H^1(\Omega,\C)\times L^2(\Omega,\C)} \leq  \int_0^T \int_{\partial\Omega} \chi_{\Gamma}(x) (\partial_t y^2(t,x)+ |\nabla y(t,x)|^2)d\Hn dt.
$$

Using a \textit{randomization procedure} leads to introduce the optimal design problem
$$ 
\sup_{v\in \Ub} \inf_{j\in\N} \int_{\partial\Omega} v\left(\phi_j^2 + \frac{1}{\lambda_j}|\nabla \phi_j|^2 \right) d\Hn .
$$
where the sequence $(\phi_j)_{j\in \N}$ now denotes a Hilbert basis of eigenfunctions of the Neumann-Laplacian operator.

Under appropriate quantum ergodic assumptions on the domain $\Omega$ (see \cite{BurqErgo,HasselErgo}), the sequence $ \left( (\phi_j^2 + \frac{1}{\lambda_j}\nabla \phi_j^2) d\Hn \right)_{j\geq 1}$ converges vaguely to the uniform measure $ \frac{2}{n |\Omega|} d\Hn$ and we have moreover the following identity of Rellich type (see \eqref{Tao})
$$
\int_{\partial\Omega} \langle x,\nu(x) \rangle \left(\phi_j(x)^2 +\frac{|\nabla \phi_j(x)|^2}{\lambda_j} \right)d\Hn(x) = 2 ,
$$
holding for every bounded set $\Omega$ of $\R^n$ either convex or having a $C^2$ boundary.
As a consequence, mimicking the reasoning made in the case of Dirichlet boundary conditions, one infers that the optimal value provided in Theorem \ref{OptimalValue} still holds true when considering Neumann boundary conditions. 

Notice that it is also possible to generalize our results to the case of Robin boundary conditions. One then get analogous versions of the main theorems \ref{No-Gap} and \ref{analytics} in such cases.

\item {\bf Generalization of Theorem \ref{analytics}.} We think plausible that the result stated in Theorem \ref{analytics} holds in fact true for all domains connected bounded domain $\Omega$ convex or with a $\mathcal{C}^{1,1}$ boundary, except the disk. In some aspects, such an issue appears close to the famous problems in shape optimization entitled ''Schiffer conjecture'' or ''Pompeiu's problem'' (see, e.g., \cite{henrot-pierre}).

Investigating such an issue needs another approach and tools as the ones developed within this article. 

\item {\bf Optimal boundary control domain for the wave equation.}
Investigating the optimal domain for boundary observability is related to the issue of investigating the optimal domain for boundary control. Indeed, introduce the boundary control problem
\begin{equation}\label{ContPb}
\left\lbrace \begin{array}{ll}
\partial_{tt} w(t,x)-\Delta w(t,x)=0 & \forall (t,x)\in [0,T]\times\Omega, \\
w(t,x) = 0 & \forall (t,x)\in (\partial\Omega\setminus \Gamma)\times [0,T], \\
w(t,x) =u(t,x) & \forall (t,x)\in\Gamma\times [0,T] ,\\
w(0,x)= w^0(x), \; \partial_t w(0,x) = w^1(x), & \forall x\in\Omega ,\\
\end{array} \right.
\end{equation}
where the control $u$ belongs to $ L^2([0,T]\times\Gamma,\C)$. 
The Cauchy problem \eqref{ContPb} is well posed for every initial data $(w^0,w^1) \in H^1_0(\Omega,\C)\times L^2(\Omega,\C)$ and every control $u\in L^2([0,T]\times\Gamma,\C)$.
By duality, one has that {\em System \eqref{ContPb} is controllable in time $T$ if and only if the observation problem \eqref{waveEqobs}-\eqref{obswave} is observable in time $T$} (see \cite{TW}). 

Moreover, if Problem \eqref{ContPb} is exactly controllable, then applying the so-called Hilbert Uniqueness Method (HUM, see \cite{Lions1,Lions2}), an optimal control is given by 
$$
 u_\Gamma (t,x)=\chi_\Gamma (x) y(t,x) 
 $$
where $y$ denotes the solution of \eqref{waveEqobs} with initial conditions $(y^0,y^1)$ minimizing the functional\footnote{Here, the notation $\langle \cdot,\cdot \rangle_{H^{-1},H_0^1}$ stands for the standard duality bracket in $H^{-1}$ and $\langle \cdot,\cdot \rangle_{L^2}$ for the usual inner product in $L^2$.}
$$ \mathcal{F}_\Gamma (y^0,y^1)=\frac{1}{2} \int_0^T \int_\Gamma \left( \frac{\partial y}{\partial \nu}\right)^2 d\Hn dt - \langle y^1,w^0\rangle_{H^{-1},H_0^1} + \langle y^0,w^1\rangle_{L^2} $$
over $H^1_0(\Omega,\C)\times L^2(\Omega,\C)$.
Let us define the so-called HUM operator 
\begin{equation}\label{HUMoperator}
\begin{array}{lccl}
\Lambda_\Gamma : & H^1_0(\Omega,\C) \times L^2(\Omega,\C) & \rightarrow & L^2([0,T]\times\Omega,\C)\\ 
 & (w^0,w^1) & \mapsto & u_\Gamma \\
 \end{array}.
 \end{equation}
It is relevant to look for an optimal control domain minimizing the norm operator of $\Lambda_\Gamma$ over all possible domains $\Gamma\subset \partial\Omega$. 

As pointed out in \cite{MR3632257}, a randomization modeling approach is still relevant in that case. Nevertheless, the resulting randomized control problem is much more intricate than Problem \eqref{mainPbOpt} and its analysis is widely open.
\end{itemize}

%%%%%%%%%%%%%%%%%%%%%%%%%%%%%%%%%%%%%%%%%%%%%%%%%%%%%%%%%%%%%%%%%%

\bigskip

%%%%%%%%%%%%%%%%%%%%%%%%%%%%%%%%%%%%%%%%%%%%%%%%%%%%%%%%%%%%%%%%%%
%%%%%%%%%%%%%%%%%%%%%%%%%%%%%%%%%%%%%%%%%%%%%%%%%%%%%%%%%%%%%%%%%%
\appendix

\begin{center}
\fbox{\Large{\bf Appendix - Proofs of Propositions \ref{prop:rect}, \ref{mainDisk} and \ref{prop:angsectRes}}}\label{sec:proofCaspart}
\end{center}\

In what follows, we will assume that $M=1$ for the sake of readability. The general result will be easily inferred by an immediate adaptation of the reasonings.

\section{Proof of Proposition \ref{prop:rect}}\label{sec:proofCaspart1}

This proof is inspired by \cite[Proposition 1 \& Theorem 1]{PTZObs1}. For this reason, we only provide a short sketch of proof, underlining the main steps.

Let us first solve the convexified optimal design problem \eqref{defJrelax}. According to the expression of $J_\infty(a)$ given by \eqref{CritSquare}, letting $n$ and $k$ going to $+\infty$ yields
\begin{equation}\label{Tensor}
 \max_{a\in\Ub}J_\infty(a) \leq  \frac{2}{\pi^2\alpha\beta} \max_{a\in\Ub} \min\left( \int_{\Sigma_1\cup\Sigma_3}a(x,y)dy,2L\pi(\alpha+\beta)-\int_{\Sigma_1\cup\Sigma_3}a(x,y)dy\right) ,
\end{equation}
by using that $\int_{\partial\Omega}a(x,y)\, d\Hn=2L\pi(\alpha+\beta)$ and as a consequence 
$$
 \max_{a\in\Ub}J_\infty(a) \leq \frac{2}{\pi^2\alpha\beta}\max_{t\in [0,L\pi (\alpha+\beta)]}\min  \{t,2L\pi (\alpha+\beta)-t\}=\frac{2L (\alpha+\beta)}{\pi\alpha\beta}.
 $$
Observe moreover that both components of the minimum above are equal at the optimum.
To compute the optimal value for the convexified problem, we will use Theorem \ref{OptimalValue}. 
Let us investigate the existence of Rellich-admissible functions. A simple computation shows that every Rellich-admissible function, whenever it exists, is necessarily constant on each side of $\Omega$, namely on $\Sigma_1$, $\Sigma_2$, $\Sigma_3$ and $\Sigma_4$. Moreover, for a given $x_0$ in $\Omega$ and when $x$ runs over $\partial\Omega$, the quantity $\langle x-x_0,\nu\rangle$ is successively equal to the distance of $x_0$ to each side of $\Omega$. Therefore, it is enough to investigate the case where $x_0=(0,0)$ to determine the set of parameters $L$, $\alpha$ and $\beta$ for which there exists Rellich-admissible functions. In other words, this question comes to determine $L$, $\alpha$ and $\beta$ such that the function $a^*$ defined by
$$ 
a^*|_{\Sigma_1\cup\Sigma_3} = L\frac{\alpha+\beta}{2\beta}\quad \text{and } \quad a^*|_{\Sigma_2\cup\Sigma_4}=L\frac{\alpha+\beta}{2\alpha}, 
$$
belongs to $\overline{\mathcal{U}}_{L,1}$.

It follows that there exists a Rellich-admissible function if, and only if $|L|\leq L^c_{n}$ (defined in the statement of Prop. \ref{prop:rect}).
Moreover, in this case, one has
\begin{equation}
\max_{a\in\Ub}J_\infty(a)=\frac{2 L(\alpha+\beta)}{\pi\alpha\beta}.
\end{equation}

Let us now investigate the converse case. Assume without loss of generality that $\beta<\alpha$ and $L\in (\frac{2\beta }{\alpha+\beta},1]$, the case $L\in [-1,-\frac{2\beta }{\alpha+\beta})$ being treatable in a similar way. Then, one has
\begin{eqnarray*}
\int_{\Sigma_1\cup\Sigma_3}a(x,y)\, dy& \leq &\operatorname{Per}(\Sigma_1\cup\Sigma_3)=2\beta \pi\\
\textnormal{and}\quad \int_{\Sigma_2\cup\Sigma_4}a(x,y)\, dx & = & 2L\pi(\alpha+\beta)-\int_{\Sigma_1\cup\Sigma_3}a(x,y)\, dy \geq 4\beta  \pi-2\beta \pi=2\beta \pi,
\end{eqnarray*}
meaning that $J_\infty(a)\leq \frac{4}{\pi\alpha}$
for every $a\in \overline{\mathcal{U}}_{L,1}$, according to \eqref{Tensor}. The right-hand side in this inequality is in fact reached by every function $a$ equal to 1 on $\Sigma_1\cup\Sigma_3$, constant on each side $\Sigma_2$ and $\Sigma_3$, where each constant is chosen in such a way that $a$ belongs to $\overline{\mathcal{U}}_{L,1}$. Notice that such a choice is not unique, and easy computations yield that maximizers are given by
$$
a_{\mid \Sigma_1\cup\Sigma_3}=1,\quad a_{\mid  \Sigma_2}=u\frac{L(\alpha+\beta)-\beta}{\alpha}\quad\textrm{and}\quad a_{\mid  \Sigma_4}=(2-u)\frac{L(\alpha+\beta)-\beta}{\alpha}.
 $$
with $u\in \left(1-\left( \frac{\alpha}{L(\alpha+\beta)-\beta}-1\right),1+\left(\frac{\alpha}{L(\alpha+\beta)-\beta}-1\right)\right)$.

The rest of the proof is a direct adaptation of the results of \cite[Theorem 1]{PTZObs1} and in particular of the fact that 
\begin{equation}\label{eq2233}
\sup_{\substack{\omega\subset (-\alpha\pi/2,\alpha\pi/2)\\ |\omega|=V_0}} \int_\omega \sin\left(\frac{n\pi x}{\alpha}\right)^2\, dx = \frac{V_0}{2}.
\end{equation}

Finally, the necessary and sufficient condition on $L$ guaranteeing the existence of solutions for the initial optimal design problem follows by using the same Fourier series method as in the proof of \cite[Theorem 1]{PTZObs1}.

\section{Proof of Proposition \ref{mainDisk}}\label{proof:propmainDisk}

The proof of this proposition is inspired by \cite[Theorem 3.2]{henrot_hebrardSCL} and \cite[Theorem 1]{PTZObs1}. 
First, notice that for every $a$ in $\Ub$, one has
$$J_\infty(a)\leq \lim_{n\to +\infty}\int_0^{2\pi}a(\theta)\cos(n\theta)^2\, d\theta=L\pi$$
and that $J_\infty(L)=L\pi$, yielding that the optimal value for Problem \eqref{defJrelax} is $\pi L$.
Moreover, the constant function equal to $L$ belongs to $\Ub$ since $M\geq 1$ and $a\in\Ub$ reaches $\pi L$ if, and only if 
$$
\inf_{n\geq 1} \int_0^{2\pi}a(\theta)\cos(n\theta)^2 R d\theta  = \inf_{n\geq 1} \int_0^{2\pi}a(\theta)\sin(n\theta)^2 R d\theta =  \pi L
$$
or similarly 
\begin{equation}\label{munich0640}
\forall n\in \N^*, \qquad \int_0^{2\pi}a(\theta)\cos(2n\theta) d\theta=0 .
\end{equation}
Considering the Fourier expansion $ a(\theta)=L+\sum_{j=1}^{+\infty} ( \alpha_j \cos(j\theta) + \beta_j \sin(j\theta) )$, the equality \eqref{munich0640} holds if, and only if $\alpha_{2j} = 0$, $\forall j\geq 1$.

The no-gap property between the optimal values for Problem \eqref{mainPbOpt} and its convexified version \eqref{defJrelax} is a consequence of Theorem \ref{No-Gap}.

Let us now investigate the no-gap property. Assume that $a=2\chi_{\Gamma}-1$ solves \eqref{mainPbOpt} and consider $a_e:\theta\mapsto \frac{a(\theta)+a(2\pi-\theta)}{2}$, its even part. First, $a_e$ is still a solution of \eqref{defJrelax} and $ a_e(\theta) = L + \sum_{j=1}^{+\infty} \alpha_j \cos(j \theta) $.
Setting now $\tilde{a}(\theta) = \frac{a_e(\theta)+a_e(\pi-\theta)}{2}$, one has
$$ \tilde{a}(\theta) = L + \sum_{j=1}^{+\infty} \frac{\alpha_j}{2}(1+(-1)^j)\cos(j \theta) .$$
Since $\tilde{a}$ solves \eqref{defJrelax}, we have $\alpha_{2j}=0$, for all $j\geq 1$. Thus, $\tilde{a}$ is necessarily constant equal to $L$.
Finally since $a\in \mathcal{U}_{L,M} $, hence the range of $a_e$ is contained in $\lbrace -1, 0, 1\rbrace$, and finally $\tilde{a}$ in $\lbrace -1,-1/2,0,1/2,1  \rbrace $. This yields that Problem  \eqref{mainPbOpt}  has no solution if $L\notin \lbrace -1,-1/2,0,1/2,1  \rbrace$.

Conversely, if $L\in \lbrace -1,-1/2,0,1/2,1  \rbrace$, a direct adaptation of the construction of solutions done in the proof of \cite[Theorem 1]{PTZObs1} or \cite[Lemma 3.1]{henrot_hebrardSCL}  yields the expected conclusion.

\section{Proof of Proposition \ref{prop:angsectRes}}\label{proof:propSectAng}
Denote by $\Sigma_1$, $ \Sigma_2$ and $\Sigma_3$ the subsets of $\partial\Omega$ defined by 
$$
\Sigma_1=\partial\Omega\cap \lbrace \theta=-\theta_1\rbrace, \quad
\Sigma_2=\partial\Omega\cap \lbrace \theta=\theta_1\rbrace\quad\textrm{ and }\quad
\Sigma_3=\partial\Omega\cap \lbrace r=R\rbrace .
$$

Fixing $n\in\N^*$ and letting $k$ tend to $+\infty$ shows that the sequence $(\frac{1}{\lambda_{n,k}} \left( \frac{\partial \varphi_{n,k}}{\partial \nu}  \right)^2)_{(n,k)\in\N^{*2}}$ does not converge to a constant on $\partial\Omega$. Therefore, $\Omega$ does not satisfy the (QUEB) property.

We do not know if $\Omega$ satisfies the (WQEB) property. Anyway, we bypass this difficulty by showing a weaker version of this property. This is the purpose of the next two lemmas.

The results stated in the two next lemmas are inspired by \cite[Proposition 4]{PTZobsND}.

\begin{lemma}\label{Lem7} 
For $s\in (0,1)$, let us denote by $F_s$ the function $[s,1]\ni x\mapsto \int_s^x \frac{du}{u^2\sqrt{u^2-s^2}}$. We have
$$ 
\sup_{a\in \mathcal{V}} \;  \inf_{s\in (0,1)} \; \frac{1}{F_s(1)}\int_0^1  \frac{a(u) \chi_{(s,1)}(u)}{u^2 \sqrt{u^2-s^2}} du  = 1
$$  
where $\mathcal{V}= \lbrace a\in L^\infty([0,1])\ \mid\ \int_0^1 a(u)\, du = 1 \rbrace$.
\end{lemma}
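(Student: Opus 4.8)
The plan is to read the central quantity as a weighted average. For fixed $s\in(0,1)$ set
\[
G(a,s):=\frac{1}{F_s(1)}\int_0^1 \frac{a(u)\chi_{(s,1)}(u)}{u^2\sqrt{u^2-s^2}}\,du=\frac{1}{F_s(1)}\int_s^1 \frac{a(u)}{u^2\sqrt{u^2-s^2}}\,du .
\]
Since $F_s(1)=\int_s^1 \frac{du}{u^2\sqrt{u^2-s^2}}$ is exactly the total mass of the weight $w_s(u)=\frac{1}{u^2\sqrt{u^2-s^2}}$ on $(s,1)$, the number $G(a,s)$ is the average of $a$ against the probability measure $\frac{w_s(u)}{F_s(1)}\,du$ supported on $(s,1)$. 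The lower bound is then immediate: the constant function $a\equiv 1$ belongs to $\mathcal V$ and gives $G(1,s)=1$ for every $s\in(0,1)$, hence $\inf_{s}G(1,s)=1$ and the supremum is $\ge 1$.

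For the reverse inequality it suffices to show $\inf_{s}G(a,s)\le 1$ for every $a\in\mathcal V$. Fix such an $a$ and put $c=\inf_{s\in(0,1)}G(a,s)$, which is finite because $|G(a,s)|\le\|a\|_{L^\infty}$. By definition of $c$ and positivity of $F_s(1)$, one has
\[
\int_s^1 \frac{a(u)-c}{u^2\sqrt{u^2-s^2}}\,du\ \ge\ 0\qquad\text{for all } s\in(0,1),
\]
so the whole matter is reduced to proving that $\int_0^1\bigl(a(u)-c\bigr)\,du=1-c\ge 0$.

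The key step is an Abel-type inversion: I would multiply the last inequality by a nonnegative weight $\rho(s)$, integrate over $s\in(0,1)$, and swap the order of integration, choosing $\rho$ so that the $s$-integration reconstructs the plain Lebesgue integral of $a-c$. The exchange of integrals is legitimate by Tonelli's theorem, since with the choice $\rho(s)=\tfrac{4}{\pi}s^2$ the double integral of the absolute value reduces to $\int_0^1|a(u)-c|\,du<\infty$ (here one uses $a\in L^\infty$ together with the local integrability of $(u^2-s^2)^{-1/2}$ near $u=s$). This yields
\[
\int_0^1 \rho(s)\left(\int_s^1 \frac{a(u)-c}{u^2\sqrt{u^2-s^2}}\,du\right)ds=\int_0^1 \frac{a(u)-c}{u^2}\left(\int_0^u \frac{\rho(s)}{\sqrt{u^2-s^2}}\,ds\right)du .
\]
The weight $\rho(s)=\tfrac{4}{\pi}s^2$ is selected precisely so that the inner integral equals $u^2$: the substitution $s=u\sin\phi$ gives $\int_0^u \frac{s^2}{\sqrt{u^2-s^2}}\,ds=u^2\int_0^{\pi/2}\sin^2\phi\,d\phi=\tfrac{\pi}{4}u^2$. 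Consequently the right-hand side collapses to $\int_0^1\bigl(a(u)-c\bigr)\,du=1-c$, whereas the left-hand side is nonnegative, being an integral of a nonnegative integrand. Hence $1-c\ge 0$, that is $c\le 1$, which gives the upper bound and therefore the announced equality.

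The only genuinely delicate point is discovering the correct inversion weight $\rho$. Once the constraint is recognized as the Abel integral equation $\int_0^u \rho(s)\,(u^2-s^2)^{-1/2}\,ds=u^2$, the monomial ansatz $\rho(s)=\lambda s^2$ solves it explicitly with $\lambda=4/\pi$, and everything else is routine verification. A secondary, but necessary, technical check is the integrability guaranteeing both the finiteness of $c$ and the applicability of Tonelli, which follows directly from $a\in L^\infty([0,1])$.
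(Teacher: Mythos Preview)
Your proof is correct. The core device---multiplying the family of inequalities by a nonnegative weight in $s$ and integrating so that the Abel-type inner integral collapses to the plain Lebesgue measure---is exactly the mechanism the paper uses. The paper, however, frames the argument variationally: it observes that $K(a)=\inf_s G(a,s)$ is concave, notes $K(1)=1$, and then checks the first-order optimality condition $dK(1)\cdot h\le 0$ for perturbations $h$ with $\int_0^1 h=0$, invoking Danskin's theorem to compute the directional derivative and concluding by contradiction. Your route is more elementary: by working directly with $a-c$ where $c=\inf_s G(a,s)$, you bypass Danskin's theorem, the concavity observation, and the contradiction packaging, reducing everything to one explicit Fubini computation. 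The paper's framing generalizes more readily to settings where the optimizer is not known in advance, but in this concrete case your direct argument is shorter and self-contained. (As a minor aside, the weight the paper writes in its displayed computation appears to contain a typo; your choice $\rho(s)=\tfrac{4}{\pi}s^2$ is the one that actually makes the inversion exact.)
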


\begin{proof}
Define the function $K: \Ub \ni a \mapsto \inf_{s\in (0,1)} \; \frac{1}{F_s(1)}\int_0^1  \frac{a(u) \chi_{(s,1)}(u)}{u^2 \sqrt{u^2-s^2}} \, du $. Note that $ K(a=1) = 1$ by definition of $F_s$ and that the infimum in the definition of $K$ is reached for every $s\in (0,1)$. As an infimum of linear functions, the function $K$ is concave. Therefore, to prove the lemma, it  is enough to show that the directional derivative of $K$ at $a=1$ in every admissible direction $h$ satisfies the first-order necessary optimality conditions, namely that
$dK(a=1).h \leq 0$ for every function $h$ defined on $[0,1]$ such that $\int_0^1 h(u)du =0$. 

According to Danskin's theorem\footnote{There is however a small difficulty here in applying Danskin's Theorem, due to the fact that the set $[0,1)$ is not compact. This difficulty is easily overcome by applying the slightly more general version \cite[Theorem D2]{bernhard} of Danskin's Theorem, noting that for $a=1$ every $s\in(0,1)$ realizes the infimum in the definition of $K$.}, we have
$$ 
dK(a=1).h  = \inf_{s\in(0,1)} \frac{1}{F_s(1)}\int_0^1  \frac{h(u) \chi_{(s,1)}(u)}{u^2 \sqrt{u^2-s^2}} du.
$$
By contradiction assume that there exists a function $h$ such that
$$ \frac{1}{F_s(1)}\int_0^1  \frac{h(u) \chi_{(s,1)}(u)}{u^2 \sqrt{u^2-s^2}} du > 0 ,$$
for every $s\in (0,1)$.
One has
$$
0 < \int_{s=0}^1 \int_{u=s}^1 \frac{s \; h(u)}{u \sqrt{u^2-s^2}} \, du \, ds= \int_{u=0}^1 \int_{s=0}^u \frac{s \; h(u)}{u \sqrt{u^2-s^2}} \, ds \, du = - \int_0^1 h(x) dx = 0,
$$
leading to a contradiction. The conclusion follows.
\end{proof}

\begin{lemma}\label{weakWQEB}
For every $a\in\Ub$, one has
$ J_\infty(a) \leq \frac{L \Hn (\partial\Omega)}{|\Omega|}.$
\end{lemma}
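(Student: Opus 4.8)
The plan is to bound $J_\infty(a)$ from above by passing to the limit along carefully chosen families of modes, thereby reducing the infimum over $(n,k)$ to a one-parameter infimum controlled by Lemma \ref{Lem7}. Write the criterion as $J_\infty(a)=\frac{2}{R^2\theta_1}\inf_{(n,k)\in\N^{*2}}(T_n+S_{n,k})$, where $T_n=\int_{-\theta_1}^{\theta_1}a(R,\theta)\sin^2(\frac{n\pi}{2\theta_1}(\theta+\theta_1))R\,d\theta$ collects the contribution of the circular arc, and $S_{n,k}=\frac{R^2n^2\pi^2}{4z_{n,k}^2\theta_1^2}\int_0^R\frac{J_{\kappa_n}(z_{n,k}r/R)^2}{|J_{\kappa_n}'(z_{n,k})|^2}\frac{a(r,-\theta_1)+a(r,\theta_1)}{r^2}\,dr$, with $\kappa_n=\frac{\pi n}{2\theta_1}$, collects the contribution of the two straight sides. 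Set $p=\int_{-\theta_1}^{\theta_1}a(R,\theta)R\,d\theta$ and $q=\int_0^R(a(r,-\theta_1)+a(r,\theta_1))\,dr$, so that $a\in\Ub$ reads $p+q=2(1+\theta_1)RL$; since $\Hn(\partial\Omega)=2(1+\theta_1)R$ and $|\Omega|=\theta_1R^2$, the target is exactly $\frac{L\Hn(\partial\Omega)}{|\Omega|}=\frac{p+q}{R^2\theta_1}$. As $J_\infty(a)$ is an infimum, every admissible limit of $(T_n+S_{n,k})$ along a subsequence furnishes an upper bound for $J_\infty(a)$.

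First I would treat the arc term: letting $n\to+\infty$ and using $\sin^2=\frac12(1-\cos)$ together with the Riemann--Lebesgue lemma gives $T_n\to\frac12 p$. For the radial term, the substitution $r=Ru$ yields $S_{n,k}=\frac{R\kappa_n^2}{z_{n,k}^2}\int_0^1\frac{J_{\kappa_n}(z_{n,k}u)^2}{|J_{\kappa_n}'(z_{n,k})|^2}\frac{\psi(u)}{u^2}\,du$, where $\psi(u)=a(Ru,-\theta_1)+a(Ru,\theta_1)$, so $\int_0^1\psi=q/R$. For a fixed target $s\in(0,1)$ I choose $k=k(n)$ with $\kappa_n/z_{n,k(n)}\to s$; this is possible because for fixed $n$ the zeros $z_{n,k}$ sweep out $(\kappa_n,+\infty)$ with gaps tending to $\pi$. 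The uniform (WKB) asymptotics of Bessel functions across the turning point $u=s=\kappa_n/z_{n,k}$, together with the normalization $\int_0^1 J_{\kappa_n}(z_{n,k}u)^2\,u\,du=\frac12 J_{\kappa_n}'(z_{n,k})^2$, give $\frac{J_{\kappa_n}(z_{n,k}u)^2}{J_{\kappa_n}'(z_{n,k})^2}\to\frac{1}{2\sqrt{1-s^2}\,\sqrt{u^2-s^2}}\,\chi_{(s,1)}(u)$ weakly. Since $\kappa_n^2/z_{n,k}^2\to s^2$, this gives $S_{n,k(n)}\to\frac{Rs^2}{2\sqrt{1-s^2}}\int_s^1\frac{\psi(u)}{u^2\sqrt{u^2-s^2}}\,du$. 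A direct computation gives $F_s(1)=\frac{\sqrt{1-s^2}}{s^2}$, so this limit is precisely $\frac{R}{2}\cdot\frac{1}{F_s(1)}\int_0^1\frac{\psi(u)\chi_{(s,1)}(u)}{u^2\sqrt{u^2-s^2}}\,du$, the quantity appearing in Lemma \ref{Lem7}.

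Combining, $J_\infty(a)\le\frac{2}{R^2\theta_1}(\frac12 p+\frac{R}{2}\,\Phi_s(\psi))$ for every $s\in(0,1)$, where $\Phi_s(\psi)=\frac{1}{F_s(1)}\int_0^1\frac{\psi\chi_{(s,1)}}{u^2\sqrt{u^2-s^2}}\,du$; taking the infimum over $s$ leaves $\Phi(\psi):=\inf_{s\in(0,1)}\Phi_s(\psi)$. The functional $\Phi$ is concave, positively homogeneous, and covariant under constants, $\Phi(\psi+c)=\Phi(\psi)+c$ (since $\Phi_s(c)=c$ for every $s$). From these properties and Lemma \ref{Lem7}, which asserts $\sup\{\Phi(\psi):\int_0^1\psi=1\}=1$, one deduces $\Phi(\psi)\le\int_0^1\psi$ for every $\psi\in L^\infty([0,1])$: writing $P=\int_0^1\psi$, the shift $\psi-P$ has zero mean, and for every $\varepsilon>0$ one has $\Phi(\psi-P)+\varepsilon=\Phi(\psi-P+\varepsilon)=\varepsilon\,\Phi(\varepsilon^{-1}(\psi-P+\varepsilon))\le\varepsilon$ by Lemma \ref{Lem7} (the argument having mean one), whence $\Phi(\psi-P)\le0$ and thus $\Phi(\psi)=\Phi(\psi-P)+P\le P$. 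With $P=q/R$ this yields $J_\infty(a)\le\frac{2}{R^2\theta_1}(\frac12 p+\frac{R}{2}\cdot\frac{q}{R})=\frac{p+q}{R^2\theta_1}=\frac{L\Hn(\partial\Omega)}{|\Omega|}$, as claimed.

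The main obstacle is the rigorous justification of the weak limit of the Bessel densities $\frac{J_{\kappa_n}(z_{n,k}\,\cdot\,)^2}{J_{\kappa_n}'(z_{n,k})^2}\,\frac{du}{u^2}$. This requires uniform asymptotics valid simultaneously in the oscillatory region $u>s$, across the turning point $u\approx s$, and in the exponentially small region $u<s$ (the last being essential to tame the non-integrable weight $1/u^2$ at the origin), and one must moreover pass from continuous test functions to the merely $L^\infty$ density $\psi/u^2$ by an approximation argument. This is precisely the point where one adapts the analysis of \cite[Proposition 4]{PTZobsND}; everything else is bookkeeping together with the elementary convex-analytic consequence of Lemma \ref{Lem7} used above.
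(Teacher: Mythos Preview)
Your proposal is correct and follows essentially the same route as the paper: split the boundary integral into the circular arc (handled by Riemann--Lebesgue) and the straight sides (handled by taking $n\to\infty$ with $\kappa_n/z_{n,k}\to s$ and invoking the weak limits of the Bessel densities toward the measures $f_s$), then appeal to Lemma~\ref{Lem7}. The only cosmetic difference is that the paper concludes by a contradiction argument (``if $\Phi_s(\rho)>\int\rho$ for every $s$, Lemma~\ref{Lem7} fails''), whereas you extract the inequality $\Phi(\psi)\le\int_0^1\psi$ directly from Lemma~\ref{Lem7} via the shift-and-scale trick using concavity, positive homogeneity, and translation covariance of $\Phi$; your version is arguably cleaner. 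As you rightly flag, the substantive work lies in the uniform Bessel asymptotics justifying the weak limit, which both proofs defer to \cite[Proposition~4]{PTZobsND}.
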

\begin{proof}
Let $a\in\Ub$. We will show the existence of a subsequence of $\left( \int_{\partial\Omega} \frac{a(x)}{\lambda_{n,k}} \left( \frac{\partial \varphi_{n,k}}{\partial \nu} \right)^2 \, d\Hn   \right)_{(n,k)\in \N^{*2}}$
converging to $\frac{1}{\theta_1 R^2} \int_{\partial\Omega} a(x)\, d\Hn=\frac{L \Hn (\partial\Omega)}{|\Omega|}$. 

Since $\frac{1}{\lambda_{n,k}} \left( \frac{\partial \varphi_{n,k}}{\partial \nu}  \right)^2 = \frac{2}{R^2 \theta_1} \sin \left(\frac{n\pi}{2\theta_1} (\theta+ \theta_1)\right)$ on $\Sigma_3$, the sequence $\left(\int_{\Sigma_3} \frac{a(x)}{\lambda_{n,k}}\left( \frac{\partial \varphi_{n,k}}{\partial \nu} \right)^2 d\Hn \right)_{n\in\N^*}$ converges to $\frac{1}{\theta_1 R^2} \int_{\Sigma_3} a(x)\, d\Hn $, according to the Riemann-Lebesgue Lemma.

To conclude the proof, it is enough to prove the existence of a positive number $\alpha$ such that, by keeping the ratio $n/k$ constant equal to $\alpha$, there holds 
$$ 
\lim_{n \to +\infty} \int_{\Sigma_1 \cup \Sigma_2} \frac{a(x)}{\lambda_{n,k}}\left( \frac{\partial \varphi_{n,k}}{\partial \nu} \right)^2 d\Hn  \leq \frac{1}{\theta_1 R^2} \int_{\Sigma_1\cup\Sigma_2} a(x)\, d\Hn .
 $$

Setting $\mu = \frac{n\pi}{2\theta_1}$ and introducing $\Phi_{n,k}: [0,1]\ni u\mapsto \frac{ n^2 \pi^2}{2 z_{n,k}^2 \theta_1^2} \left(\frac{J_\mu\left( z_{n,k}\frac{r}{R}\right)}{r J'_\mu(z_{n,k})} \right)^2$, one has 
$$ 
\frac{1}{\lambda_{n,k}}\left( \frac{\partial \varphi_{n,k}(r)}{\partial \nu} \right)^2 = \frac{1}{\theta_1}\Phi_{n,k}\left(\frac{r}{R}\right)\quad \textrm{ on }\quad \Sigma_1 \cup \Sigma_2.
$$
Hence, we have to prove that for every $\rho\in L^\infty([-1,1],[-1,1])$, there exists $\alpha>0$ such that for $n$ and $k$ chosen as previously,
\begin{equation}\label{ObjectifAngular} 
\lim_{n \to +\infty} \int_0^1 \rho (u) \Phi_{n,k}(u) \, du \leq \int_0^1 \rho (u) \, du .
\end{equation}
According to \cite[p. 257]{Luke}, one has
$ \int_0^1 \Phi_{n,k}(u)\, du =   \frac{n^2 \pi^2}{n^2\pi^2 - \theta_1}$.
Then, taking the weak limit of the sequence of measures $\Phi_{n,k}(u)\, du$ with a fixed ratio $n/k$, and making this ratio vary, we obtain the family of probability measures
$$ 
f_s(u)\, du =  \frac{1}{F_s(1)}\frac{\chi_{(s,1)}(u)}{u^2 \sqrt{u^2-s^2}} \, du$$
parametrized by $s\in (0,1)$.

To prove \eqref{ObjectifAngular}, let us argue by contradiction, by assuming that for every $s \in (0,1)$, one has 
$$ \frac{1}{F_s(1)}\int_0^1  \frac{\rho (u) \chi_{(s,1)}(u)}{u^2 \sqrt{u^2-s^2}} \, du > \int_0^1 \rho(u)\, du .$$
We obtain a contradiction by using Lemma \ref{Lem7} and the expected conclusion follows.
\end{proof}

In the next lemma, one computes the critical value $L^c_{n}$ introduced in Theorem \ref{OptimalValue}.
\begin{lemma}\label{lemma:Lcangsect}
Denote by $L^c_{n}$ the critical value for the constraint parameter $L$, as introduced in Theorem \ref{OptimalValue}. One has $L^c_{n} = \frac{\theta_1 (1+\tan \theta_1)}{(1+\theta_1)\tan \theta_1}$.
\end{lemma}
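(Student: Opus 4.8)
The plan is to evaluate directly the formula for the critical value furnished by Theorem \ref{OptimalValue}, namely
$$
L^c_{n} = \min\left\{1,\ \frac{2|\Omega|}{\Hn(\partial\Omega)\,\inf_{x_0\in\R^2}\ell_{\partial\Omega}(x_0)}\right\},\qquad \ell_{\partial\Omega}(x_0)=\underset{x\in\partial\Omega}{\operatorname{sup\ ess}}\,\left|\langle x-x_0,\nu(x)\rangle\right|,
$$
with $n=2$. The two elementary geometric quantities are immediate: the sector has area $|\Omega|=\theta_1 R^2$, while its boundary consists of the two radii $\Sigma_1,\Sigma_2$ (each of length $R$) and the arc $\Sigma_3$ (of length $2\theta_1 R$), so that $\Hn(\partial\Omega)=2R(1+\theta_1)$ and $2|\Omega|/\Hn(\partial\Omega)=\theta_1 R/(1+\theta_1)$. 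The whole difficulty is then concentrated in computing $\inf_{x_0}\ell_{\partial\Omega}(x_0)$.

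First I would compute $x\mapsto\langle x-x_0,\nu(x)\rangle$ explicitly on each stratum for a general $x_0=(a,b)$. Parametrizing $\Sigma_3$ by $x=(R\cos\theta,R\sin\theta)$ with outward normal $\nu=(\cos\theta,\sin\theta)$ gives $\langle x-x_0,\nu\rangle=R-a\cos\theta-b\sin\theta$. On $\Sigma_2=\{\theta=\theta_1\}$, with outward normal $(-\sin\theta_1,\cos\theta_1)$, the terms involving the running parameter cancel and one obtains the \emph{constant} value $a\sin\theta_1-b\cos\theta_1$, and symmetrically $\Sigma_1$ yields $a\sin\theta_1+b\cos\theta_1$. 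Using $\max\{|p-q|,|p+q|\}=|p|+|q|$ for the two radii, this gives
$$
\ell_{\partial\Omega}(a,b)=\max\left\{|a|\sin\theta_1+|b|\cos\theta_1,\ \sup_{\theta\in[-\theta_1,\theta_1]}\left|R-a\cos\theta-b\sin\theta\right|\right\}.
$$

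Next I would reduce to $x_0$ on the symmetry axis. As a supremum of maps $x_0\mapsto|\langle x-x_0,\nu\rangle|$, each being the absolute value of an affine function and hence convex, the function $\ell_{\partial\Omega}$ is convex; and the reflection symmetry $(x,y)\mapsto(x,-y)$ of $\Omega$ gives $\ell_{\partial\Omega}(a,b)=\ell_{\partial\Omega}(a,-b)$, so Jensen's inequality yields $\ell_{\partial\Omega}(a,0)\leq\ell_{\partial\Omega}(a,b)$. It therefore suffices to minimize over $b=0$. For $0<a<R$ one has $\sup_\theta|R-a\cos\theta|=R-a\cos\theta_1$ (attained at $\theta=\pm\theta_1$, since $R-a\cos\theta>0$ increases as $\cos\theta$ decreases), whence $\ell_{\partial\Omega}(a,0)=\max\{a\sin\theta_1,\ R-a\cos\theta_1\}$. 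The first term increases and the second decreases in $a$, so the minimum occurs at the balancing value $a^\ast=R/(\sin\theta_1+\cos\theta_1)\in(0,R)$, giving
$$
\inf_{x_0\in\R^2}\ell_{\partial\Omega}(x_0)=\frac{R\sin\theta_1}{\sin\theta_1+\cos\theta_1}=\frac{R\tan\theta_1}{1+\tan\theta_1}.
$$

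Finally I would substitute into the formula for $L^c_{n}$, obtaining
$$
\frac{2|\Omega|}{\Hn(\partial\Omega)\,\inf_{x_0}\ell_{\partial\Omega}(x_0)}=\frac{\theta_1 R/(1+\theta_1)}{R\tan\theta_1/(1+\tan\theta_1)}=\frac{\theta_1(1+\tan\theta_1)}{(1+\theta_1)\tan\theta_1},
$$
and check that the truncation at $1$ is vacuous: the inequality $\theta_1(1+\tan\theta_1)\leq(1+\theta_1)\tan\theta_1$ is equivalent to $\theta_1\leq\tan\theta_1$, which holds for every $\theta_1\in(0,\pi/2)$ and in particular on $(0,\pi/4]$. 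This yields $L^c_{n}=\frac{\theta_1(1+\tan\theta_1)}{(1+\theta_1)\tan\theta_1}$, as claimed. The main obstacle is the evaluation of $\inf_{x_0}\ell_{\partial\Omega}$: one must observe that the essential supremum ignores the two corners (an $\Hn$-null set where $\nu$ is undefined), exploit the constancy of $\langle x-x_0,\nu\rangle$ on the radii, and justify the convexity-and-symmetry reduction to the axis before carrying out the one-parameter balancing.
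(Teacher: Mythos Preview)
Your proof is correct and follows essentially the same approach as the paper's: compute $\langle x-x_0,\nu(x)\rangle$ explicitly on each of the three strata, reduce by symmetry to $x_0$ on the bisector, and then balance the radius contribution against the arc contribution to find the optimal $x_0$. The paper works in polar coordinates for $x_0$ and simply invokes ``symmetry arguments'' for the reduction, whereas you use Cartesian coordinates and give the explicit convexity-plus-Jensen justification; you also verify that the truncation at $1$ is vacuous (equivalent to $\theta_1\leq\tan\theta_1$), which the paper leaves to the reader. These are cosmetic differences only.
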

\begin{proof}
Following the proof of Theorem \ref{OptimalValue}, the critical value $L^c_{n}$ is given by \eqref{md11h30}. Therefore, the issue comes to determine the quantity
$ \delta:=\min_{x_0\in\overline{\Omega}} \max_{x\in\partial\Omega} \langle x-x_0, \nu_x \rangle$. For the sake of simplicity, the notations we will use are summed-up on  Figure \ref{croquis2}. Writing in polar coordinates $x_0=(r_0,\theta_0)$, the quantity $\langle x-x_0, \nu_x \rangle$ is equal to $r_0\sin(\theta_1+\theta_0)$ on $\Sigma_1$, $r_0\sin(\theta_1-\theta_0)$ on $\Sigma_2$ and $R-r_0 \cos(\theta_0-\theta)$ on $\Sigma_3$, where we denoted $x=(R,\theta)$ in polar coordinates on $\Sigma_3$.
 
\begin{figure}[h]
\begin{center}
\begin{tabular}{cc}
\begin{tikzpicture}[scale=0.7]
\coordinate (O) at (0,0);
\coordinate (A) at (4*0.7071,4*0.7071);
\coordinate (B) at (4*0.7071,-4*0.7071);
\coordinate (O2) at (1,0);
\coordinate (B2) at (1*0.7071,-1*0.7071);
\coordinate (A2) at (1*0.7071,1*0.7071);
\coordinate (x0) at (3*0.866,3*0.5);
\draw (1,-2) node {$\Sigma_1$};
\draw (1,2) node {$\Sigma_2$};
\draw (4.7,1) node {$\Sigma_3$};
\draw (O) node [below left] {$O$};
\draw [->] (-0.5,0)--++(5,0);
\draw (4.5,0) node [right] {$x$};
\draw [->](0,-3) --++ (0,6);
\draw (0,3) node [above] {$y$};
\draw [thick] (O) -- (A);
\draw [thick] (O) -- (B);
\draw [thick] (B) arc (-45:45:4);
\draw [<-](B2) arc (-45:0:1);
\draw [<-] (A2) arc (45:0:1);
\draw (B2) node [above] {$\theta_1$};
\draw (A2) node [below] {$\theta_1$};
\draw [->] (O)--(x0);
\draw (x0) node [right] {$x_0$};
\draw [->] (1.5,0) arc (0:30:1.5);
\draw (2,0.5) node {$\theta_0$};
\draw (2,1.5) node [below left] {$r_0$};
\draw [red, latex-latex] (2.9*0.7071,2.9*0.7071)--(x0);
\draw [red, latex-latex] (1*0.7071,-1*0.7071)--(x0);
\draw [red] (x0) node [above] {$d$};
\end{tikzpicture} & \begin{tikzpicture}[scale=0.7]
\coordinate (O) at (0,0);
\coordinate (A) at (4*0.7071,4*0.7071);
\coordinate (B) at (4*0.7071,-4*0.7071);
\coordinate (O2) at (1,0);
\coordinate (B2) at (1*0.7071,-1*0.7071);
\coordinate (A2) at (1*0.7071,1*0.7071);
\coordinate (x0) at (3*0.866,3*0.5);
\coordinate (x) at (4*0.99,-4*0.15);
\draw (1,-2) node {$\Sigma_1$};
\draw (1,2) node {$\Sigma_2$};
\draw (4.7,1) node {$\Sigma_3$};
\draw (O) node [below left] {$O$};
\draw [->] (-0.5,0)--++(5,0);
\draw (4.5,0) node [right] {$x$};
\draw [->](0,-3) --++ (0,6);
\draw (0,3) node [above] {$y$};
\draw [thick] (O) -- (A);
\draw [thick] (O) -- (B);
\draw [thick] (B) arc (-45:45:4);
\draw [->] (O)--(x0);
\draw (x0) node [right] {$x_0$};
\draw [->] (1.5,0) arc (0:30:1.5);
\draw [->] (1.6,0) arc (0:-9:1.6);
\draw (2,0.5) node {$\theta_0$};
\draw (2.2,-0.2) node  {$\theta$};
\draw (2.5,-0.5) node [below] {$R$};
\draw (2,1.5) node [below left] {$r_0$};
\draw (x) node {$\times$};
\draw (x) node [right] {$x$};
\draw [color=LimeGreen!100] (x)--++(4*0.15,4*0.99)--++(-7*0.15,-7*0.99);
\draw [thin, latex-latex](O)--(x);
\draw [color=LimeGreen!100] (x0)--++(3*0.866,3*0.5);
\draw [color=LimeGreen!100] (x0)--++(3*0.99,-3*0.15);
\draw [color=LimeGreen!100] (4.2*0.866,4.2*0.5) node [above] {$d_1$};
\draw [color=LimeGreen!100] (3*0.866+1.5*0.99,3*0.5-1.5*0.15) node [above left] {$d$};
\end{tikzpicture}\\
(a) & (b) \\
\end{tabular}
\end{center}
\caption{Notations}\label{croquis2}
\end{figure}
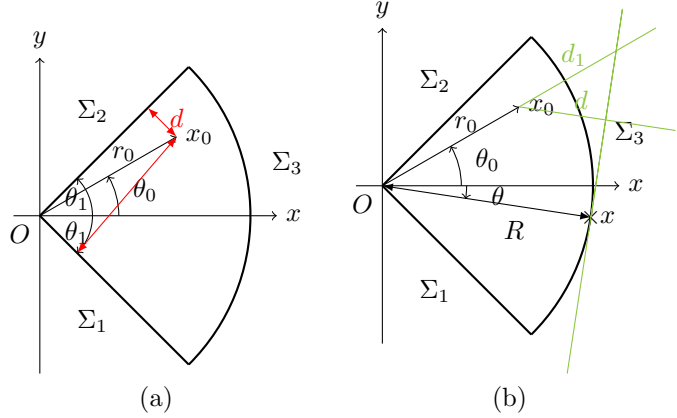

As a consequence, one computes using symmetry arguments
\begin{eqnarray*}
\delta &=& \min_{\substack{0\leq r_0\leq R\\ \theta_0\in [-\theta_1,\theta_1]}} \max \left\{ R-r_0\cos(\theta_0+\theta_1), R-r_0\cos(\theta_0-\theta_1), r_0 \sin(\theta_1-\theta_0),r_0 \sin(\theta_1+\theta_0) \right\}\\
& = & \min_{0\leq r_0\leq R} \max \left\{ R-r_0\cos \theta_1 , r_0 \sin \theta_1 \right\},
\end{eqnarray*}
and the maximum is reached provided that  $R-r_0\cos\theta_1= r_0 \sin \theta_1$. It follows that 
$ \delta= \frac{R\tan(\theta_1)}{1+\tan(\theta_1)}$, leading to the expected conclusion.
\end{proof}

In the following lemma, we compute the optimal value of Problem \eqref{defJrelax}. 
\begin{lemma}\label{NoBifurcationProp}
Let $L^c_{n} = \frac{\theta_1 (1+\tan \theta_1)}{(1+\theta_1)\tan \theta_1}$. We have
$$
\max_{a\in\Ub} J_\infty(a)=
\left\{\begin{array}{ll}
 \frac{L\Hn(\partial\Omega)}{|\Omega|} &  \text{ if }  |L|\leq L^c_{n} , \\
\frac{(L+L^c_{n})\Hn(\partial\Omega)}{2|\Omega|} & \text{ if } L^c_{n} < |L|.
\end{array}\right.
$$
\end{lemma}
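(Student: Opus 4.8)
The plan is to treat the two regimes $|L|\leq L^c_n$ and $|L|>L^c_n$ separately, the first being a direct corollary of the convex theory already developed and the second requiring the finer Bessel analysis packaged in the preceding lemmas. Throughout I keep $M=1$ and work in dimension two, writing $\Sigma_3$ for the arc $\{r=R\}$ and $\Sigma_1\cup\Sigma_2$ for the two straight sides.

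First I would dispose of the subcritical case. Since $2\theta_1\leq\pi/2<\pi$, the angular sector is convex, so Theorem \ref{OptimalValue} applies; combined with Lemma \ref{lemma:Lcangsect}, which identifies the critical value $L^c_n=\frac{\theta_1(1+\tan\theta_1)}{(1+\theta_1)\tan\theta_1}$ entering \eqref{md11h30}, it gives at once that for $|L|\leq L^c_n$ one has $\max_{a\in\Ub}J_\infty(a)=\frac{2L\Hn(\partial\Omega)}{2|\Omega|}=\frac{L\Hn(\partial\Omega)}{|\Omega|}$. The optimum is attained by the Rellich admissible function $\tilde a_{x_0}$ based at the minimizer $x_0=(r_0,0)\in\overline\Omega$ of Lemma \ref{lemma:Lcangsect}, and the matching upper bound is also furnished independently by Lemma \ref{weakWQEB}.

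The supercritical case is the substantial one. For the \emph{upper bound} I note first that Lemma \ref{weakWQEB} only yields $J_\infty(a)\leq\frac{L\Hn(\partial\Omega)}{|\Omega|}$, which strictly exceeds the claimed value, so the improvement must be extracted from the pointwise constraint $|a|\leq M$. Writing $m_3=\int_{\Sigma_3}a$ and $m_{12}=\int_{\Sigma_1\cup\Sigma_2}a$, one has $m_3+m_{12}=L\Hn(\partial\Omega)$ together with the capacity bound $|m_{12}|\leq\Hn(\Sigma_1\cup\Sigma_2)=2R$. The strategy is to send $n\to+\infty$ so that the arc weight $\sin^2\!\big(\tfrac{n\pi}{2\theta_1}(\theta+\theta_1)\big)$ averages to $\tfrac12$ by Riemann–Lebesgue, while letting the side weight $\Phi_{n,k}$ run through its vague limits $f_s$ exactly as in the proof of Lemma \ref{weakWQEB}, and then to optimize the resulting infimum over $s$ by Lemma \ref{Lem7}. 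Imposing the capacity constraint on $m_{12}$ against this family of limiting bounds reduces the problem to a scalar optimization whose value is $\frac{(L+L^c_n)\Hn(\partial\Omega)}{2|\Omega|}$; the factor $1/2$ should appear precisely because once the sides are saturated at $a=M$, any arc mass beyond the critical Rellich profile feeds every mode only through the averaged weight.

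For the \emph{lower bound} I would exhibit an admissible $a$ attaining the value, existence of a maximizer being guaranteed as in Lemma \ref{lem:existPinfty}. The natural candidate saturates $a\equiv M$ on $\Sigma_1\cup\Sigma_2$ and carries on $\Sigma_3$ the critical Rellich profile $\frac{M}{\delta}\langle x-x_0,\nu\rangle$, with $\delta=\inf_{x_0}\ell_{\partial\Omega}(x_0)=\frac{R\tan\theta_1}{1+\tan\theta_1}$ from Lemma \ref{lemma:Lcangsect}, plus a nonnegative excess $b$ distributing the remaining mass $(L-L^c_n)M\Hn(\partial\Omega)$ over the interior of the arc subject to $0\leq b\leq M-a^c$. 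The decisive simplification is that, by the Rellich identity \eqref{Tao}, the critical part contributes the \emph{same} constant $\frac{L^c_n\Hn(\partial\Omega)}{|\Omega|}$ to every mode $(n,k)$, so that $J_\infty$ collapses to $\frac{L^c_n\Hn(\partial\Omega)}{|\Omega|}+\frac{2}{R^2\theta_1}\inf_n\int_{\Sigma_3}b\,\sin^2(\cdots)$, and one is left to choose $b$ maximizing this infimum. The hard part will be the reconciliation of the two bounds with the exact constant: because $J_\infty$ is an infimum over \emph{all} modes, the low-order Bessel modes on the saturated sides can depress the infimum below the limiting value controlled by Lemmas \ref{Lem7}–\ref{weakWQEB}, so both estimates must be certified uniformly in $(n,k)$; in particular verifying that the vanishing of the arc weight at the corners $\theta=\pm\theta_1$ forces exactly the half-loss on the excess arc mass is the delicate calculation, and it is here that the quantitative Bessel estimates underlying Lemma \ref{Lem7} are indispensable.
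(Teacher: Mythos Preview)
Your treatment of the subcritical regime $|L|\leq L^c_n$ is fine and matches the paper's (the paper invokes Lemma~\ref{weakWQEB} and Lemma~\ref{lemma:Lcangsect} directly, you go through Theorem~\ref{OptimalValue}, but these are equivalent here since the sector is convex). Your lower bound in the supercritical case is also on the right track: the decomposition into the critical Rellich function $a_c$ plus an excess, together with the observation that Rellich makes $j_{n,k}(a_c)$ \emph{constant} in $(n,k)$, is exactly the paper's device. The paper simply takes the excess to be constant on $\Sigma_3$ rather than a general $b$.

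The genuine gap is in your \emph{upper} bound for $|L|>L^c_n$. Your proposed route---passing to the vague limits $f_s$ of the side weights and invoking Lemma~\ref{Lem7} together with the capacity constraint $|m_{12}|\leq 2R$---cannot produce the sharp value $\frac{(L+L^c_n)\Hn(\partial\Omega)}{2|\Omega|}$. Lemma~\ref{Lem7} only tells you that $\inf_s \int \rho\,f_s\leq \int\rho$, which is precisely what feeds Lemma~\ref{weakWQEB} and yields merely $J_\infty(a)\leq \frac{L\Hn(\partial\Omega)}{|\Omega|}$; the mass bound on $m_{12}$ adds nothing, since the limiting inequality already collapses arc and side contributions into the single integral $\int_{\partial\Omega}a$. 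In particular there is no ``scalar optimization'' at the end whose value is the claimed one.

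The fix is to use for the upper bound the \emph{same} decomposition you already use for the lower bound. Write $a=a_c+h$ with $a_c$ the critical Rellich function (so $a_c\equiv 1$ on $\Sigma_1\cup\Sigma_2$); the $L^\infty$ constraint forces $h\leq 0$ on the sides. Since the Bessel side weight is nonnegative, the side contribution of $a$ is bounded above, for \emph{every} mode $(n,k)$, by the side contribution of $a_c$, which by Rellich equals a known constant minus the arc contribution of $a_c$. This eliminates the Bessel integral uniformly in $(n,k)$---no limiting argument and no Lemma~\ref{Lem7} needed---and reduces the bound to $\frac{L^c_n\Hn(\partial\Omega)}{|\Omega|}$ plus an arc integral of $h$ against $\sin^2$, after which Riemann--Lebesgue finishes the job. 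Your closing remark that ``the quantitative Bessel estimates underlying Lemma~\ref{Lem7} are indispensable'' is therefore misleading: for this lemma they play no role beyond the subcritical bound already recorded in Lemma~\ref{weakWQEB}.
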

\begin{proof}
According to Lemma \ref{weakWQEB}, one has $ J_\infty(a) \leq \frac{L \Hn (\partial\Omega)}{|\Omega|}$ and according to Lemma \ref{lemma:Lcangsect}, the right-hand side in this inequality is reached by every Rellich-admissible function if $|L|\leq L^c_{n}$.
 
Assume now that $L>L^c_{n}$ (the case $L<-L^c_n$ being exactly similar).
Let us introduce $a_c$, as the solution of the optimal design problem \eqref{defJrelax} in the case where $L=L^c_{n}$. One verifies that $a_c=1$ on $\Sigma_1\cup \Sigma_2$ and $a_c = R-r_0\cos(\theta)$ on $\Sigma_3$. Let $a\in \Ub$ and let us decompose $a$ as $a=a_c+h$, so that $ \int_{-\theta_1}^{\theta_1}a(R,\theta) R d\theta = L\Hn(\partial\Omega) - 2R$, $ \int_{-\theta_1}^{\theta_1}h(R,\theta) R d\theta+\int_{\Sigma_1\cup \Sigma_2}h = (L-L^c_{n})\Hn(\partial\Omega)$ and $h <0$ on $\Sigma_1\cup \Sigma_2$.

Let us apply \eqref{Tao} with $L=L^c_{n}$. One gets 
$$ 
\frac{R^2n^2\pi^2}{2 z_{n,k}^2\theta_1^2}  \int_0^R \frac{J_{\pi n /2\theta_1}(z_{n,k}\frac{r}{R})^2}{|J_{\pi n/2\theta_1}'(z_{n,k})|^2} \frac{dr}{r^2} = L^c_{n}\Hn(\partial\Omega) - \int_{-\theta_1}^{\theta_1}a_c(R,\theta)\sin\left(\frac{n\pi}{2\theta_1}(\theta+\theta_1)\right)^2R d\theta .
$$
Using this identity, we claim that $J_\infty(a)=\inf_{(n,k)\in \N^{*2}}j_{n,k}(a)$, where  
\begin{eqnarray*}
j_{n,k}(a) &=& \frac{2}{R^2\theta_1}\int_{-\theta_1}^{\theta_1}a(R,\theta)\sin\left(\frac{n\pi}{2\theta_1}(\theta+\theta_1)\right)^2\, R d\theta \\
& &  + \frac{n^2\pi^2}{ z_{n,k}^2\theta_1^3}  \int_0^R (1+h(r,\theta_1)+h(r,-\theta_1))\frac{J_{\pi n /2\theta_1}(z_{n,k}\frac{r}{R})^2}{|J_{\pi n/2\theta_1}'(z_{n,k})|^2} \frac{dr}{r^2}\\
& \leq & \frac{2}{R^2\theta_1} \left( \int_{-\theta_1}^{\theta_1}h(R,\theta) \sin\left(\frac{n\pi}{2\theta_1}(\theta+\theta_1)\right)^2R d\theta    + L^c_{n}\Hn(\partial\Omega)  \right) \\
&=&\frac{(L+L^c_{n})\Hn(\partial\Omega)}{2|\Omega|} - \frac{1}{R^2\theta_1}  \int_{-\theta_1}^{\theta_1}h(R,\theta)\cos\left(\frac{n\pi}{\theta_1}(\theta+\theta_1) \right) \, R d\theta. 
\end{eqnarray*}
As a consequence and according to the Riemann-Lebesgue lemma, one has
\begin{eqnarray*}
J_\infty(a) & \leq & \frac{(L+L^c_{n})\Hn(\partial\Omega)}{2|\Omega|} - \frac{1}{R^2\theta_1}\sup_{n\in\N} \left\lbrace \int_{-\theta_1}^{\theta_1}h(R,\theta) \cos\left(\frac{n\pi}{\theta_1}(\theta+\theta_1) \right) \, R d\theta \right\rbrace \\
& \leq & \frac{(L+L^c_{n})\Hn(\partial\Omega)}{2|\Omega|}.
\end{eqnarray*}
Note that the right-hand side in the last inequality is reached by the function $a^*$ equal to 1 on $\Sigma_1\cup\Sigma_2$ and constant on $\Sigma_3$, where the constant is chosen so that $\int_{\partial\Omega}a^*\, d\Hn=L\Hn(\partial\Omega)$. We have then proved the expected result.
\end{proof}

It remains now to prove the no-gap result between the optimal values for Problem \eqref{mainPbOpt} and its convexified version \eqref{defJrelax} in the case where $L\geq L^c_{n}$.
Since every solution for the convexified problem \eqref{defJrelax} is equal to $1$ on $\Sigma_1\cup\Sigma_2$, it is enough to exhibit a sequence $(a_m)_{m\in \N}$ of elements in $\U$ such that $a_m =1$ on $\Sigma_1\cup\Sigma_2$ and
$$ \lim_{m\rightarrow +\infty} \inf_{(n,k)\in\N^{*2}} \int_{-\theta_1}^{\theta_1} a_m(R,\theta) \sin \left(\frac{n\pi(\theta+\theta_1)}{2\theta_1} \right)^2Rd\theta = L\Hn(\partial\Omega)-2R.
$$
One refers to the proof of \cite[Theorem 1]{PTZObs1} where the construction of such a sequence is explained.

%%%%%%%%%%%%%%%%%%%%%%%%%%%%%%%%%%%%%%%%%%%%%%%%%%%%%%%%%%%%%%%%%%%%%

%%%%%%%%%%%%%%%%%%%%%%%%%%%%%%%%%%%%%%%%%%%%%%%%%%%%%%%%%%%%%%%%%%

\paragraph{Acknowledgment.}
Y. Privat was partially supported by the Project ''Analysis and simulation of optimal shapes - application to lifesciences'' of the Paris City Hall.

%%%%%%%%%%%%%%%%%%%%%%%%%%%%%%%%%%%%%%%%%%%%%%%%%%%%%%%%%%%%%%%%%%
%%%%%%%%%%%%%%%%%%%%%%%%%%%%%%%%%%%%%%%%%%%%%%%%%%%%%%%%%%%%%%%%%%

\bibliographystyle{abbrv}
\bibliography{biblio}

\end{document}